\documentclass[leqno,11pt]{amsart}

\usepackage{amssymb,
			enumitem, %better than enumerate
			caption, %to have options like putting ruled line under figure captions
			array, %to have options for table row heights
			tikz, %for drawing
            xy %for a diagram with skew arrows
}

\xyoption{all}

%Formatting captions on figures
\DeclareCaptionFormat{myfigformat}{#1#2#3\hrulefill}
\DeclareCaptionFormat{mytabformat}{~ \\[10pt] #1#2#3\hrulefill}
\captionsetup[figure]{format=myfigformat}
\captionsetup[table]{format=mytabformat}

%toc depth
\setcounter{tocdepth}{1}

%hyperref

\usepackage[
	bookmarksnumbered,
	pdftitle={quantum dilog AA}
	pdfauthor={Justin Allman}
	]
	{hyperref}
	
%Page formatting
\usepackage{geometry}
\geometry{
	letterpaper,
	total={6.5in,9in},
	top=1in,
	left=1in,
	}
	
\makeatletter
\newcommand{\leqnomode}{\tagsleft@true}
\newcommand{\reqnomode}{\tagsleft@false}
\makeatother

%%%%%%%%%%%%%%%%%%%%%
%% Personal macros %%
%%%%%%%%%%%%%%%%%%%%%

%quiver/coha notations
\newcommand{\curly}{\mathcal}
\newcommand{\G}{\boldsymbol{\mathrm{G}}}
\newcommand{\M}{\boldsymbol{\mathrm{Rep}}}
\newcommand{\Mat}{\mathrm{Mat}}
\DeclareMathOperator{\Hom}{Hom}
\DeclareMathOperator{\Ext}{Ext}
\DeclareMathOperator{\dhom}{hom}
\DeclareMathOperator{\dext}{ext}
\DeclareMathOperator{\codim}{codim}
\newcommand{\Hor}{\mathrm{Hor}}
\newcommand{\Ver}{\mathrm{Ver}}
\newcommand{\GL}{\mathrm{GL}}
\newcommand{\U}{\mathrm{U}}
\newcommand{\kp}{\vdash}
\newcommand{\ddim}{\underline{\dim}}
\newcommand{\cqmod}{\C Q\mathtt{-mod}}

\newcommand{\w}{\mathrm{w}}
\newcommand{\dd}{\mathbf{down}}
\newcommand{\rr}{\mathbf{right}}
\newcommand{\supc}{\mathrm{sc}}
\newcommand{\coho}{\mathrm{H}}
\newcommand{\vip}{\mathbf{vip}}
\newcommand{\hip}{\mathbf{hip}}
\newcommand{\yy}{\mathbf{y}}
\newcommand{\cP}{\curly{P}}%Poincare series

%root sets
\newcommand{\hpr}{\mathtt{H}\Phi}
\newcommand{\vpr}{\mathtt{V}\Phi}

\newcommand{\hpri}[1]{\Phi(#1,\bullet)}
\newcommand{\vprj}[1]{\Phi(\bullet,#1)}

\newcommand{\hts}{\mathtt{HorT}}
\newcommand{\hhs}{\mathtt{HorH}}
\newcommand{\vts}{\mathtt{VerT}}
\newcommand{\vhs}{\mathtt{VerH}}
\newcommand{\rowQ}[1]{Q(#1,\bullet)}
\newcommand{\colQ}[1]{Q(\bullet,#1)}

\newcommand{\Neg}{\mathtt{Neg}}
\newcommand{\Pos}{\mathtt{Pos}}

%special bilinear form required for proof
\newcommand{\downform}[2]
	{\left\langle \begin{smallmatrix} #1 \\ \downarrow \\ #2 \end{smallmatrix} \right\rangle}
\newcommand{\upform}[2]{\left\langle \begin{smallmatrix} #1 \\ \uparrow \\ #2 \end{smallmatrix} \right\rangle}
\newcommand{\rightform}[2]{\langle #1 \rightarrow #2 \rangle}
\newcommand{\leftform}[2]{\langle #1 \leftarrow #2 \rangle}

%strata for A2 \square A2
\newcommand{\smallhstrata}[6]{
	\begin{smallmatrix}
		& #1	&	\\
	#2	&		&#3	\\
	#4	&		&#5	\\
		& #6	&	
	\end{smallmatrix}
	}
\newcommand{\smallvstrata}[6]{
	\begin{smallmatrix}
		& #2	& #4	&	\\
	#1	&		&		&#6	\\
		& #3	& #5	&
	\end{smallmatrix}
	}
\newcommand{\hstrata}[6]{
	\begin{array}{lcr}
		& #1	&	\\
	#2	&		&#3	\\
	#4	&		&#5	\\
		& #6	&	
	\end{array}
	}

%bold alphabets of variables
\newcommand{\bfs}[1]{\mathbf{#1}}

%Blackboard double bold shortcuts
\newcommand{\R}{\mathbb{R}}%real numbers
\newcommand{\Q}{\mathbb{Q}}%rational numbers
\newcommand{\Z}{\mathbb{Z}}%integers
\newcommand{\C}{\mathbb{C}}%complex numbers
\newcommand{\E}{\mathbb{E}}%dilog
\newcommand{\N}{\mathbb{N}}%natural numbers
\newcommand{\A}{\mathbb{A}}%quantum algebra

%% other new commands for operations

\newcommand{\union}{\cup}
\newcommand{\Union}{\bigcup}
\newcommand{\intersect}{\cap}

\newcommand{\dirsum}{\oplus}
\newcommand{\Dirsum}{\bigoplus}

\newcommand{\Tensor}{\bigotimes}
\newcommand{\compose}{\circ}
\newcommand{\til}{\widetilde}
\newcommand{\tr}{\mathrm{Tr}}
\newcommand{\iso}{\cong}
\newcommand{\homeo}{\approx}
\newcommand{\hmtpc}{\simeq}
\newcommand{\floor}[1]{\left\lfloor #1 \right\rfloor}

%text formatting for defining terms
\newcommand{\defin}[1]{\textbf{\texttt{#1}}}

%special hyphenations
\hyphenation{di-log-a-rithm}

%Theorem Styles:
%Plain
\theoremstyle{plain}
\newtheorem{prop}{Proposition}[section]
\newtheorem{lem}[prop]{Lemma}

\newtheorem{cor}[prop]{Corollary}
\newtheorem{thm}[prop]{Theorem}
\newtheorem*{thm*}{Theorem}

%Definition
\theoremstyle{definition}
\newtheorem{defn}[prop]{Definition}

%Remark
\theoremstyle{remark}
\newtheorem{remark}[prop]{Remark}
\newtheorem{example}[prop]{Example}

%titlepage stuff
\title[{q}-dilog identities for square products of {A}-type]{Quantum dilogarithm identities for the square product of {A}-type {D}ynkin quivers}

%author 1
\author[Allman]{Justin Allman}
\address{Department of Mathematics \\ US Naval Academy \\ Annapolis, MD}
\email{allman@usna.edu}

%author 2
\author[Rim{\'a}nyi]{Rich\'ard Rim\'anyi}
\address{Department of Mathematics, UNC--Chapel Hill \\ Chapel Hill, NC}
\email{rimanyi@email.unc.edu}

\subjclass[2010]{16G20,05E10}
\keywords{Quantum dilogarithm, Donaldson--Thomas invariant, quiver with potential, rapid decay cohomology}

\begin{document}

\maketitle

%%%%%%%%%%%%%%
%% ABSTRACT %%
%%%%%%%%%%%%%%

\begin{abstract}
The famous pentagon identity for quantum dilogarithms has a generalization for every Dynkin quiver, due to Reineke. A more advanced generalization is associated with a pair of alternating Dynkin quivers, due to Keller.
The description and proof of Keller's identities involves cluster algebras and cluster categories, and the statement of the identity is implicit. In this paper we describe Keller's identities explicitly, and prove them by a dimension counting argument. Namely, we consider quiver representations $\M_\gamma$ together with a superpotential function $W_\gamma$, and calculate the Betti numbers of the equivariant $W_\gamma$ rapid decay cohomology algebra of $\M_\gamma$ in two different ways corresponding to two natural stratifications of $\M_\gamma$. This approach is suggested by Kontsevich and Soibelman in relation with the Cohomological Hall Algebra of quivers, and the associated Donaldson--Thomas invariants.
\end{abstract}

\tableofcontents

%%%%%%%%%%%%%%%%%%%%%%%%%%
%% INTRODUCTORY SECTION %%
%%%%%%%%%%%%%%%%%%%%%%%%%%

\section{Introduction}
\label{s:intro}

Define $\cP_n=1/\prod_{i=1}^n(1-q^i)$. Write $(m_{10},m_{01},m_{11})\kp (\gamma_1,\gamma_2)$ if $m_{10}+m_{11}=\gamma_1$ and $m_{01}+m_{11}=\gamma_2$ (all non-negative integers). The remarkable identity
\begin{equation}\label{eqn:5term}
\cP_{\gamma_1}\cP_{\gamma_2}=
\sum_{(m_{10},m_{01},m_{11})\kp (\gamma_1,\gamma_2)}
q^{m_{10}m_{01}}
\cP_{m_{10}}\cP_{m_{01}}\cP_{m_{11}},
\end{equation}
named {\em pentagon identity}, has several interpretations in mathematics.

In combinatorics it is equivalent to the Durfee's square identity which is an effective way of counting partitions going back to at least Cauchy, see \cite{rraway2018} and references therein. In analysis (and number theory) it is a quantum version of the five-term identity for the dilogarithm function, see \cite{lfrk1994,dz1988,dz2007} and references therein. In geometry the common value of the two sides is called the Donaldson--Thomas (DT) invariant associated with the $A_2$ quiver, or it is interpreted as the simplest wall-crossing formula for DT-invariants, see \cite{mkys2014}. In topology, Equation \eqref{eqn:5term} is interpreted as two ways of counting the Betti numbers of the $GL_{\gamma_1}(\C)\times GL_{\gamma_2}(\C)$-equivariant cohomology of $\Hom(\C^{\gamma_1},\C^{\gamma_2})$---on the left hand side one uses the fact that $\Hom(\C^{\gamma_1},\C^{\gamma_2})$ is contractible, and on the right hand side one cuts the space $\Hom(\C^{\gamma_1},\C^{\gamma_2})$ into orbits, see \cite{mkys2011,mkys2014,rr2013}.

It is remarkable that the identity \eqref{eqn:5term} {\em for all $\gamma_1,\gamma_2$ together} can be encoded as a single identity
\begin{equation}\label{eqn:E5}
\E(y_{1})\E(y_{2})=\E(y_{2})\E(y_{12})\E(y_{1}),
\end{equation}
where $\E$ is an explicit power series called quantum dilogarithm series, and the $y$'s are certain non-commuting variables, see Section \ref{s:q.dilog.series}.

There are natural generalizations of the identities \eqref{eqn:5term}, \eqref{eqn:E5} to all Dynkin quivers, due to Reineke \cite{mr2010}, and their various interpretations mentioned above are well studied. The topic of this paper is a higher level of quantum dilogarithm identities which Keller found in relation to cluster categories \cite{bk2011,bk2013.fpsac}. These identities are parameterized by pairs of Dynkin diagrams of type A, D, E. Next we explain the simplest case of Keller's identities, the so-called $A_2\square A_2$ case.

From \eqref{eqn:5term} by formal manipulation (squaring) one obtains
\begin{multline}\label{eqn:55term}
\mathop{\sum_{(m_{10},m_{01},m_{11})\kp (\gamma_1,\gamma_2)}}_{(n_{10},n_{01},n_{11})\kp (\gamma_3,\gamma_4)}
q^{m_{10}m_{01}+n_{10}n_{01}}
\cP_{m_{10}}\cP_{m_{01}}\cP_{m_{11}}\cP_{n_{10}}\cP_{n_{01}}\cP_{n_{11}}=\\
\mathop{\sum_{(m_{10},m_{01},m_{11})\kp (\gamma_1,\gamma_3)}}_{(n_{10},n_{01},n_{11})\kp (\gamma_2,\gamma_4)}
q^{m_{10}m_{01}+n_{10}n_{01}}
\cP_{m_{10}}\cP_{m_{01}}\cP_{m_{11}}\cP_{n_{10}}\cP_{n_{01}}\cP_{n_{11}}.
\end{multline}
In fact both sides are equal to $\cP_{\gamma_1}\cP_{\gamma_2}\cP_{\gamma_3}\cP_{\gamma_4}$. The novelty of Keller's identity for $A_2 \square A_2$ is that in \eqref{eqn:55term} one can insert an extra factor in each term, namely
\begin{multline}\label{eqn:55termKeller}
\mathop{\sum_{(m_{10},m_{01},m_{11})\kp (\gamma_1,\gamma_2)}}_{(n_{10},n_{01},n_{11})\kp (\gamma_3,\gamma_4)}
q^{m_{10}m_{01}+n_{10}n_{01}+m_{11}n_{11}}
\cP_{m_{10}}\cP_{m_{01}}\cP_{m_{11}}\cP_{n_{10}}\cP_{n_{01}}\cP_{n_{11}}= \\
\mathop{\sum_{(m_{10},m_{01},m_{11})\kp (\gamma_1,\gamma_3)}}_{(n_{10},n_{01},n_{11})\kp (\gamma_2,\gamma_4)}
q^{m_{10}m_{01}+n_{10}n_{01}+m_{11}n_{11}}
\cP_{m_{10}}\cP_{m_{01}}\cP_{m_{11}}\cP_{n_{10}}\cP_{n_{01}}\cP_{n_{11}}.
\end{multline}
Now the two sides are ``just equal to each other''; they are not equal to a common simple expression, like $\cP_{\gamma_1}\cP_{\gamma_2}\cP_{\gamma_3}\cP_{\gamma_4}$ as in Equation \eqref{eqn:55term}. Although this new version does not seem to easily follow from any version of Reineke's identities, it translates naturally to an identity among quantum dilogarithm series
\[
\E(y_2)\E(y_3)\E(y_{12})\E(y_{34})\E(y_1)\E(y_4)
	= \E(y_1)\E(y_4)\E(y_{13})\E(y_{24})\E(y_2)\E(y_3),
\]
see Section \ref{s:mt} for notation.

\smallskip

The goal of this paper is to present a topological proof of Keller's dilogarithm identities associated with a pair of Dynkin quivers of type $A$, as follows.

\begin{thm*}[c.f.\ Theorem \ref{thm:mt}]
The following identity of quantum dilogarithm series holds in the completed quantum algebra of $A_n\square A_{n^\prime}$
\[
\prod_{(i,\phi)\in\Delta(A_{n'})\times\Phi(A_n)}^{\rightarrow} \E(y_{i,\phi}) = \prod_{(j,\psi)\in\Delta(A_n)\times\Phi(A_{n^\prime})}^{\rightarrow} \E(y_{j,\psi})
\]
where the products are respectively indexed by the simple and positive roots for the root systems corresponding to type $A_n$ and $A_{n^\prime}$ Dynkin diagrams. The arrows atop the products indicate that the products must be performed in a specific order.
\end{thm*}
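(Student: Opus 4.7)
The plan is to follow the Kontsevich-Soibelman blueprint outlined in the abstract. For a fixed dimension vector $\gamma$ on the $n\times n'$ grid, let $\M_\gamma$ denote the affine variety of representations of $A_n\square A_{n'}$, and let $\G_\gamma$ be its natural base-change group. I would equip $\M_\gamma$ with a potential $W_\gamma$ enforcing the commuting-square relations (essentially a sum of traces of commutators around the elementary squares of the grid) and study the $\G_\gamma$-equivariant rapid decay cohomology $\coho^*_{\G_\gamma}(\M_\gamma,W_\gamma)$. Packaging its Poincar\'e series, summed over all $\gamma$, into noncommuting $y$-variables indexed by the indecomposables of $A_n\square A_{n'}$, the goal is to compute this series in two ways and obtain the two sides of the theorem.

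The two computations come from two $\G_\gamma$-invariant stratifications. The horizontal one partitions $\M_\gamma$ by fixing, on each row of the grid (a row is indexed by a vertex, equivalently a simple root, of $A_{n'}$), the isomorphism class of the resulting $A_n$-representation, equivalently a multiplicity function $(m_\phi)_{\phi\in\Phi(A_n)}$. The vertical stratification is symmetric, exchanging rows with columns and the roles of $A_n$ and $A_{n'}$. Either stratification, iterated in Auslander-Reiten order, produces long exact sequences in rapid decay cohomology whose cumulative effect is a single sum-over-strata expression for the total Poincar\'e series. For each stratum, the contribution factors as a product of $\cP$-terms---one per multiplicity, coming from the classifying space of the stabilizer---twisted by a power of $q$ given by the codimension minus the rank of the Hessian of $W_\gamma$ along the stratum. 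Summed in AR order and encoded in the $y$-variables, the horizontal decomposition produces $\prod_{(i,\phi)\in\Delta(A_{n'})\times\Phi(A_n)}^{\rightarrow}\E(y_{i,\phi})$ and the vertical one its counterpart.

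The main obstacle I expect is the codimension-minus-Hessian-rank bookkeeping. Concretely, for any pair $(\phi,\psi)\in\Phi(A_n)\times\Phi(A_{n'})$ and prescribed multiplicities, one must compute $\dim\Hom$ and $\dim\Ext^1$ between the associated $A_n\square A_{n'}$-indecomposables in each fiber and verify that the resulting exponent of $q$ matches the bilinear form dictated by Keller's identity. Already the cross term $m_{11}n_{11}$ in \eqref{eqn:55termKeller} arises only after the potential is installed---it is exactly the Reineke identity \eqref{eqn:55term} that would result if $W_\gamma$ were absent---and its higher-rank analogues must be produced uniformly in $(n,n')$ from the Euler form of the square product. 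A secondary technical point, following \cite{mkys2014}, is justifying that the restriction of $W_\gamma$ to each stratum is, up to a linear change of coordinates, a nondegenerate quadratic form in the normal direction plus a function pulled back from the Reineke base; this is what allows the rapid decay cohomology to factor via a Thom-type isomorphism and delivers the expected $q$-power. Once both are settled, equating the two computations yields the identity in the completed quantum algebra.
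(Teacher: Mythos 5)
Your blueprint is the one the paper follows (and the one announced in the abstract): two stratifications of $\M_\gamma$, rapid decay cohomology with the square potential, a sum-over-strata expression for the Poincar\'e series, and a translation into $\hat\A_Q$. But at the one point where you commit to a formula you get it wrong, and it is precisely the point that produces Keller's extra factors. The twist attached to a stratum is not ``codimension minus the rank of the Hessian of $W_\gamma$''; it is $\codim_\C(\eta;\M_\gamma)$ \emph{plus} the superpotential shift $\w(\eta)$, where $\w(\eta)$ is the positive index of inertia (equivalently, half the rank) of the form obtained by restricting $-\Re W_\gamma$ to the normal locus $\nu_\eta$, i.e.\ with the horizontal maps frozen in normal form and only the vertical maps free. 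That form is degenerate in general: its blocks are tridiagonal Hermitian forms on $\C^p$ with eigenvalues $\cos(a\pi/(p+1))$, contributing $\lfloor p/2\rfloor$ positive and $\lfloor p/2\rfloor$ negative directions plus a kernel when $p$ is odd (Lemma \ref{lem:herm.form}). So your structural claim that $W_\gamma$ restricted to a stratum is a \emph{nondegenerate} quadratic form in the normal direction plus a pullback from a ``Reineke base'' fails as stated, and the correct $q$-power has to be extracted by the explicit $\G_\eta$-equivariant homotopy of pairs onto $(\R^{2\w(\eta)},\R^{2\w(\eta)}-B^{2\w(\eta)})$ followed by the Thom isomorphism (Theorems \ref{thm:our.hmtpy} and \ref{thm:poincare.series.stratum}). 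Two further points you gloss over: the strata are families of orbits, not orbits, so ``the classifying space of the stabilizer'' only makes sense after the normal-locus reduction of Lemma \ref{lem:meets.every.orbit} and Proposition \ref{prop:equivariant.eta.BG.eta}; and the ``single sum-over-strata expression'' needs the evenness/degeneration argument for the spectral sequence (Theorem \ref{thm:Kaz.spec.seq.RDC}).

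The second gap is the passage from the geometric identity to the dilogarithm identity. You propose to verify the exponents by computing $\Hom$ and $\Ext$ between indecomposables of $A_n\square A_{n'}$, but that is not how the matching works (and is not needed): what is required is, first, an ordering of the horizontal (resp.\ vertical) positive roots compatible with the sign of $\lambda$ within and across rows, whose existence is itself a nontrivial combinatorial statement (Theorem \ref{thm:order.exist}, Corollary \ref{cor:order.exist}) and without which the two ordered products in the statement are not even specified; and second, the quantum-algebra bookkeeping showing that rewriting $y_{\phi_1}^{m_{\phi_1}}\cdots y_{\phi_a}^{m_{\phi_a}}$ in simple-root variables produces exactly $q^{\codim_\C(\eta;\M_\gamma)+\w(\eta)}$ times a prefactor depending only on $\gamma$, the \emph{same} prefactor on both sides (Propositions \ref{prop:rr.qalg.codim}, \ref{prop:full.hhs.times.hts}, \ref{prop:w.qalg}, Lemma \ref{lem:switch.HH.HT}). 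Without that step the equality of the two stratified sums (Corollary \ref{cor:poinc.h.equals.poinc.v}) does not become Equation \eqref{eqn:mt}. In short, your proposal reproduces the strategy but leaves unproved—and in the case of the $q$-shift, mis-states—the two ingredients that constitute the actual proof.
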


The common value of the left-hand and right-hand sides above is called the Donaldson--Thomas invariant of the quiver with potential, denoted by $\E_{Q,W}$ \cite{bk2011}.

The main object in our proof is the representation space (a vector space) $\M_\gamma$ of the quiver $A_n \square A_{n'}$ acted upon by a group $\G_\gamma$.  We will consider two stratifications of the space $\M_\gamma$ and calculate the Poincar\'e series of $\coho^*_{\G_\gamma}(\M_\gamma)$ in two different ways corresponding to the two stratifications. The two Poincar\'e series expressions are hence equal, and also equal to the Poincar\'e series of $\coho^*(B\G_\gamma)$.  In fact, if we do what we just described we obtain identities of the type \eqref{eqn:55term} that are obvious consequences of Reineke's quantum dilogarithm identities. To achieve Keller's identities one further twist is needed, namely replacing ordinary equivariant cohomology with so-called {\em rapid decay cohomology}, introduced by Kontsevich--Soibelman, associated with a function (called superpotential trace) $W_\gamma: \M_\gamma \to \C$. With this twist, one obtains two explicit expressions corresponding to the two stratifications for the Poincar\'e series of the rapid decay cohomology algebra $\coho^*_{\G_\gamma}(\M_\gamma;W_\gamma)$. The equality in Theorem~\ref{thm:mt} translates to the fact that the two expressions are equal for any $\gamma$.

Calculating rapid decay equivariant cohomology algebras is rather difficult in general---it involves arguments over $\R$ even if our representation is complex algebraic. For example, we know no {\em direct} expression for the Poincar\'e series of $\coho^*_{\G_\gamma}(\M_\gamma;W_\gamma)$, only the ones obtained through the two stratifications mentioned above. Hence, let us comment on a few key points making our calculations work. One concerns the $G$-equivariant cohomology of a $G$-equivariant space $\eta$. It is well known that if $\eta$ is an orbit and $G_\nu$ is the stabilizer of an element $\nu\in \eta$, then $\coho^*_G(\eta)=\coho^*_{G_\nu}(\nu)$. One of our key steps is that this ``reduction to the normal form'' argument generalizes to certain situations (see Lemma \ref{lem:meets.every.orbit}) where $\eta$ is a family of orbits and $\nu$ an appropriate subset of $\eta$. The strata in our two stratifications of $\M_\gamma$ will have subsets $\nu$ for which the reduction argument works (see our Proposition \ref{prop:equivariant.eta.BG.eta}). Another main point of the proof is calculating the ``superpotential contributions''---c.f.\ the extra $q^{m_{11}n_{11}}$ factors distinguishing \eqref{eqn:55termKeller} from \eqref{eqn:55term}. On the one hand, detailed analysis of a quantum algebra will provide these contributions (see Section \ref{s:count.qalg}); on the other hand these contributions turn out to be related with the signature of a Hermitian form associated with our superpotential (Section~\ref{ss:hmtpy.norm.loci}).

For completeness let us mention that there is a formal difference between our main theorem (Theorem \ref{thm:mt}) and Keller's theorem \cite[Theorem~5.16 and Proposition~5.17]{bk2011}. Namely, in Keller's version one carries out a mutation algorithm on a graph for which the input consists of the quiver and the combinatorial data of a so-called \emph{maximal green sequence}; the two sides of the identities are described by the end positions of this algorithm. In other words, the resulting identity is not explicit in the sense that both sides are obtained by carrying out algorithms, requiring prior knowledge of two distinct maximal green sequences. In our version both sides of the identities are explicitly described. Nevertheless, we have no doubt that the two versions are the same.

\subsection*{Organization of the paper}
In Section \ref{s:quiver.prelim} we set out notations and give the needed background on quiver representations (especially for Dynkin quivers of type A), quivers with potential, and the quantum algebra associated to a quiver. In Section \ref{s:q.dilog.series} we define the quantum dilogarithm series which appear in our computation of the Donaldson--Thomas invariant. Section \ref{s:RDC} introduces rapid decay cohomology. In Section \ref{s:order.roots} we describe the construction of the square product and set out the notation used throughout the rest of the paper; in particular in the statement of our main theorem in Section \ref{s:mt}.

In Sections \ref{s:stratify.repspace}, \ref{s:w.strata}, and \ref{s:Kaz.Spec.Seq} we investigate the equivariant topology and geometry of the representation space which is necessary for our method. Most notably, in Section \ref{s:w.strata} we describe an explicit equivariant homotopy performed on the strata defined in Section \ref{s:stratify.repspace} which allows for the calculation of rapid decay cohomology. Furthermore, in Section \ref{ss:rdc.general}, we discuss the aspects of our method which would be necessary for a general framework to treat rapid decay cohomology algebras of general group representations, i.e.~not necessarily coming from representation spaces for (square products of) Dynkin quivers. Section \ref{s:count.qalg} describes the connection between certain combinatorial invariants from the geometry of our stratifications and arithmetic in the quantum algebra of the quiver. Finally, in Section \ref{s:pmt} we prove the main Theorem \ref{thm:mt}.

\subsection*{Acknowledgements} 

The idea of proving quantum dilogarithm identities by counting dimensions in a spectral sequence is due to Kontsevich and Soibelman; in fact, the present paper is guided by the arguments in \cite[Section 5]{mkys2011}, as well as the example on calculating rapid decay cohomology for the loop quiver with potential in \cite[Sect. 4.7]{mkys2011}. The authors also thank M.~Reineke and A.~Szenes for useful discussions on the topic of this paper. The first author acknowledges grant support from the Naval Academy Research Council and Office of Naval Research; the second author acknowledges the support of Simons Foundation grant 52388.

\section{Quiver preliminaries}
\label{s:quiver.prelim}

\subsection{Quiver representations}
\label{ss:quiv.rep}

A \defin{quiver} $Q = (Q_0,Q_1)$ is a directed graph with set of vertices $Q_0$ and set of directed edges $Q_1$ called \defin{arrows}. The maps $h:Q_1 \to Q_0$ and $t:Q_1\to Q_0$ respectively assign to each arrow its \defin{head} and \defin{tail}.

We will use the example on Figure \ref{fig:S.defn} as a running example throughout the paper, and call this quiver $S=A_2\square A_2$.
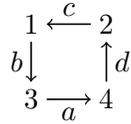
\begin{figure}
\begin{tikzpicture}[->,semithick,auto,inner sep=1mm]
\node (1) at (0,1) {$1$};
\node (2) at (1,1) {$2$};
\node (4) at (1,0) {$4$};
\node (3) at (0,0) {$3$};
\path
(1) edge node[left]  {$b$} (3)
(3) edge node[below] {$a$} (4)
(4) edge node[right] {$d$} (2)
(2) edge node[above] {$c$} (1);
\end{tikzpicture}
\caption{The quiver $S=A_2\square A_2$ with vertex set $S_0 = \{1,2,3,4\}$ and arrow set $S_1=\{a,b,c,d\}$ will serve as a running example throughout the paper.}
\label{fig:S.defn}
\end{figure}
For $S$ we have $h(a)=4$, $t(a)=3$, $h(d)=2$, \textit{et cetera}.

A vertex is called a \defin{source} (respectively \defin{sink}) if it is the tail (respectively head) of every arrow incident to it. A function $\gamma:Q_0\to\N$, or a choice of non-negative integer at each vertex, is called a \defin{dimension vector}. We will write $\gamma = (\gamma(v))_{v\in Q_0}$ in terms of its component functions. Throughout the rest of the paper, we let $\epsilon_v$ denote the unit dimension vector with a $1$ corresponding to vertex $v\in Q_0$ and zeroes elsewhere. For each choice of dimension vector $\gamma$ we define the \defin{representation space} of the quiver
\begin{equation}\label{eqn:M.defn}
\M_\gamma = \Dirsum_{a\in Q_1} \Hom\left(\C^{\gamma({t(a)})},\C^{\gamma(h(a))}\right)
\end{equation}
Elements of $\M_\gamma$ are called \defin{quiver representations}. The group $\G_\gamma = \prod_{v\in Q_0} \GL(\gamma(v),\C)$ acts on $\M_\gamma$ by base-change at each vertex; that is via
\begin{equation}\label{eqn:G.acts.M}
(g_v)_{v\in Q_0} \cdot (\phi_a)_{a\in Q_1} = (g_{h(a)} \phi_a g_{t(a)}^{-1})_{a\in \phi}.
\end{equation}
Each quiver comes equipped with a bilinear \defin{Euler form} $\chi :\N^{Q_0} \times \N^{Q_0} \to \Z$ which assigns to a pair of dimension vectors $\gamma_1 = (\gamma_{1}(v))_{v\in Q_0}$ and $\gamma_2=(\gamma_{2}(v))_{v\in Q_0}$ the integer
\begin{equation}\label{eqn:Euler.defn}
\chi (\gamma_1,\gamma_2 ) = \sum_{v\in Q_0} \gamma_{1}(v)\gamma_{2}(v) - \sum_{a\in Q_1} \gamma_{1}(t(a))\gamma_{2}(h(a)).
\end{equation}
Let $\lambda$ denote the opposite antisymmetrization of the Euler form
\begin{equation}\label{eqn:lambda.defn}
\lambda(\gamma_1,\gamma_2) = \chi( \gamma_2,\gamma_1 ) - \chi( \gamma_1,\gamma_2 ).
\end{equation}
In terms of the directed graph, $\lambda( \epsilon_u,\epsilon_v )$ reports the number of arrows $u\to v$ \emph{minus} the number of arrows $v\to u$. Hence, in the case that the quiver has no loops or double edges one has
\begin{equation}\label{eqn:lambda}
\lambda(\epsilon_u,\epsilon_v) =
\begin{cases}
+1, &\text{~if there is an arrow~} u\to v \\
-1, &\text{~if there is an arrow~} v\to u \\
\phantom{+}0, &\text{~otherwise}
\end{cases}
\end{equation}
The quadratic form $T_Q(\gamma) = \chi( \gamma,\gamma )$ is called the \defin{Tits form} of the quiver $Q$ which depends only on the underlying non-oriented graph of $Q$. A dimension vector $\gamma$ for which $T_Q(\gamma) = 1$ is called a \defin{root}.

\subsection{The path algebra of a quiver}
\label{ss:path.alg}

For any quiver $Q$, the \defin{path algebra} $\C Q$ is the $\C$-algebra spanned by all paths in the quiver, including the empty path at each vertex $\{\psi_i\}_{i\in Q_0}$. Multiplication is by concatenation of paths (read as function composition) whenever this makes sense, otherwise the product is zero. Observe that $\C Q$ is a unital ring with identity ${1}_{\C Q} = \sum_{i\in Q_0}\psi_i$. For example, in $\C S$ we have $\psi_1 \cdot b = a\cdot c = a \cdot d = 0$, but $d\cdot a = da$ and $b \cdot \psi_1 = b = \psi_3 \cdot b$.

Now consider the category of left $\C Q$-modules, which we denote by $\cqmod$. An object $M$ in this category determines a dimension vector by
\[\ddim(M) = (\dim(\psi_i \cdot M))_{i\in Q_0}.
\]
Therefore the Euler form can be defined on pairs of right $\C Q$-modules by \[\chi( M,N ) = \chi( \ddim(M),\ddim(N) ).\] The vector $\ddim(M)$ is called the \defin{dimension} of $M$.

For any two modules $M$ and $N$, it is known that $\Ext^i_{\C Q}(M,N) = 0$ for $i\geq 2$ and so $\Ext^1_{\C Q}(M,N)$ will be denoted simply by $\Ext(M,N)$. We will also write $\Hom(M,N)$ for $\Ext^0(M,N)$, the set of $\C Q$-module maps from $M$ to $N$. Furthermore, using the so-called Ringel (or standard projective) resolution (see for example~\cite[Theorem 2.15 and Proposition~8.4]{rs2014}) one obtains that
\begin{equation}
\label{eqn:chi.hom.ext}
\chi( M,N ) = \dhom(M,N) - \dext(M,N)
\end{equation}
where $\dhom(M,N) = \dim\Hom(M,N)$ and $\dext(M,N) = \dim\Ext(M,N)$. The full subcategory of $\cqmod$ corresponding to modules $M$ of fixed dimension $\gamma$ is known to be equivalent to the category whose set of objects is $\M_\gamma$ and whose morphisms $f:(\phi_{1,a})_{a\in Q_1} \to (\phi_{2,a})_{a\in Q_1}$ are given by vectors of linear mappings $f=(f_i)_{i\in Q_0}$ with the property that $f_{h(a)}\compose\phi_{1,a} = \phi_{2,a}\compose f_{t(a)}$ for each $a\in Q_1$, see for example \cite{rs2014}. We will implicitly use the categorical equivalence of $\C Q$-modules and quiver representations throughout the sequel.

\subsection{The quantum algebra of a quiver}
\label{ss:q.algebra}

Let $q^{1/2}$ be a variable and denote its square by $q$. The \defin{quantum algebra} $\A_Q$ of the quiver $Q$ is the $\Q(q^{1/2})$-algebra generated by symbols $y_\gamma$ for each dimension vector $\gamma$ and satisfying the relations
\begin{equation}\label{eqn:qalg.rel}
y_{\gamma_1+\gamma_2} = -q^{-\frac{1}{2} \lambda(\gamma_1,\gamma_2)}y_{\gamma_1}y_{\gamma_2}.
\end{equation}
The symbols $y_\gamma$ for each ${\gamma\in\N^{Q_0}}$ form a basis of $\A_Q$ as a vector space. The elements $\{y_{\epsilon_v}\}_{v\in Q_0}$ generate $\A_Q$ as an algebra. Notice that the relation \eqref{eqn:qalg.rel} implies that
\begin{equation}\label{eqn:qalg.comm}
y_{\gamma_1}y_{\gamma_2} = q^{\lambda(\gamma_1,\gamma_2)} y_{\gamma_2}y_{\gamma_1}.
\end{equation}
We let $\hat{\A}_Q$ denote the \defin{completed quantum algebra} in which formal power series in the symbols $y_\gamma$ are allowed. That is, $\hat{\A}_Q$ is the quotient of $\Q(q^{1/2})\langle\langle\{y_\gamma\}_{\gamma\in\N^{Q_0}}\rangle\rangle$ modulo relations given by Equation \eqref{eqn:qalg.rel}.

\begin{example}\label{ex:S.qA.rel}
In $\A_S$ denote $y_{\epsilon_i}$ by $y_i$ and $y_{\epsilon_i+\epsilon_j}$ by $y_{ij}$. Using the observation of Equation \eqref{eqn:lambda} a little computation gives that
\begin{align*}
y_2 y_1 & = q\,y_1y_2 & y_3y_1 &=q^{-1} y_1y_3 & y_4y_1 & =  y_1 y_4 \\
y_3 y_2 & = y_2 y_3 & y_4y_2 &= q\, y_2y_4 & y_4y_3 &= q^{-1}y_3y_4.
\end{align*}
One also obtains that
\begin{align*}
y_{12} &= -q^{1/2}y_1y_2 & y_{13} &= -q^{-1/2}y_1y_3 & y_{14} &= - y_1y_4 \\
y_{23} &= - y_2y_3 & y_{24} &=-q^{1/2}y_2y_4 & y_{34} & =-q^{-1/2}y_3y_4.
\end{align*}
\end{example}

\subsection{Quivers with potential}
\label{ss:quiv.with.pot}

A \defin{quiver with potential} is a pair $(Q,W)$ where $Q$ is a quiver and $W$ is an element of the space $\C Q/[\C Q,\C Q]$. That is, a monomial of $W$ is a cyclic path, i.e.\ starts and ends at the same vertex, but is unique only up to cyclic permutation. The element $W$ is called a \defin{superpotential}. The systematic study of the representation theory of quivers with potential was initiated by Derksen--Weymann--Zelevinsky \cite{hdjwaz2008,hdjwaz2010}, and is now a large and active field in its own right. One important aspect is that the superpotential naturally defines a regular function on the representation space; we will describe this in more detail in Sections \ref{ss:rdc.w} and \ref{ss:hmtpy.norm.loci}.

\begin{example}
\label{ex:W.S}
For the quiver $S$, a superpotential (up to cyclic permutation and scaling) is a linear combination of the paths $(abcd)^\ell$ for some natural number $\ell$. For the purpose of this paper we choose $W = - abcd$ as the superpotential on $S$.
\end{example}

\subsection{Dynkin quivers}
\label{ss:dynkin.quivers}

A \defin{Dynkin quiver} is a quiver whose underlying non-directed graph is a simply-laced Dynkin diagram; that is, of type $A$, $D$, or $E$. Let $Q$ be a Dynkin quiver with arbitrarily oriented arrows with corresponding set of positive roots $\Phi$. The indecomposable objects of $\cqmod$ are in one-to-one correspondence with the elements of $\Phi$ \cite{pg1972} and we denote the indecomposable module corresponding to $\beta\in \Phi$ by $M_\beta$. Simple roots are in bijection with vertices and in the sequel we will not distinguish between a simple root and its dimension vector $\epsilon_v$.

A \defin{Kostant partition} \cite{bk1959} of $\gamma$ is a vector of non-negative integers $\bfs{m}=(m_\beta)_{\beta\in\Phi}$ such that \[\gamma = \sum_{\beta\in\Phi} m_\beta \, \ddim({M_\beta}).\] We indicate $\bfs{m}$ is a Kostant partition for the dimension vector $\gamma$ by writing $\bfs{m} \kp \gamma$. Observe that for a fixed $\gamma$ there are only finitely many Kostant partitions.
Moreover, two $\C Q$-modules $M$ and $N$ with $\ddim(M)=\gamma=\ddim(N)$ are isomorphic if and only if their corresponding quiver representations are in the same $\G_\gamma$-orbit of $\M_\gamma$, and hence there are only finitely many $\G_\gamma$-orbits \cite{pg1972}. In particular, the $\G_\gamma$-orbits are in one-to-one correspondence with the set of Kostant partitions. Given $\bfs m \kp \gamma$ for a Dynkin quiver $Q$, we denote the associated $\G_\gamma$-orbit by $\Omega_{\bfs m}(Q)$. If there is no ambiguity regarding the quiver we simply write $\Omega_{\bfs m}$.

\subsection{Type $A$ quiver orbits}
\label{ss:type.A}

Let $N$ be a positive integer and let $Q$ be an orientation of a type $A_{N}$ Dynkin diagram. Write $Q_0 = \{1,2,\ldots,N\}$, labeled so that $Q$ is an orientation of
\begin{equation}
\label{eqn:AN.Dynkin}
\begin{tikzpicture}[baseline=(current  bounding  box.center),thick,black]

	\node (1) at (0,0) {$1$};
	\node (2) at (1,0) {$2$};
	\node (d) at (2.2,0) {$\cdots$};
	\node (N) at (3.4,0) {$N$};
	
	\path
		(1) edge (2)
		(2) edge (d)
		(d) edge (N);
\end{tikzpicture}.
\end{equation}

\begin{remark}
The positive roots for type $A_N$ correspond to intervals $[k,l]\subseteq [1,N]$. In particular, for each positive root $\beta\in \Phi$, there exist unique $k$ and $l$ such that $\beta = \sum_{i=k}^{l} \epsilon_{i}$. We let $\beta = \beta_{kl}$ denote this root throughout the paper.
\end{remark}

Given a dimension vector $\gamma = (\gamma(1),\ldots,\gamma(N))$ and a Kostant partition $\bfs{m}=(m_{\beta}) \kp \gamma$ defining the orbit $\Omega_{\bfs m}\subset\M_{\gamma}$, one can draw a \defin{lace diagram} as follows.

Consider a grid with $N$ columns. Draw $\gamma(i)$ dots in column $i$, justified at the top. Draw $m_{\beta}$ distinct line segment paths which begin at an unused dot in column $k$, ending at an unused dot in column $l$, and passing through one unused dot in each column between $k$ and $l$. Observe that for $\beta=\epsilon_{i}$ a simple root, there is no drawing to do, so $m_{\epsilon_{i}}$ dots in the $i$-th column are left untouched by line segments (but are still considered ``used''). Since $\bfs{m}\kp \gamma$ all dots will be used in the end.

A lace diagram for an orbit encodes all the information regarding the rank of the maps along each arrow and also incidence information regarding how the images, kernels, and cokernels of each map interact at vertices. By appropriately permuting the dots in each column, which is equivalent to acting by elements of $\G_{\gamma}$ consisting of permutation matrices, one can always obtain a lace diagram with no crossings. The choice of non-crossing lace diagram can be made unique if we specify a total ordering on the positive roots. For this purpose we choose to list the simple roots $\beta_{ii}$ last, and for the non-simple roots $\{\beta_{ij}:i<j\}$ we choose the lexicographic order on the subscripts; that is,
\begin{equation}
\label{eqn:lex.order.roots}
\beta_{ik} < \beta_{jl} \iff (i<j) \text{~or~} (i=j \text{~and~} k<l)
\end{equation}
Finally, the resulting \defin{canonical lace diagram} is obtained by drawing the line segments for roots $\beta$ in order, beginning from the top row, and always using the highest unused dot in each column. The importance is that the canonical lace diagram can be interpreted as a distinguished element in the $\G_{\gamma}$-orbit $\Omega_{\bfs{m}}$ as follows.

For each arrow $a$, form a $h(a)\times t(a)$ matrix by putting a $1$ in the $(i,j)$ spot whenever there is a segment in the lace diagram connecting the $j$-th dot from the top of the target column to the $i$-th dot from the top of the source column. Set all other matrix entries to $0$. We will call this element the \defin{normal form} for $\Omega_{\bfs{m}}$ and denote it by $\nu_{\bfs{m}}=(\nu_{a})_{a\in Q_{1}}$. Of course, the normal form \emph{does} depend on the orientation of the arrows.

\begin{example}
\label{ex:norm.form}
Consider the quiver $1\rightarrow 2 \rightarrow 3 \leftarrow 4$ with dimension vector $\gamma = (5,5,5,4)$. Take the Kostant partition with $m_{\beta_{14}}=2$ and $m_{\beta}=1$ for $\beta\in \{\beta_{12},\beta_{13},\beta_{24},\beta_{11},\beta_{33},\beta_{44}\}$ and $m_{\beta}=0$ otherwise. The resulting canonical lace diagram is
\begin{center}
	\begin{tikzpicture}
	\node (11) at (0,2) {$\bullet$};
	\node (12) at (0,1.5) {$\bullet$};
	\node (13) at (0,1) {$\bullet$};
	\node (14) at (0,.5) {$\bullet$};
	\node (15) at (0,0) {$\bullet$};
	\node (21) at (1,2) {$\bullet$};
	\node (22) at (1,1.5) {$\bullet$};
	\node (23) at (1,1) {$\bullet$};
	\node (24) at (1,.5) {$\bullet$};
	\node (25) at (1,0) {$\bullet$};
	\node (31) at (2,2) {$\bullet$};
	\node (32) at (2,1.5) {$\bullet$};
	\node (33) at (2,1) {$\bullet$};
	\node (34) at (2,.5) {$\bullet$};
	\node (35) at (2,0) {$\bullet$};
	\node (41) at (3,2) {$\bullet$};
	\node (42) at (3,1.5) {$\bullet$};
	\node (43) at (3,1) {$\bullet$};
	\node (44) at (3,.5) {$\bullet$};

	\draw (0,2) -- (1,2);
	\draw (0,1.5) -- (1,1.5) -- (2,2);
	\draw (0,1) -- (1,1) -- (3,2);
	\draw (0,.5) -- (1,.5) -- (3,1.5);
	\draw (1,0) -- (3,1);
	\end{tikzpicture}

\end{center}
which corresponds to the normal form
\[
\nu_{(m_{\beta})}=
\left(
\begin{bmatrix}
1 & 0 & 0 & 0 & 0 \\
0 & 1 & 0 & 0 & 0 \\
0 & 0 & 1 & 0 & 0 \\
0 & 0 & 0 & 1 & 0 \\
0 & 0 & 0 & 0 & 0
\end{bmatrix},
\begin{bmatrix}
0 & 1 & 0 & 0 & 0 \\
0 & 0 & 1 & 0 & 0 \\
0 & 0 & 0 & 1 & 0 \\
0 & 0 & 0 & 0 & 1 \\
0 & 0 & 0 & 0 & 0
\end{bmatrix},
\begin{bmatrix}
0 & 0 & 0 & 0 \\
1 & 0 & 0 & 0 \\
0 & 1 & 0 & 0 \\
0 & 0 & 1 & 0 \\
0 & 0 & 0 & 0
\end{bmatrix}
\right)
\]
in the representation space \[\M_{(5,5,5,4)}=\Hom(\C^{5},\C^{5})\dirsum\Hom(\C^{5},\C^{5})\dirsum\Hom(\C^{4},\C^{5}).\]
\end{example}

We introduce an operation on positive roots which we will use in Section \ref{ss:w.contrib.shifts}.

\begin{defn}
\label{defn:pos.rts.intersect}
Given two positive roots $\alpha' = \beta_{ij}$ and $\alpha'' = \beta_{uv}$, we let $\alpha'\intersect\alpha''$ denote the positive root $\beta_{st}$ where $s$ and $t$ are obtained by the intersection $[i,j]\intersect[u,v] = [s,t]$. Let $\delta(\alpha',\alpha'') := t-s$ denote the length of this interval. If the intersection of intervals above is empty, then we write $\alpha'\intersect\alpha'' = \emptyset$ and $\delta(\alpha',\alpha'') = 0$. When $[i,j]\intersect[u,v] = [s,t]$ is a non-empty intersection, we define $k(\alpha',\alpha'') = s$ and $\ell(\alpha',\alpha'') = t$. By abuse of notation, we will also write that $k(\alpha'') = k(\alpha'',\alpha'') = u$ and $\ell(\alpha'')=\ell(\alpha'',\alpha'')=v$.
\end{defn}

\section{Quantum dilogarithm series}
\label{s:q.dilog.series}

Given a variable $z$ we define the \defin{quantum dilogarithm series} to be the element in $\Q(q^{1/2})[[z]]$ defined by the series
\begin{equation}
\label{eqn:E.defn}
\E(z)= 1 + \sum_{j=1}^\infty \frac{(-z)^j q^{j^2/2}}{\prod_{k=1}^j (1-q^k)}.
\end{equation}

\begin{defn}
Given an $\R$-algebra $\curly{A}$, graded by the natural numbers, for which the $j$-graded piece $\curly{A}_j$ is a finite-dimensional $\R$-vector space for every $j\in\N$, the \defin{Poincar\'e series} of $\curly{A}$ in the variable $q^{1/2}$ is
\[
\cP[\curly{A}] = \sum_{j\in\N} q^{j/2} \dim_\R(A_j).
\]
\end{defn}

Throughout the sequel, we set $\cP_{j} = \cP[\coho^*(B\GL(j,\C))]$; this is the Poincar\'e series for equivariant cohomology of the group $\GL(j,\C)$. We recall that, since the cohomology $\coho^*(B\GL(j,\C))$ is a polynomial ring in the Chern classes of $\GL(j,\C)$, all of the odd cohomology groups vanish, and we have $\cP_j = \sum_{r\geq 0} q^r\,\dim(\coho^{2r}(B\GL(j,\C)))$, and obtain that $\curly P_0 = 1$ and $\cP_j = \prod_{r=1}^j (1-q^r)^{-1}$ for $j>0$. Thus we notice that the quantum dilogarithm series can be written as
\begin{equation}
\label{eqn:E.defn.poincare}
\E(z) = \sum_{j\geq 0} (-z)^j q^{j^2/2}\, \cP_{j}.
\end{equation}
We remark that often in the literature, see for example \cite{bk2011}, the quantum dilogarithm series is instead defined to be
\begin{equation}\label{eqn:KE.defn}
1 + \sum_{j\geq 1} \frac{z^j q^{j^2/2}}{(q^j-1)(q^j-q)\cdots(q^j-q^{j-1})}.
\end{equation}
Note that in the series \eqref{eqn:KE.defn}, the denominators count elements of the group $\GL(j,\mathbb{F}_q)$. The two formulations \eqref{eqn:E.defn} and \eqref{eqn:KE.defn} are images of each other under the involution $q^{1/2} \mapsto -q^{-1/2}$. However, it is more convenient for our purposes to count generators in $\coho^*(B\GL(j,\C)) \iso \coho^*(B\U(j))$ and therefore we choose to work with the formulation given by Equation \eqref{eqn:E.defn}. This is consistent with the conventions in \cite{rr2013}.

\section{Rapid decay cohomology}
\label{s:RDC}

\subsection{Definition}
\label{ss:RDC.defn}

Let $X$ be a complex algebraic variety and $f:X\to \C$ a regular function on $X$. Let $\Re[z]$ denote the real part of the complex number $z$, and for every $t\in \R$ define the set $S_t = \{z \in \C : \Re[z]\leq t\}$ and let $X_f(t) = f^{-1}(S_t) \subset X$. The \defin{rapid decay cohomology} $\coho^*(X;f)$ is the limit as $t\to -\infty$ of the relative cohomology of the pair $\coho^*(X,X_f(t))$. It is known that this cohomology stabilizes for finite $t_0\in\R$ with $t_0 \ll 0$. For more on this definition and the choice of terminology, see \cite[Section~4.1]{mkys2011}.

If the algebraic group $G$ acts on $X$ such that $X_f(t)$ is invariant for all $t\ll0$ then the \defin{equivariant rapid decay cohomology} is also defined; explicitly $\coho^*_G(X;f) = \lim_{t \to -\infty} \coho^*_G(X,X_f(t))$. In the sequel we will use the equivariant version but simply say ``rapid decay cohomology'' and omit the extra adjective.

\subsection{Rapid decay cohomology from superpotentials}
\label{ss:rdc.w}

For any quiver with potential $(Q,W)$, fix a dimension vector $\gamma$. We obtain a regular function $W_\gamma:\M_\gamma\to\C$ as follows.

Given a path $p=a_{1}a_{2}\ldots a_{\ell}$ in the quiver and an element $\phi=(\phi_{a})_{a\in Q_{1}}\in \M_{\gamma}$, let $\phi_{p}$ denote the composition $\phi_{a_{1}}\phi_{a_2}\cdots\phi_{a_\ell}$. If $p$ forms an oriented cycle then it makes sense to consider $\tr(\phi_p)$. For $W = \sum_p c_p p$ a finite linear combination of oriented cycles $p$, we set
\begin{equation}
\label{eqn:W.gamma.defn}
W_\gamma(\phi) = \sum_p c_p \tr(\phi_p),
\end{equation}
which is well-defined because $\tr(\phi_{a_{1}}\phi_{a_2}\cdots\phi_{a_\ell})$ is invariant under cyclic permutations of the $\phi_{a_i}$ factors.

\begin{example}
\label{ex:trace.function.S}
For $S$ with superpotential $W = -abcd$ we obtain $W_\gamma:\M_\gamma \to \C$
\begin{equation}
\label{eqn:Wgamma.S}
\phi = (\phi_a,\phi_b,\phi_c,\phi_d) \longmapsto
-\tr(\phi_a\compose\phi_b\compose\phi_c\compose\phi_d).
\end{equation}
Note that for any $g=(g_1,g_2,g_3,g_4)\in \G_\gamma$ we have
\[
\begin{aligned}
W_\gamma(g\cdot\phi)
	&= -\tr\left((g_4\phi_a g_3^{-1}) (g_3 \phi_b g_1^{-1}) (g_1 \phi_c g_2^{-1}) (g_2 \phi_d g_4^{-1})\right) \\
	&= -\tr(g_4 \phi_a\phi_b\phi_c\phi_d g_4^{-1}) = W_\gamma(\phi)
\end{aligned}
\]
where the last equality follows because trace is an invariant of conjugacy classes. Therefore the rapid decay cohomology $\coho^*_{\G_\gamma}(\M_\gamma;W_\gamma)$ is well-defined. We remark that a similar proof shows that for any quiver with potential $(Q,W)$ and $W_\gamma$ determined by Equation \eqref{eqn:W.gamma.defn}, the equivariant rapid decay cohomology $\coho^*_{\G_\gamma}(\M_\gamma;W_\gamma)$ is well-defined.
\end{example}

\section{Notation and ordering of roots in square products}
\label{s:order.roots}

\subsection{The quiver $Q = A_n\square A_{n^\prime}$}
\label{ss:not.Q}
The following construction of the square product of $A_n$ and $A_{n^\prime}$ follows that of Keller  \cite{bk2011,bk2013.fpsac}. Label the vertices of $A_n$ and $A_{n^\prime}$ with elements of $[n]$ and $[n^\prime]$ as in \eqref{eqn:AN.Dynkin}. In this way, $Q_0$ is identified with $\{(i,j)\,:\,1\leq i \leq n,\,1\leq j \leq n^\prime\}$. We will use standard matrix notation for these vertices, realizing $Q_0$ on a square grid. Assign alternating orientations to $A_n$ and $A_{n^\prime}$ such that the vertex labeled $1$ is a source in $A_n$ and a sink in $A_{n^\prime}$. Denote these oriented quivers by $\vec{A}_n$ and $\vec{A}_{n^\prime}$ respectively. Form $Q$ by reversing the arrows in the full subquiver $\{i\}\times \vec{A}_{n^\prime}$ and $\vec{A}_n \times \{j\}$ whenever $i$ is a sink in $\vec{A}_n$ and $j$ is a source in $\vec{A}_{n^\prime}$. This produces a grid of oriented squares. Figure \ref{fig:A3.A4.label} depicts the quiver $A_3\square A_4$ with this labeling system applied.

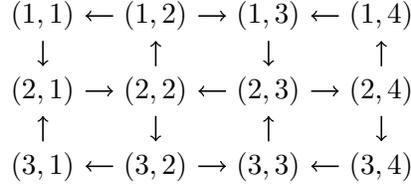
\begin{figure}
\begin{tikzpicture}[->,semithick,auto]

\node (11) at (0,2) {$(1,1)$};
\node (21) at (0,1) {$(2,1)$};
\node (31) at (0,0) {$(3,1)$};
\node (12) at (1.5,2) {$(1,2)$};
\node (22) at (1.5,1) {$(2,2)$};
\node (32) at (1.5,0) {$(3,2)$};
\node (13) at (3,2) {$(1,3)$};
\node (23) at (3,1) {$(2,3)$};
\node (33) at (3,0) {$(3,3)$};
\node (14) at (4.5,2) {$(1,4)$};
\node (24) at (4.5,1) {$(2,4)$};
\node (34) at (4.5,0) {$(3,4)$};

\path
(11) edge  (21)
(31) edge  (21)
(22) edge  (12)
(22) edge  (32)
(13) edge  (23)
(33) edge  (23)
(24) edge  (14)
(24) edge  (34)
(12) edge  (11)
(12) edge  (13)
(14) edge  (13)
(21) edge  (22)
(23) edge  (22)
(23) edge  (24)
(32) edge  (31)
(32) edge  (33)
(34) edge  (33);
\end{tikzpicture}
\caption{The quiver $A_3\square A_4$ where, for example, $Q(\bullet,2)$ is the ``outbound $A_3$" sub-quiver $(1,2)\leftarrow(2,2)\rightarrow(3,2)$.}
\label{fig:A3.A4.label}
\end{figure}

Observe that to each vertex $i$ of $A_n$ there is an associated alternating $A_{n^\prime}$ quiver (across a row) which we denote by $\rowQ{i}$ and similarly for each vertex $j$ of $A_{n^\prime}$ there is an associated alternating $A_n$ quiver (down a column) which we denote by $\colQ{j}$. We call $\rowQ{i}$ a \defin{horizontal sub-quiver} and $\colQ{j}$ a \defin{vertical sub-quiver}. These conventions imply that if $(i,j)$ is a sink (respectively source) in $\rowQ{i}$ then it is a source (resp.~sink) in $\colQ{j}$.

A vertex $(i,j)$ is called a \defin{horizontal head} (respectively a \defin{horizontal tail}) if it is a sink (resp.~a source) in the horizontal sub-quiver $\rowQ{i}$; that is, if it is a source (resp.~sink) when one considers only the horizontal arrows as depicted in the example of Figure \ref{fig:A3.A4.label}. We define \defin{vertical head} and \defin{vertical tail} similarly. We denote the respective sets of horizontal heads and tails by $\hhs$ and $\hts$, and vertical heads and tails by $\vhs$ and $\vts$. Observe that $\hhs = \vts$ and similarly $\vhs = \hts$. The two sets $\hhs = \vts$ and $\hts = \vhs$ are respectively called \defin{even} and \defin{odd} vertices in Keller's description of square product quivers \cite{bk2013.fpsac}.

Let $\epsilon_{i,j} = \epsilon_{(i,j)}$ denote the \defin{simple root} associated to the vertex $(i,j)$ which, we recall, is identified with the dimension vector having a $1$ in the spot corresponding to the vertex $(i,j)$ and zeroes elsewhere. Observe $T_Q(\epsilon_{i,j}) = 1$ so that it is indeed a root of $Q$ and moreover, corresponds to a simple root in the sense of root systems for simple Lie algebras for each of the sub-quivers $\rowQ{i}$ and $\colQ{j}$. Let
\[
\hpri{i} = \{\beta_{k,l}^{(i)}\,:\,1\leq k\leq l\leq n^\prime\}
\]
denote the positive roots for $\rowQ{i}$ where
\[
\textstyle\beta_{k,l}^{(i)} = \sum_{j=k}^l \epsilon_{i,j}.
\]
It is not difficult to check that each $\beta_{k,l}^{i}$ is also a root of $Q$; i.e.~$T_Q(\beta_{k,l}^{i}) = 1$, but we will not explicitly need this fact in the sequel. Similarly we define
\[
\vprj{j} = \{\beta^{(j)}_{k,l} = {\textstyle\sum_{i=k}^l \epsilon_{i,j}}\}
\]
as the positive roots for the vertical sub-quiver $\colQ{j}$.

Let $\beta'\in\hpri{i'}$ and $\beta''\in\hpri{i''}$. We say that $\beta'$ and $\beta''$ are from the \defin{same row in $Q$} if $i'=i''$. We define the \defin{same column in $Q$} analogously. Set
\begin{align*}
\hpr &= \Union_{i\in[n]} \hpri{i} &\text{and} &
&\vpr &= \Union_{j\in[n^\prime]} \vprj{j}
\end{align*}
which we call \defin{horizontal positive roots} and \defin{vertical positive roots} respectively.

We endow $Q$ with the structure of a quiver with potential as follows. Let $p_{ij}$ denote the oriented cycle involving the vertices $(i,j)$, $(i,j+1)$, $(i+1,j+1)$, and $(i+1,j)$. This is a path in $Q$ which starts and ends at the same vertex. So, we may define a superpotential for $Q$ by
\begin{equation}
\label{eqn:W.for.Q}
W = - \sum_{i=1}^{n-1}\sum_{j=1}^{n^\prime-1} p_{ij}
\end{equation}
which we fix throughout the rest of the paper.

Finally, given a dimension vector $\gamma = (\gamma(v))_{v\in Q_0}$, we denote the dimension of the vector space at vertex $(i,j)$ not by $\gamma((i,j))$, but more simply by $\gamma(i,j)$.

\subsection{Ordering positive roots}
\label{ss:ord.rts}

We define an ordering $(\hpr,\prec)$ on the set of horizontal positive roots by the following rules. Choose distinct $\beta',\beta'' \in \hpr$. If $\beta'$ and $\beta''$ are from the {same row} of $Q$ then
\begin{subequations}
\begin{align}
\beta'\prec\beta'' &\implies \lambda(\beta',\beta'')\geq 0.
\label{eqn:row.ord.within}
\intertext{%
On the other hand, if $\beta'$ and $\beta''$ are from {different rows} of $Q$, then
}
\beta'\prec\beta'' &\implies \lambda(\beta',\beta'')\leq 0.
\label{eqn:row.ord.without}
\end{align}
\end{subequations}
Because $\lambda(\beta',\beta'')=0$ for many choices of $\beta'$ and $\beta''$, this ordering is not unique. However, we remark that Equation \eqref{eqn:qalg.comm} implies that if $\lambda(\beta',\beta'')=0$ then the elements $y_{\beta'}$ and $y_{\beta''}$ commute in the quantum algebra $\A_Q$ and its completion $\hat\A_Q$. This is on purpose---our order is chosen to exploit commutation relations in the quantum algebra---a connection we explain in Sections \ref{s:count.qalg} and \ref{s:pmt}.

Similarly, if $\beta',\beta'' \in \vpr$ are from the {same column} then
\[
\beta'\prec\beta'' \implies \lambda(\beta',\beta'')\geq 0,
\]
and if $\beta',\beta'' \in \vpr$ are from {different columns} then
\[
\beta'\prec\beta'' \implies \lambda(\beta',\beta'')\leq 0.
\]
The second author defined an order on the positive roots for any Dynkin quiver in \cite[Section~4]{rr2013} as follows. For positive roots $\beta',\beta''$ from the same row, i.e.\ $\beta'$ and $\beta''$ are both positive roots for the same Dynkin quiver $\rowQ{i}$, we have
\begin{equation}
\label{eqn:rr.pos.roots.order}
\beta' \prec \beta'' \implies
	\Hom(M_{\beta'},M_{\beta''}) = 0
	\text{~and~}
	\Ext(M_{\beta''},M_{\beta'}) = 0
\end{equation}
where $M_\alpha$ denotes the indecomposable $\C\rowQ{i}$-module corresponding to the positive root $\alpha$.

\begin{lem}
\label{lem:rr.order.is.our.order}
The ordering of positive roots within the same row prescribed by \eqref{eqn:row.ord.within} is equivalent to the ordering prescribed by \eqref{eqn:rr.pos.roots.order} in the following sense:
\begin{enumerate}[leftmargin=*,label=(\alph*)]
\item if $\alpha\prec\beta$ is allowed by \eqref{eqn:row.ord.within} and $\beta\prec\alpha$ is allowed by \eqref{eqn:rr.pos.roots.order}, then $\lambda(\alpha,\beta)=0$;
\item if $\alpha\prec\beta$ is allowed by \eqref{eqn:rr.pos.roots.order} and $\beta\prec\alpha$ is allowed by \eqref{eqn:row.ord.within}, then $\lambda(\alpha,\beta)=0$.
\end{enumerate}
The analogous statements hold for vertical sub-quivers and vertical positive roots.
\end{lem}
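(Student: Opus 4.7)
The plan is to reduce both (a) and (b) to the homological interpretation of $\lambda$. Recall from \eqref{eqn:lambda.defn} and \eqref{eqn:chi.hom.ext} that $\lambda(\alpha,\beta) = \chi(\beta,\alpha) - \chi(\alpha,\beta)$ and that for $\C Q$-modules $M,N$ one has $\chi(M,N) = \dhom(M,N) - \dext(M,N)$. First I would observe that if $\alpha,\beta$ are both in $\hpri{i}$, then their dimension vectors, viewed in $\N^{Q_0}$, are supported in the $i$-th row of $Q$; consequently every vertex and every arrow of $Q$ lying outside $\rowQ{i}$ contributes zero to the Euler form \eqref{eqn:Euler.defn}, so $\chi$, and hence $\lambda$, may be computed in either $Q$ or the sub-quiver $\rowQ{i}$. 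Similarly the indecomposable $\C Q$-module of dimension $\alpha$ is supported in the $i$-th row and coincides with the indecomposable $\C\rowQ{i}$-module $M_\alpha$ featured in \eqref{eqn:rr.pos.roots.order}. Thus the whole problem takes place inside the Dynkin sub-quiver.

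For part (a), I would assume $\alpha\prec\beta$ is allowed by \eqref{eqn:row.ord.within}, giving $\lambda(\alpha,\beta)\geq 0$, while $\beta\prec\alpha$ is allowed by \eqref{eqn:rr.pos.roots.order}, which gives $\dhom(M_\beta,M_\alpha)=0$ and $\dext(M_\alpha,M_\beta)=0$. Inserting these vanishings into \eqref{eqn:chi.hom.ext} yields $\chi(\beta,\alpha) = -\dext(M_\beta,M_\alpha)\leq 0$ and $\chi(\alpha,\beta) = \dhom(M_\alpha,M_\beta)\geq 0$, so $\lambda(\alpha,\beta) = \chi(\beta,\alpha)-\chi(\alpha,\beta)\leq 0$. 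Combined with the hypothesis $\lambda(\alpha,\beta)\geq 0$, this forces $\lambda(\alpha,\beta)=0$. Part (b) is the symmetric computation: $\alpha\prec\beta$ by \eqref{eqn:rr.pos.roots.order} gives $\chi(\alpha,\beta)\leq 0$ and $\chi(\beta,\alpha)\geq 0$, hence $\lambda(\alpha,\beta)\geq 0$; meanwhile $\beta\prec\alpha$ by \eqref{eqn:row.ord.within} reads $\lambda(\beta,\alpha)\geq 0$, i.e., $\lambda(\alpha,\beta)\leq 0$ by antisymmetry, so again $\lambda(\alpha,\beta)=0$. The analogous assertions for vertical roots are obtained verbatim by substituting $\colQ{j}$ and $\vprj{j}$ for $\rowQ{i}$ and $\hpri{i}$ throughout.

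There is no real obstacle: the lemma is essentially a clean corollary of the Ringel identity \eqref{eqn:chi.hom.ext} together with the antisymmetry of $\lambda$. The only minor bookkeeping step that deserves attention is the compatibility of the Euler form of $Q$ with that of the sub-quiver $\rowQ{i}$ (or $\colQ{j}$) when the dimension vectors are supported in a single row (or column); this reduction is immediate from \eqref{eqn:Euler.defn}, but it is the ingredient that legitimately allows one to import the second author's prior results on Dynkin orderings into the present square-product setting.
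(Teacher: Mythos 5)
Your proof is correct and follows essentially the same route as the paper: both arguments combine the sign condition $\lambda(\alpha,\beta)=\chi(\beta,\alpha)-\chi(\alpha,\beta)$ from \eqref{eqn:row.ord.within} with the Ringel identity \eqref{eqn:chi.hom.ext} and the vanishing of $\Hom$/$\Ext$ supplied by \eqref{eqn:rr.pos.roots.order} to force $\lambda(\alpha,\beta)=0$ in each case. Your explicit remark that the Euler form and the indecomposables may be computed inside the sub-quiver $\rowQ{i}$ is a harmless bookkeeping addition that the paper leaves implicit.
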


\begin{proof}
(a) Suppose that $\alpha\prec\beta$ is allowed by \eqref{eqn:row.ord.within}. Then $\lambda(\alpha,\beta) = \chi(\beta,\alpha) - \chi(\alpha,\beta) \geq 0$ which, by Equation \eqref{eqn:chi.hom.ext}, implies that
\[
\dhom(M_\beta,M_\alpha)+\dext(M_\alpha,M_\beta) \geq  \dhom(M_\alpha,M_\beta) + \dext(M_\beta,M_\alpha).
\]
Now if $\beta\prec\alpha$ is allowed by \eqref{eqn:rr.pos.roots.order}, then the left hand side above vanishes. Since the right-hand side is non-negative, it must also vanish, and hence $\lambda(\alpha,\beta) = 0$.

(b) Suppose now that $\alpha \prec \beta$ is allowed by \eqref{eqn:rr.pos.roots.order}. Then using Equation \eqref{eqn:chi.hom.ext} again, we obtain that
\begin{align*}
\lambda(\alpha,\beta) =
	&\,\chi(M_\beta,M_\alpha) - \chi(M_\alpha,M_\beta) \\	
=	&\left[\dhom(M_\beta,M_\alpha) - \dext(M_\beta,M_\alpha) \right] \\
	&\quad - \left[\dhom(M_\alpha,M_\beta) - \dext(M_\alpha,M_\beta) \right] \\
=	&\dhom(M_\beta,M_\alpha) + \dext(M_\alpha,M_\beta)
\end{align*}
which is non-negative, i.e.\ $\lambda(\alpha,\beta)\geq 0$. On the other hand, if $\beta\prec\alpha$ is allowed by \eqref{eqn:row.ord.within} then $\lambda(\beta,\alpha) \geq 0$ which establishes the claim.
\end{proof}

\subsection{On the existence of our ordering}
\label{ss:order.exist}

Observe that when $k$ is odd, the subquiver $\rowQ{k}$ has the form
\[
(k,1) \leftarrow (k,2) \rightarrow (k,3) \leftarrow (k,4) \;\cdots\; (k,n').
\]
We organize the roots $\hpri{k}$ into an $(n'+1)\times n'$ matrix $\curly{M}^{(k)}$ as follows. In the $(i,j)$-entry $\curly{M}_{ij}^{(k)}$ with $i+j$ \emph{odd}, we put $\beta^{(k)}_{u,v}$ where
\begin{equation}
\label{eqn:order.matrix}
u=\begin{cases}
j-i+1 & \text{if~} j\geq i \\
i-j & \text{if~} j<i
\end{cases}
\quad and \quad
v=\begin{cases}
i+j-1 & \text{if~} i+j \leq n'+1 \\
2n'+2-i-j & \text{if~} i+j >n'+1
\end{cases}
\end{equation}
and leave the other entries blank. Alternatively when $k$ is even, $\rowQ{k}$ has the form
\[
(k,1) \rightarrow (k,2) \leftarrow (k,3) \rightarrow (k,4) \;\cdots\; (k,n').
\]
and we put $\beta^{(k)}_{u,v}$ in the $(i,j)$-entry $\curly{M}^{(k)}_{ij}$ with $i+j$ \emph{even}, again with $u$ and $v$ defined as in \eqref{eqn:order.matrix}.

\begin{defn}
\label{defn:row.number}
Let $\rho(\beta^{(k)}_{u,v})$ denote the row index of the entry $\beta^{(k)}_{u,v}$ in the matrix $\curly{M}^{(k)}$.
\end{defn}

\begin{example}
\label{ex:order.matrix.rho}
Let $n'=5$ and $k=2$. We have
\[
\curly{M}^{(2)} =
\left(
\begin{array}{ccccc}
 \beta^{(2)}_{1,1} &   & \beta^{(2)}_{3,3} &   & \beta^{(2)}_{5,5} \\
   & \beta^{(2)}_{1,3} &   & \beta^{(2)}_{3,5} &   \\
 \beta^{(2)}_{2,3} &   & \beta^{(2)}_{1,5} &   & \beta^{(2)}_{3,4} \\
   & \beta^{(2)}_{2,5} &   & \beta^{(2)}_{1,4} &   \\
 \beta^{(2)}_{4,5} &   & \beta^{(2)}_{2,4} &   & \beta^{(2)}_{1,2} \\
   & \beta^{(2)}_{4,4} &   & \beta^{(2)}_{2,2} &   \\
\end{array}
\right)
\]
and notice that $\rho(\beta^{(2)}_{1,5}) = 3$ and $\rho(\beta^{(2)}_{1,3}) = 2$.
\end{example}

\begin{remark}
\label{rem:ord.mat.AR.position}
Observe that the arrangement of the roots $\beta^{(k)}_{u,v}$ into the $\curly{M}^{(k)}$ tables is the natural position of them (or more precisely their corresponding indecomposable $\C\rowQ{k}$-modules) when one builds the Auslander--Reiten graph of an alternating $A$-type quiver using the ``knitting algorithm'', see e.g.~\cite[Chapter~3]{rs2014}.
\end{remark}

\begin{thm}
\label{thm:order.exist}
With the definitions above, we have
\begin{enumerate}[label={$\bullet$\;\textbf{\emph{Claim~\arabic*.}}},leftmargin=*,align=left]
\item if $\rho(\beta^{(k)}_{u,v}) = \rho(\beta^{(l)}_{s,t})$, then $\lambda(\beta^{(k)}_{u,v},\beta^{(l)}_{s,t}) = 0$;
\item if $\rho(\beta^{(k)}_{u,v}) < \rho(\beta^{(l)}_{s,t})$ and $k\neq l$, then $\lambda(\beta^{(k)}_{u,v},\beta^{(l)}_{s,t}) \leq 0$;
\item if $\rho(\beta^{(k)}_{u,v}) < \rho(\beta^{(l)}_{s,t})$ and $k = l$, then $\lambda(\beta^{(k)}_{u,v},\beta^{(l)}_{s,t}) \geq 0$.
\end{enumerate}
\end{thm}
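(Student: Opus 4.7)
The plan is to reduce each claim to a direct combinatorial calculation using the sparse arrow structure of $Q = A_n \square A_{n'}$. Since all arrows either lie within a single horizontal subquiver $\rowQ{k}$ or form a single vertical arrow between $(k,j)$ and $(k+1,j)$, the pairing $\lambda(\beta^{(k)}_{u,v}, \beta^{(l)}_{s,t})$ is automatically $0$ whenever $|k-l| \geq 2$, and all three claims hold trivially in that range.

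For $|k-l| = 1$, which covers the cross-row cases of Claims 1 and 2: the orientation convention of Section~\ref{ss:not.Q} implies that the vertical arrow at column $j$ between rows $k$ and $k+1$ points from row $k$ to row $k+1$ precisely when $k+j$ is even. Substituting into \eqref{eqn:Euler.defn} and \eqref{eqn:lambda.defn} yields
\[
\lambda(\beta^{(k)}_{u,v}, \beta^{(l)}_{s,t}) = \sum_{j \in [u,v] \cap [s,t]} (-1)^{k+j},
\]
an alternating sum over an integer interval which vanishes when $[u,v] \cap [s,t]$ has even cardinality and equals $(-1)^{k+\max(u,s)}$ when the cardinality is odd. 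Using the four parity branches of \eqref{eqn:order.matrix}, I would verify that $\rho(\beta^{(k)}_{u,v}) = \rho(\beta^{(l)}_{s,t})$ forces the intersection cardinality to be even (giving Claim~1), while the strict inequality $\rho(\beta^{(k)}_{u,v}) < \rho(\beta^{(l)}_{s,t})$ forces either even cardinality or odd cardinality with $(-1)^{k+\max(u,s)} = -1$ (giving Claim~2).

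For the case $k = l$, which covers the within-row parts of Claims 1 and 3: here $\lambda$ becomes the antisymmetrized Euler form of $\rowQ{k}$ alone. I would partition the configurations of the two intervals $[u,v]$ and $[s,t]$ into the patterns (disjoint, adjacent, properly overlapping, nested, equal) and compute $\lambda$ in each as a sum of $\pm 1$ boundary contributions determined by whether the boundary vertex is a horizontal source or sink, then compare against the formulas for $\rho$ read off from \eqref{eqn:order.matrix}. A potentially cleaner alternative, hinted at in Remark~\ref{rem:ord.mat.AR.position}, is to identify the rows of $\curly{M}^{(k)}$ with the levels of the Auslander--Reiten quiver of $\rowQ{k}$ produced by the knitting algorithm; the standard AR-theoretic description of $\Hom$ and $\Ext$ combined with \eqref{eqn:chi.hom.ext} and Lemma~\ref{lem:rr.order.is.our.order} would then translate Claims 1 and 3 into known properties of the AR structure, after which one invokes Lemma~\ref{lem:rr.order.is.our.order} to pass between the $\rho$-order and \eqref{eqn:rr.pos.roots.order}.

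The main obstacle will be the parity bookkeeping: the formula for $\rho$ branches four ways according to the parities of $u$ and $v$, this interacts with the parity of $k$, and the cross-row analysis must additionally account for the parity of $\max(u,s)$. Each subcase reduces to an elementary arithmetic check, but assembling all the subcases into a clean argument is where the labor lies. If the AR-theoretic shortcut for the $k = l$ part can be made to go through cleanly, it would substantially reduce the amount of casework, leaving only the cross-row alternating-sum analysis to handle directly.
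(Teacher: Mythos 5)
Your proposal follows essentially the same route as the paper: dismissing $|k-l|\geq 2$ as trivial, reducing the adjacent-row cases to counting upward versus downward vertical arrows in the overlap $[u,v]\cap[s,t]$ (your alternating sum $\sum_j(-1)^{k+j}$ is exactly that count, and your orientation convention and the evaluation $(-1)^{k+\max(u,s)}$ for odd overlap are correct), with the remaining parity casework over the branches of \eqref{eqn:order.matrix} left as finite verification just as the paper does, and handling $k=l$ via Remark~\ref{rem:ord.mat.AR.position}, the Auslander--Reiten structure, and Lemma~\ref{lem:rr.order.is.our.order}, which is precisely the paper's Remark~\ref{rem:k=1}. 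So the plan is sound and matches the paper's argument, with the alternating-sum formula being only a mild repackaging of the same computation.
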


\begin{proof}
By the nature of defining $\rho$, each of these claims breaks down to a finite number of possible cases. For each of the possibilities, the proof is a straightforward combinatorial verification. For illustration, we show one case.

Let $k=1$ and $l=2$ (see Remark \ref{rem:|k-l|>1}). Choose even integers $\rho_1 < \rho_2$ and choose roots $\beta^{(1)}_{u,v}$ and $\beta^{(2)}_{s,t}$ such that $\rho_1 = \rho(\beta^{(1)}_{u,v})$ and $\rho_2 = \rho(\beta^{(2)}_{s,t})$; that is, we will verify a case of Claim 2. Let $j_1$ and $j_2$ denote the column indices respectively of $\beta^{(1)}_{u,v}$ in $\curly{M}^1$ and $\beta^{(2)}_{s,t}$ in $\curly{M}^2$. Observe that since $\rho_1 + j_1$ and $\rho_2 + j_2$ are respectively odd and even, we must have that $j_1$ is odd and $j_2$ is even.

Further suppose that we are in the case that $j_1 \geq \rho_1$, $j_1 + \rho_1 \leq n'+1$, $j_2 \geq \rho_2$, and $j_2 + \rho_2 > n' +1$. This implies that
\begin{align*}
u & = j_1 - \rho_1 + 1 & v & = \rho_1 + j_1 - 1 \\
s & = j_2 - \rho_2 + 1 & t & = 2n'+2-\rho_2-j_2
\end{align*}
so we see that $s$ is odd, while $u$, $v$, and $t$ are even. To compute the value of $\lambda(\beta^{(1)}_{u,v},\beta^{(2)}_{s,t})$ we need to determine the \emph{overlap} of the intervals $[u,v]$ and $[s,t]$ and the number of up and down arrows from $Q$ appearing in this overlap.

If the overlap is empty, then $\lambda = 0$ and the claim is trivial. Otherwise, we know that the right-hand endpoint of the overlap must be even (since $v$ and $t$ are even), say $2f$ for some integer $f$. If $u>s$, then the left-hand endpoint is even and the portion of $Q$ in the overlap looks like:
\begin{center}
\begin{tikzpicture}
\node (11) at (0,1.5) {$(1,u)$};
\node (12) at (3,1.5) {$(1,u+1)$};
\node (13) at (6,1.5) {$(1,u+2)$};
\node (1m-1) at (9,1.5) {$(1,2f-1)$};
\node (1m) at (12,1.5) {$(1,2f)$};

\node (21) at (0,0) {$(2,u)$};
\node (22) at (3,0) {$(2,u+1)$};
\node (23) at (6,0) {$(2,u+1)$};
\node (2m-1) at (9,0) {$(2,2f-1)$};
\node (2m) at (12,0) {$(2,2f)$};

\node at (7.5,.75) {$\cdots$};

\draw[->] (11) -- (12);
\draw[->] (13) -- (12);
\draw[->] (1m) -- (1m-1);
\draw[->] (22) -- (21);
\draw[->] (22) -- (23);
\draw[->] (2m-1) -- (2m);
\draw[->] (21) -- (11);
\draw[->] (12) -- (22);
\draw[->] (23) -- (13);
\draw[->] (1m-1) -- (2m-1);
\draw[->] (2m) -- (1m);
\end{tikzpicture}
\end{center}
Hence the value of $\lambda(\beta^{(1)}_{u,v},\beta^{(2)}_{s,t}) = -1$ since there is $1$ more upward arrow than downward arrow above. Conversely, if $s>u$, then the left-hand endpoint of the overlap will be odd, and there will be exactly as many upward and downward arrows in the corresponding picture. In this case we obtain $\lambda(\beta^{(1)}_{u,v},\beta^{(2)}_{s,t}) = 0$.
\end{proof}

\begin{remark}
\label{rem:|k-l|>1}
Observe that if $|k-l|>1$ then the claims are trivial.
\end{remark}

\begin{remark}
\label{rem:k=1}
In the cases when $k=l$, the claims actually follow from Lemma \ref{lem:rr.order.is.our.order}, Remark \ref{rem:ord.mat.AR.position}, and the fact the dimensions of $\Hom$ and $\Ext$ between indecomposable $\C\rowQ{k}$-modules can be read off from the shape of the Auslander--Reiten quiver, see e.g.~\cite[Chapter~3]{rs2014}.
\end{remark}

\begin{defn}
Let $\hpr_r$ denote the set $\{\beta\in\hpr : \rho(\beta)=r\}$.
\end{defn}

\begin{cor}
\label{cor:order.exist}
The order obtained by listing horizontal positive roots in $\hpr_r$ in arbitrary order (for each $r$), and putting
\[
\hpr_1 \prec \hpr_2 \prec \cdots \prec \hpr_{n'+1}
\]
satisfies the requirements of Section \ref{ss:ord.rts}.
\end{cor}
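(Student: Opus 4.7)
The plan is to derive this corollary directly from Theorem \ref{thm:order.exist} by a short case analysis on pairs $\beta' \prec \beta''$ of horizontal positive roots under the proposed order. First I would unpack what the proposed order actually says: two distinct roots are comparable either because they lie in different $\hpr_r$'s (smaller $r$ first) or because they lie in the same $\hpr_r$ (in which case the internal order is arbitrary, so I must verify the requirements of Section \ref{ss:ord.rts} in \emph{both} directions for such a pair).

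The same-$\hpr_r$ situation is handled immediately by Claim~1 of Theorem \ref{thm:order.exist}: any pair in a common $\hpr_r$ has $\rho(\beta') = \rho(\beta'')$ and hence $\lambda(\beta',\beta'') = 0$, so both \eqref{eqn:row.ord.within} and \eqref{eqn:row.ord.without} hold trivially, and the symmetry of this conclusion covers the arbitrariness of the ordering inside $\hpr_r$. For the remaining situation $\rho(\beta') < \rho(\beta'')$, I would split into two sub-cases. If $\beta'$ and $\beta''$ belong to the same row of $Q$ (i.e.\ the superscripts $k=l$ in the notation of Theorem \ref{thm:order.exist}), Claim~3 gives $\lambda(\beta',\beta'') \geq 0$, matching the implication \eqref{eqn:row.ord.within}. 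If instead $k \neq l$, Claim~2 gives $\lambda(\beta',\beta'') \leq 0$, matching \eqref{eqn:row.ord.without}. These sub-cases are exhaustive.

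The main obstacle has already been absorbed into Theorem \ref{thm:order.exist} itself, where the sign of $\lambda$ is determined from the matrix positions $\rho$ by the combinatorial verification sketched there. Once those three claims are in hand, the corollary reduces to a bookkeeping exercise translating between the row index $\rho$ in the Auslander--Reiten arrangements $\curly{M}^{(k)}$ and the two inequality requirements of Section \ref{ss:ord.rts}, and no further difficulty is expected.
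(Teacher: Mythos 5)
Your proposal is correct and follows essentially the same route as the paper: the paper's proof likewise invokes Claim~1 to dispose of pairs within a single $\hpr_r$ (where $\lambda$ vanishes, so the internal order is immaterial) and Claims~2 and~3 to handle pairs with distinct $\rho$-values in different rows and the same row, respectively. Your write-up simply spells out the case analysis that the paper states in one line.
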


\begin{proof}
Claim 1 implies that each $\hpr_r$ can be ordered arbitrarily. Claims 2 and 3 respectively guarantee that roots coming from different and same rows of $Q$ (in the sense of Section \ref{ss:not.Q}) are appropriately ordered.
\end{proof}

\section{The main theorem}
\label{s:mt}

Set $a=|\hpr| = nn^\prime(n^\prime+1)/2$ and $b=|\vpr|=n^\prime n(n+1)/2$ and choose labelings for the horizontal and vertical positive roots from the respective sets $[a]$ and $[b]$ which respect the orderings defined in Section \ref{ss:ord.rts}. That is, write $\hpr=\{\phi_r\,:\,r\in[a]\}$ and $\vpr=\{\psi_s\,:\,s\in[b]\}$ such that
\begin{subequations}
\begin{gather}
\phi_1\prec\phi_2\prec\cdots\prec\phi_a\label{eqn:hor.ord} \\
\psi_1\prec\psi_2\prec\cdots\prec\psi_b.\label{eqn:ver.ord}
\end{gather}
\end{subequations}

\begin{thm}\label{thm:mt}
With the orderings \eqref{eqn:hor.ord} and \eqref{eqn:ver.ord}, the following identity of quantum dilogarithm series holds in the completed quantum algebra $\hat\A_Q$
\begin{equation}
\label{eqn:mt}
\E(y_{\phi_1})\E(y_{\phi_2})\cdots\E(y_{\phi_a}) = \E(y_{\psi_1})\E(y_{\psi_2})\cdots\E(y_{\psi_b}).
\end{equation}
\end{thm}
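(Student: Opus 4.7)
The plan is to prove the identity by fixing a dimension vector $\gamma$, extracting the coefficient of $y_\gamma$ on each side, and showing that both coefficients compute the same Poincar\'e series, namely $\cP[\coho^*_{\G_\gamma}(\M_\gamma;W_\gamma)]$, via two natural stratifications of $\M_\gamma$. Using \eqref{eqn:E.defn.poincare} and the basis relation \eqref{eqn:qalg.rel}, expanding $\E(y_{\phi_1})\cdots\E(y_{\phi_a})$ and collecting the coefficient of $y_\gamma$ produces an explicit sum over tuples of non-negative integers $(m_\phi)_{\phi\in\hpr}$ satisfying $\sum m_\phi\,\phi=\gamma$, weighted by products of $\cP_{m_\phi}$ and a $q^{1/2}$-monomial whose exponent is determined by $\lambda$ and the fixed order \eqref{eqn:hor.ord}; an analogous expansion applies to the right-hand side indexed by tuples $(m_\psi)_{\psi\in\vpr}$. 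So the theorem reduces to showing, for each $\gamma$, that these two weighted sums are equal.

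Next I would produce the horizontal stratification $\M_\gamma=\bigsqcup \M_{\bfs m^{\hor}}$ indexed by choices, for each row $i$, of a Kostant partition $\bfs m^{(i)}\kp(\gamma(i,1),\dots,\gamma(i,n'))$ for the horizontal sub-quiver $\rowQ{i}$. Forgetting the vertical arrows, each stratum is a $\G_\gamma$-stable family of $\prod_i\G_{\gamma(i,\bullet)}$-orbits in $\prod_i \M_{\gamma(i,\bullet)}(\rowQ{i})$, and by Proposition~\ref{prop:equivariant.eta.BG.eta} its equivariant cohomology reduces via the ``meets every orbit'' argument (Lemma~\ref{lem:meets.every.orbit}) to the equivariant cohomology of a fiber over a canonical normal form $\nu_{\bfs m^{\hor}}$, which is an affine space on which the stabilizer acts linearly. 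By the Kazdan spectral sequence of Section~\ref{s:Kaz.Spec.Seq}, $\cP[\coho^*_{\G_\gamma}(\M_\gamma;W_\gamma)]$ equals the sum over strata of their individual rapid decay contributions. Doing the same with the vertical stratification (strata indexed by Kostant partitions of the columns $\colQ{j}$) yields a second such expression.

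To convert these geometric expressions into the expansions of Step~1, two ingredients are required. First, the codimension/normal-bundle bookkeeping for each stratum has to produce the $\cP_{m}$ and $q^{1/2}$-powers coming from $\lambda$; this is where the orderings of Section~\ref{ss:ord.rts} matter, since the ordering must refine a filtration of the stratum compatible with Reineke-type slicing. Corollary~\ref{cor:order.exist} guarantees such an order exists, and Lemma~\ref{lem:rr.order.is.our.order} reconciles it with the vanishing-$\Hom/\Ext$ condition needed for the stratification to be locally trivial. Second, one must compute the rapid decay contribution from the superpotential $W_\gamma$ restricted to a stratum. Using Section~\ref{ss:hmtpy.norm.loci}, one performs an explicit $\G_\gamma$-equivariant homotopy of each stratum onto the locus where $W_\gamma$ is represented by a Hermitian form, and reads off the rapid decay cohomology from the Morse-theoretic signature contribution; combined with the arithmetic inside the quantum algebra $\A_Q$ developed in Section~\ref{s:count.qalg}, this produces precisely the extra $q$-factor distinguishing Keller's refined identity (c.f.\ $q^{m_{11}n_{11}}$ in \eqref{eqn:55termKeller}) from Reineke's version.

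The main obstacle will be the superpotential bookkeeping: showing that, for every stratum in both the horizontal and vertical stratifications, the signature of the Hermitian form governing the rapid decay contribution matches the $q^{1/2}$-monomial coming from the quantum algebra expansion of the corresponding quantum dilogarithm product. This requires (a) identifying, at each normal form $\nu_{\bfs m}$, the exact combinatorial data — the intersections $\alpha'\cap\alpha''$ of Definition~\ref{defn:pos.rts.intersect} governing how cycles $p_{ij}$ contribute to $W_\gamma$ — and (b) verifying that summing these contributions over the stratum, together with the normal bundle weight, reproduces the $\lambda$-twisted commutation monomial dictated by the fixed order. Once this match is established for both stratifications, the two expansions of $\cP[\coho^*_{\G_\gamma}(\M_\gamma;W_\gamma)]$ coincide for every $\gamma$, proving \eqref{eqn:mt}.
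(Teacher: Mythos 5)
Your proposal follows essentially the same route as the paper's own proof: fix $\gamma$, extract the coefficient of $y_\gamma$ from each side via \eqref{eqn:E.defn.poincare}, identify each side (up to a common $\gamma$-dependent monomial) with the horizontal and vertical stratum sums $\sum q^{\w+\codim_\C}\cP$ using the normal-locus reduction, the Hermitian-form homotopy for the superpotential shift, and the quantum-algebra bookkeeping of Section~\ref{s:count.qalg}, and then conclude by the Kazarian spectral sequence equality of the two Poincar\'e series. The ``main obstacle'' you flag is exactly what Proposition~\ref{prop:w.qalg} and Theorem~\ref{thm:poincare.series.stratum} resolve, so the plan is sound and matches the paper's argument.
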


\begin{defn}
\label{defn:DT}
The common value of both sides of Equation \eqref{eqn:mt} is called the \defin{Donaldson}--\defin{Thomas} \defin{invariant} (DT invariant) of the quiver with potential $(Q,W)$ \cite{bk2011, bk2013}.
\end{defn}

We will prove this theorem in Section \ref{s:pmt}.

\begin{example}\label{ex:}
Combining the notation $\epsilon_{i,j}$ from Section \ref{ss:not.Q} for simple roots and the convention for numbering the vertices of $S$ from Figure \ref{fig:S.defn}, we write
\begin{align*}
\zeta_1 &= \epsilon_{11} & \zeta_4 &=\epsilon_{22} & \zeta_2 &=\epsilon_{12} & \zeta_3 &=\epsilon_{21} \\
\zeta_{12} &=\zeta_1+\zeta_2 & \zeta_{34} &=\zeta_3 + \zeta_4 & \zeta_{13} &=\zeta_1 + \zeta_3 & \zeta_{24} &=\zeta_2 + \zeta_4.
\end{align*}
One can check that
\begin{align*}
\zeta_2 \prec \zeta_3 \prec \zeta_{12} \prec \zeta_{34} \prec \zeta_1 \prec \zeta_4
\intertext{is an allowed ordering of the horizontal roots and that}
\zeta_1 \prec \zeta_4 \prec \zeta_{13} \prec \zeta_{24} \prec \zeta_1 \prec \zeta_4
\end{align*}
is an allowed ordering of the vertical roots. Let $y_\bullet$ denote $y_{\zeta_\bullet}$. Then the main theorem asserts that
\begin{equation}
\label{eqn:dilog.S.2211}
\E(y_2)\E(y_3)\E(y_{12})\E(y_{34})\E(y_1)\E(y_4) 
	= \E(y_1)\E(y_4)\E(y_{13})\E(y_{24})\E(y_2)\E(y_3).
\end{equation}
We remark that the list of quantum dilogarithms above is grouped into commuting pairs. Namely, one can check that $\{y_{2},y_{3}\}$, $\{y_{12},y_{34}\}$, $\{y_{13},y_{24}\}$ and $\{y_{1},y_{4}\}$ are all commuting sets in $\hat\A_{Q}$.
\end{example}

\section{Stratification of the representation space}
\label{s:stratify.repspace}

Let $\gamma$ be a dimension vector for $Q$ and let $\gamma(i,\bullet)$, $\gamma(\bullet,j)$ denote the respective resulting dimension vectors on $Q(i,\bullet)$ and $Q(\bullet,j)$. We define a \defin{horizontal Kostant series of} $\gamma$ to be a list $\bfs{m}^\bullet = (\bfs{m}^1,\ldots,\bfs{m}^n)$ where each $\bfs{m}^i = (m^i_\beta)_{\beta\in\hpri{i}}$ is a Kostant partition of $\gamma(i,\bullet)$. Similarly we can define \defin{vertical Kostant series} $(\bfs{m}^1,\ldots,\bfs{m}^{n^\prime})$ with each $\bfs{m}^j=(m^j_\beta)_{\beta\in\vprj{j}}$ a Kostant partition for $\gamma(\bullet,j)$. In an abuse of notation, we will write $\bfs{m}^\bullet \vdash \gamma$. Equivalently, we will sometimes use the notation $\bfs{m}^\bullet = (m_\beta)_{\beta\in\hpr}$ for a horizontal Kostant series, and analogously $\bfs{m}^\bullet = (m_\beta)_{\beta\in\vpr}$ for vertical Kostant series. The intended notation will be clear from context. Observe that since there are only finitely many Kostant partitions for each $\gamma(i,\bullet)$ (respectively $\gamma(\bullet,j)$), there are only finitely many horizontal (resp.\ vertical) Kostant series for $\gamma$.

\begin{defn}
\label{defn:strata}
For every horizontal Kostant series $\bfs{m}^\bullet$ we define $\eta(\bfs{m}^\bullet)\subset\M_\gamma$ as the locus where the maps along horizontal arrows in each $Q(i,\bullet)$ belong to the type $A_{n^\prime}$ quiver orbit $\Omega_{\bfs{m}^i}(\rowQ{i})$. We allow complete freedom on the vertical arrows. We call $\eta(\bfs{m}^\bullet)$ a \defin{horizontal stratum}. Similarly we define a \defin{vertical stratum} $\theta(\bfs{m}^\bullet) \subset \M_\gamma$ associated to the vertical Kostant series $\bfs{m}^\bullet$. Let $\Hor(\M_\gamma)$ and $\Ver(\M_\gamma)$ denote respectively the sets of all horizontal and vertical strata in $\M_\gamma$. Often, when the Kostant series is understood, we will more succinctly write $\eta = \eta(\bfs{m}^\bullet)$ and $\theta = \theta(\bfs{m}^\bullet)$.
\end{defn}

Observe that the horizontal and vertical strata are $\G_\gamma$-invariant and
\[\Union_{\eta\in\Hor(\M_\gamma)} \eta = \M_\gamma = \Union_{\theta\in\Ver(\M_\gamma)} \theta.\]
The following proposition follows immediately from the definitions.

\begin{prop}
\label{prop:codim.eta}
The (complex) codimension of $\eta(\bfs{m}^\bullet) \subset \M_\gamma$ satisfies
\[
\codim_\C(\eta(\bfs{m}^\bullet);\M_\gamma) =
	\sum_{i\in[n]} \codim_\C(\Omega_{\bfs {m}^i}(\rowQ{i});\M_{\gamma(i,\bullet)})
\]
where $\M_{\gamma(i,\bullet)}$ denotes the representation space associated to the quiver $\rowQ{i}$ with dimension vector $\gamma(i,\bullet)$. The analogous result holds for vertical strata. \qed
\end{prop}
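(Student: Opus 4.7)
The proposition is essentially a direct unpacking of the definitions, so the plan is short. The main observation is that the representation space decomposes as a direct sum indexed by arrows, and the defining conditions for a horizontal stratum only constrain the horizontal arrow components while leaving the vertical components entirely free.

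First, I would decompose $\M_\gamma$ using \eqref{eqn:M.defn}, grouping the summands according to whether the arrow $a\in Q_1$ is horizontal or vertical:
\[
\M_\gamma \;=\; \Bigl(\,\Dirsum_{i\in[n]} \M_{\gamma(i,\bullet)}\,\Bigr) \;\dirsum\; \Bigl(\,\Dirsum_{j\in[n']} V_{\gamma(\bullet,j)}^{\text{ver}}\,\Bigr),
\]
where $\M_{\gamma(i,\bullet)}$ is the representation space of the horizontal sub-quiver $\rowQ{i}$ at dimension $\gamma(i,\bullet)$, and $V_{\gamma(\bullet,j)}^{\text{ver}}$ collects the $\Hom$-spaces associated with the vertical arrows inside the column sub-quiver $\colQ{j}$ at dimension $\gamma(\bullet,j)$. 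This decomposition makes sense because every arrow of $Q$ is either horizontal or vertical, and no arrow belongs to two different rows or two different columns.

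Next, by Definition \ref{defn:strata}, the locus $\eta(\bfs{m}^\bullet)$ consists of those tuples whose horizontal components in row $i$ lie in $\Omega_{\bfs{m}^i}(\rowQ{i}) \subset \M_{\gamma(i,\bullet)}$, while the vertical components are unconstrained. With respect to the decomposition above this means
\[
\eta(\bfs{m}^\bullet) \;=\; \Bigl(\,\prod_{i\in[n]} \Omega_{\bfs{m}^i}(\rowQ{i})\,\Bigr) \;\times\; \Bigl(\,\Dirsum_{j\in[n']} V_{\gamma(\bullet,j)}^{\text{ver}}\,\Bigr),
\]
identifying points of the product on the right with points of $\M_\gamma$ via the direct sum decomposition.

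Finally, codimension is additive across direct products of locally closed subvarieties inside direct products of ambient spaces, and the full vertical factor contributes codimension $0$. Thus
\[
\codim_\C(\eta(\bfs{m}^\bullet);\M_\gamma) \;=\; \sum_{i\in[n]}\codim_\C\bigl(\Omega_{\bfs{m}^i}(\rowQ{i});\M_{\gamma(i,\bullet)}\bigr),
\]
which is the desired formula. The vertical version is literally the same argument with rows and columns swapped. There is no real obstacle here; the only thing to be careful about is the bookkeeping that confirms the arrow set of $Q$ is the disjoint union of the arrow sets of the $\rowQ{i}$ and the $\colQ{j}$, which is immediate from the square-product construction in Section \ref{ss:not.Q}.
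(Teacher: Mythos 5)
Your proposal is correct and is precisely the unpacking that the paper has in mind: the paper offers no written argument beyond noting that the proposition ``follows immediately from the definitions,'' and your decomposition of $\M_\gamma$ into horizontal and vertical arrow summands, with the stratum being the product of the orbits $\Omega_{\bfs{m}^i}(\rowQ{i})$ and the full (unconstrained) vertical factor, is exactly that immediate argument made explicit.
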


\begin{defn}
\label{defn:Poincare.series.eta}
For each stratum $\eta = \eta(\bfs{m}^\bullet)$ we define $\cP_\eta$ to be the associated Poincar\'e series $\cP_\eta := \cP[\coho^*_{\G_{\gamma}}(\eta)]$. Similarly, for vertical strata we write $\cP_\theta: = \cP[\coho^*_{\G_\gamma}(\theta)]$.
\end{defn}

\begin{prop}
\label{prop:poincare.eta}
For the Poincar\'e series $\cP_\eta$ we have
\[
\cP_\eta = \prod_{\beta \in \hpr} \cP_{m_\beta}.
\]
The analogous result is true for vertical strata.
\end{prop}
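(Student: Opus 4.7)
The plan is to exploit the fact that the horizontal stratum splits as a direct product under the $\G_\gamma$-equivariant decomposition
\[
\M_\gamma = \Bigl(\Dirsum_{i\in [n]} \M_{\gamma(i,\bullet)}\Bigr) \dirsum \M_\gamma^{\mathrm{ver}},
\]
where $\M_\gamma^{\mathrm{ver}}$ collects the vertical arrow factors. By Definition \ref{defn:strata}, the stratum factors as $\eta(\bfs m^\bullet) = \bigl(\prod_{i\in[n]} \Omega_{\bfs m^i}(\rowQ{i})\bigr) \times \M_\gamma^{\mathrm{ver}}$ as a $\G_\gamma$-space. The vertical factor is a complex vector space on which scaling $\phi_a \mapsto t\phi_a$ commutes with the base-change action \eqref{eqn:G.acts.M}, producing a $\G_\gamma$-equivariant deformation retraction onto $\{0\}$. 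Hence $\eta$ is $\G_\gamma$-homotopy equivalent to $\prod_i \Omega_{\bfs m^i}(\rowQ{i})$.

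Next, I would write $\G_\gamma = \prod_{i\in[n]} \G_{\gamma(i,\bullet)}$; the $i$-th factor acts on $\Omega_{\bfs m^i}(\rowQ{i})$ and acts trivially on the other factors. A Künneth argument then gives
\[
\coho^*_{\G_\gamma}(\eta) \iso \Tensor_{i\in[n]} \coho^*_{\G_{\gamma(i,\bullet)}}(\Omega_{\bfs m^i}(\rowQ{i})),
\]
reducing the problem to computing the Poincar\'e series $\cP[\coho^*_{\G_{\gamma(i,\bullet)}}(\Omega_{\bfs m^i}(\rowQ{i}))]$ individually for each row $i$.

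For fixed $i$, $\Omega_{\bfs m^i}(\rowQ{i})$ is a single orbit, so $\coho^*_{\G_{\gamma(i,\bullet)}}(\Omega_{\bfs m^i}(\rowQ{i})) \iso \coho^*(BH_i)$, where $H_i$ is the stabilizer of the normal form $\nu_{\bfs m^i}$. The stabilizer of a representation $\Dirsum_\beta M_\beta^{m_\beta^i}$ of the type $A$ Dynkin quiver $\rowQ{i}$ admits a Levi decomposition $H_i = U_i \rtimes L_i$, where the unipotent radical $U_i$ is built from $\Hom(M_\beta, M_\alpha)$ for $\beta \neq \alpha$ and the Levi factor is $L_i = \prod_{\beta\in\hpri{i}} \GL(m_\beta^i, \C)$. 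Since $U_i$ is contractible, $BH_i \hmtpc BL_i$, and therefore
\[
\cP[\coho^*(BH_i)] = \prod_{\beta \in \hpri{i}} \cP_{m_\beta^i}.
\]
Assembling the three steps yields $\cP_\eta = \prod_{i\in[n]} \prod_{\beta \in \hpri{i}} \cP_{m_\beta^i} = \prod_{\beta \in \hpr} \cP_{m_\beta}$; the vertical case is symmetric. The only nonformal input is the Levi decomposition of the stabilizer, which is a standard fact for type $A$ quivers (and is in fact the sole place where the Dynkin type $A$ hypothesis on each subquiver is used), so this is the step to flag as the crux of the argument.
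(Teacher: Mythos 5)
Your proof is correct, but it organizes the reduction differently from the paper. The paper does not contract the vertical arrows first; instead it reduces the whole stratum $\eta$ to its normal locus $\nu_\eta$ (the product of the row normal forms with the full vertical vector space) via the family version of reduction to normal form, Lemma~\ref{lem:meets.every.orbit}, obtaining $\coho^*_{\G_\gamma}(\eta)\iso\coho^*(B\G_\eta)$ (Proposition~\ref{prop:equivariant.eta.BG.eta}), and then quotes Proposition~\ref{prop:G.eta.hmtpy}, which rests on the cited Lemma~\ref{lem:LF.RR.G.hmtpy} of \cite{lfrr2002.duke}, for the homotopy equivalence $\G_\eta\hmtpc\prod_{\beta\in\hpr}\U(m_\beta)$, finishing with K\"unneth. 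You instead scale the vertical coordinates to zero equivariantly, split $\coho^*_{\G_\gamma}$ row by row by K\"unneth, use the orbit--stabilizer identification $\coho^*_{\G_{\gamma(i,\bullet)}}(\Omega_{\bfs m^i})\iso\coho^*(BH_i)$, and then reprove (rather than cite) the stabilizer structure via the Levi decomposition $H_i=U_i\rtimes L_i$ with $L_i=\prod_\beta\GL(m^i_\beta,\C)$ and $U_i$ unipotent, hence contractible; this is legitimate for Dynkin quivers (the endomorphism algebra of $\Dirsum_\beta M_\beta^{m_\beta}$ is triangular with semisimple quotient $\prod_\beta\Mat_{m_\beta}(\C)$ because the indecomposables are bricks and there are no Hom-cycles), and it is precisely the content of the lemma the paper imports. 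What your route buys is a self-contained, elementary argument for this proposition; what the paper's route buys is machinery that survives the passage to rapid decay cohomology: the same Lemma~\ref{lem:meets.every.orbit} is applied in its relative form in Theorem~\ref{thm:poincare.series.stratum}, whereas your first step breaks there, since the scaling retraction of the vertical arrows does not preserve the sublevel sets $W_\gamma^{-1}(S_t)$ (the superpotential couples adjacent rows exactly through the vertical maps). One small caveat: the crux you flag needs Dynkin-ness (directedness and $\mathrm{End}(M_\beta)=\C$), not type $A$ specifically, and the same structure is of course used implicitly by the paper through \cite{lfrr2002.duke}.
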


We delay the proof of the above proposition until Section \ref{ss:isotropy.subgps}, but illustrate the results of this section with our running example.

\begin{example}
\label{ex:hstrata.vstrata}
Consider the quiver $S$ with dimension vector $\gamma=\left(\begin{smallmatrix} 2&2\\1&1\end{smallmatrix}\right)$. There are $6$ horizontal strata and $4$ vertical strata. The horizontal strata, their associated Poincar\'e series, and codimensions in $\M_\gamma$ are represented in Table \ref{table:hstrata.2211}. A hexagonal array in the first row of the form
\[
\hstrata{k_{11}}{k_{10}}{k_{01}}{\ell_{10}}{\ell_{01}}{\ell_{11}}
\]
represents the stratum corresponding to the Kostant partition $(k_{10},k_{01},k_{11})$ in the top row and $(\ell_{10},\ell_{01},\ell_{11})$ in the bottom row.
With analogous notation, the vertical strata are tabulated in Table \ref{table:vstrata.2211}.

\begin{table}

{\setlength{\extrarowheight}{1em}
	\begin{tabular}{|l|c|c|c|c|c|c|}
	\hline
	$\eta(\bfs{m}^\bullet)$ &
	$\smallhstrata200001$ & $\smallhstrata111001$ & $\smallhstrata200110$ &
	$\smallhstrata111110$ & $\smallhstrata022001$ & $\smallhstrata022110$ \\[1em]
	\hline
	complex comdimension &
	$0$ & $1$ & $1$ & $2$ & $4$ & $5$ \\[.5em]
	\hline
	Poincar\'e series &
	${\curly P_2 \curly P_1}$ & ${\curly P_1^4}$ & ${\curly P_2 \curly P_1^2}$ &
	${\curly P_1^5}$ & ${\curly P_2^2\curly P_1}$ & ${\curly P_2^2\curly P_1^2}$ \\[.5em]
	\hline
	\end{tabular}
}

\caption{Geometric data corresponding to the six horizontal strata from Example \ref{ex:hstrata.vstrata}.}
\label{table:hstrata.2211}
\end{table}

\begin{table}
{\setlength{\extrarowheight}{1em}
	\begin{tabular}{|l|c|c|c|c|}
	\hline
	$\theta(\bfs{m}^\bullet)$ &
	$\smallvstrata110101$ & $\smallvstrata110210$ &
	$\smallvstrata021101$ & $\smallvstrata021210$ \\[1em]
	\hline
	complex codimension &
	$0$ & $2$ & $2$ & $4$ \\[.5em]
	\hline
	Poincar\'e series &
	$\curly P_{1}^{4}$ & ${\curly P_{2}\curly P_{1}^{3}}$ &
	${\curly P_{2}\curly P_{1}^{3}}$ & ${\curly P_{2}^{2}\curly P_{1}^{2}}$\\[.5em]
	\hline
\end{tabular}
}
\caption{Geometric data corresponding to the four vertical strata from Example \ref{ex:hstrata.vstrata}.}
\label{table:vstrata.2211}
\end{table}

\end{example}

\section{Computation of the superpotential trace function on strata}
\label{s:w.strata}

This section is dedicated to understanding the equivariant geometry of the stratum $\eta\subset\M_\gamma$, with the goal of computing Poincar\'e series in rapid decay cohomology. We begin by recalling a result on Dynkin quivers which we will generalize to our setting.

\begin{lem}\cite[Prop. 3.6]{lfrr2002.duke}
\label{lem:LF.RR.G.hmtpy}
Let $\gamma$ be a dimension vector for a Dynkin quiver and $\Phi$ the positive roots of the underlying root system. For each $\G_\gamma$-orbit $\Omega = \Omega_{\bfs{m}}\subset \M_\gamma$ determined by the Kostant partition $\bfs{m}=(m_\beta)_{\beta\in\Phi}\kp\gamma$ (see Section \ref{ss:dynkin.quivers}), let $\G_\Omega$ denote the stabilizer subgroup $\{g\in \G_\gamma : g\cdot x = x\}$, where $x\in\Omega$ is a generic point. Then, there is a homotopy equivalence of groups $\G_\Omega \hmtpc \prod_{\beta \in \Phi} \U(m_\beta)$.\qed
\end{lem}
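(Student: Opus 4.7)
The plan is to identify the stabilizer $\G_\Omega$ with the automorphism group of a generic representation in $\Omega$ and then exploit the structure theory of indecomposable modules over a Dynkin quiver. First, by Gabriel's theorem together with Krull--Schmidt, every point $x\in \Omega_{\bfs m}$ is isomorphic (as a $\C Q$-module) to $\bigoplus_{\beta\in\Phi} M_\beta^{\oplus m_\beta}$. The stabilizer $\G_\Omega = \{g\in\G_\gamma : g\cdot x = x\}$ is, by the base-change action of \eqref{eqn:G.acts.M}, exactly the algebraic group $\mathrm{Aut}_{\C Q}(x)$ of module automorphisms, equivalently the unit group of the finite-dimensional $\C$-algebra $E := \mathrm{End}_{\C Q}(x)$.

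Next I would analyze $E$ using the order on positive roots from \eqref{eqn:rr.pos.roots.order}: choose a total ordering $\beta_1\prec\beta_2\prec\cdots\prec\beta_N$ of $\Phi$ so that $\beta_i\prec\beta_j$ implies $\Hom(M_{\beta_j},M_{\beta_i})=0$. Since every indecomposable over a Dynkin quiver is a brick, $\mathrm{End}(M_\beta) = \C$, so $E$ is block upper-triangular with diagonal blocks $\mathrm{Mat}_{m_{\beta_i}}(\C)$ and off-diagonal blocks $\Hom(M_{\beta_i},M_{\beta_j})\tensor\Hom_\C(\C^{m_{\beta_i}},\C^{m_{\beta_j}})$ for $i<j$. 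Consequently the unit group admits a Levi decomposition
\[
\G_\Omega = \mathrm{Aut}_{\C Q}(x) = U\rtimes L,
\qquad L = \prod_{\beta\in\Phi}\GL(m_\beta,\C),
\]
where $U$ is the unipotent radical (the units in the strictly upper part plus identity), which is a contractible affine space via the truncated exponential on a nilpotent algebra.

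Finally, the homotopy equivalence follows by combining the two standard facts: $U$ deformation retracts to a point, and each $\GL(m_\beta,\C)$ deformation retracts to its maximal compact subgroup $\U(m_\beta)$ via Iwasawa decomposition (or Gram--Schmidt). Composing these equivariantly along the semidirect product gives $\G_\Omega \hmtpc L \hmtpc \prod_{\beta\in\Phi}\U(m_\beta)$, as desired.

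The main obstacle is really the bookkeeping in the second step: one must verify that the ordering of positive roots actually puts $E$ into block upper-triangular form simultaneously with the Krull--Schmidt decomposition of $x$, so that $U$ is genuinely a unipotent normal subgroup with Levi complement $L$. This is precisely the content of \eqref{eqn:rr.pos.roots.order} (the vanishing of $\mathrm{Hom}$ above the diagonal is all that is needed for the group-theoretic Levi decomposition; the dual vanishing of $\mathrm{Ext}$ is unnecessary here), and it is available for Dynkin quivers because the Auslander--Reiten quiver admits such an admissible ordering on the indecomposables.
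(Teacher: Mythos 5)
Your argument is correct, but note that the paper itself does not prove Lemma~\ref{lem:LF.RR.G.hmtpy}: it is imported verbatim from the cited reference \cite[Prop.~3.6]{lfrr2002.duke}, so there is no in-text proof to compare against. What you have written is essentially the standard proof of that cited result: the stabilizer of $x\in\Omega_{\bfs m}$ under the base-change action \eqref{eqn:G.acts.M} is exactly $\mathrm{Aut}_{\C Q}(x)$, i.e.\ the unit group of $E=\mathrm{End}_{\C Q}\bigl(\bigoplus_\beta M_\beta^{\dirsum m_\beta}\bigr)$; since each $M_\beta$ is a brick and the path algebra of a Dynkin quiver is representation-directed, an admissible total order makes $E$ block triangular with semisimple part $\prod_\beta \Mat_{m_\beta}(\C)$ and radical the strictly off-diagonal part, so $E^\times \iso U\rtimes \prod_\beta\GL(m_\beta,\C)$ with $U=1+\mathrm{rad}(E)$ contractible, and Gram--Schmidt finishes the job. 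Two small points of hygiene: your triangularity condition $\Hom(M_{\beta_j},M_{\beta_i})=0$ for $i<j$ is the reverse of the convention in \eqref{eqn:rr.pos.roots.order} (which kills $\Hom(M_{\beta'},M_{\beta''})$ for $\beta'\prec\beta''$), but since only the \emph{existence} of a directed order matters this is harmless; and it is worth saying explicitly why the radical is exactly the strictly off-diagonal part, namely that the diagonal blocks contribute nothing to the radical because $\mathrm{End}(M_\beta)=\C$, while all maps between non-isomorphic indecomposables are radical morphisms and directedness makes that part nilpotent in the triangular picture. With those remarks your write-up would serve as a complete, self-contained proof of the quoted lemma.
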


\begin{remark}
Observe that $\G_\Omega$ is well-defined up to isomorphism type. In particular, if one chooses to stabilize the point $x'\in\Omega$, there must be some $g\in\G_\gamma$ such that $g\cdot x = x'$. This means that the stabilizers of $x$ and $x'$ are conjugate in $\G_\gamma$, and hence isomorphic.
\end{remark}

We wish to extend Lemma \ref{lem:LF.RR.G.hmtpy} and several of its consequences to the setting of strata for the square product. The first step is the proper replacement for the stabilizer subgroup $\G_\Omega$, and the proper replacement of what should be stabilized.

\subsection{Normal loci and isotropy subgroups}
\label{ss:isotropy.subgps}

Let $\bfs{m}^{\bullet}$ be a horizontal Kostant series for the dimension vector $\gamma$ and let $\eta = \eta(\bfs{m}^{\bullet})$ denote the associated horizontal stratum in $\M_{\gamma}$. Recall from Section \ref{s:stratify.repspace} that along each row, the sequence $\bfs{m}^{i}$ defines a Kostant partition for $\gamma(i,\bullet)$.

\begin{defn}
\label{defn:normal.locus}
The \defin{normal locus of} $\eta=\eta(\bfs{m}^\bullet)$ is the subvariety
\[
	\nu_\eta =
		\{ (x_a)_{a\in Q_1} \in \M_\gamma :
 			\text{
			 for each $i\in[n]$ we have $(x_a)_{a\in\rowQ{i}_1} = \nu_{\bfs{m}^i}$
			 }
		\}.
\]
In words, $\nu_\eta$ is the set of quiver representations which have the normal forms $\nu_{\bfs{m}^i}$ from Section \ref{ss:type.A} along rows in $Q$. It is immediate that $\nu_\eta \subset \eta$. We make the analogous definition for vertical strata.
\end{defn}

\begin{defn}
\label{defn:G.eta}
For each horizontal (or vertical) stratum $\eta$, define the \defin{isotropy subgroup} \[
\G_\eta = \{g \in \G_\gamma : g\cdot\nu_\eta \subset \nu_\eta \}.
\]
\end{defn}

\begin{prop}
\label{prop:G.eta.hmtpy}
Suppose that $\eta$ corresponds to the horizontal Kostant series $\bfs{m}^\bullet = (\bfs{m}^i)$. As in Lemma \ref{lem:LF.RR.G.hmtpy}, let $\G_{i} = \G_{\Omega_{\bfs{m}^i}(\rowQ{i})} \subset \G_{\gamma(i,\bullet)}$. Then up to homotopy equivalence we have
\[
\G_\eta = \prod_{i\in[n]} \G_{i} \hmtpc \prod_{i\in[n]} \prod_{\beta\in \hpri{i}} \U(m^i_\beta) = \prod_{\beta\in\hpr} \U(m_\beta).
\]
For a vertical stratum $\theta$ with vertical Kostant series $\bfs{m}^\bullet$ we have $\G_\theta \hmtpc \prod_{\beta\in\vpr}\U(m_\beta)$.
\end{prop}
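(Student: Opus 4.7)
The plan is to reduce the claim to Lemma~\ref{lem:LF.RR.G.hmtpy} by showing that the isotropy subgroup $\G_\eta$ factors as a direct product of the row-wise stabilizers $\G_i$. First I would rewrite the ambient group according to rows of $Q$: since $Q_0 = \{(i,j)\}$, we have the canonical product decomposition
\[
\G_\gamma = \prod_{(i,j)\in Q_0}\GL(\gamma(i,j),\C) = \prod_{i\in[n]} \G_{\gamma(i,\bullet)},
\qquad \G_{\gamma(i,\bullet)} = \prod_{j\in[n']} \GL(\gamma(i,j),\C),
\]
where $\G_{\gamma(i,\bullet)}$ is exactly the group acting on the row-$i$ representation space $\M_{\gamma(i,\bullet)}$ for the type $A_{n'}$ quiver $\rowQ{i}$. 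Correspondingly, write $g\in\G_\gamma$ as $g=(g_i)_{i\in[n]}$ with $g_i\in\G_{\gamma(i,\bullet)}$.

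Next I would unpack the defining condition $g\cdot \nu_\eta \subset \nu_\eta$ on the two types of arrows. By Definition~\ref{defn:normal.locus}, an element $x\in \nu_\eta$ has its horizontal arrows in row $i$ rigidly equal to the normal form $\nu_{\bfs{m}^i}$, while its vertical arrows are completely unconstrained. Under the $\G_\gamma$-action \eqref{eqn:G.acts.M}, the horizontal arrows of $g\cdot x$ in row $i$ depend only on $g_i$ (because the endpoints of horizontal arrows of $\rowQ{i}$ are vertices of row $i$), and must again equal $\nu_{\bfs{m}^i}$; meanwhile, the vertical arrows of $g\cdot x$ can be arbitrary and so impose no new constraint (one checks that the $\G_\gamma$-action carries any tuple of vertical maps to another arbitrary such tuple, so membership in $\nu_\eta$ is unaffected). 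This shows
\[
g\in\G_\eta \iff g_i\cdot \nu_{\bfs{m}^i} = \nu_{\bfs{m}^i} \text{ for every } i\in[n] \iff g_i\in \G_{\Omega_{\bfs{m}^i}(\rowQ{i})} = \G_i.
\]
Hence $\G_\eta = \prod_{i\in[n]}\G_i$ as subgroups of $\G_\gamma$.

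With the honest product decomposition in hand, I would then apply Lemma~\ref{lem:LF.RR.G.hmtpy} to each factor $\G_i$, which is the stabilizer of a representative of the orbit $\Omega_{\bfs{m}^i}(\rowQ{i})$ of the Dynkin quiver $\rowQ{i}$. This gives a homotopy equivalence $\G_i \hmtpc \prod_{\beta\in\hpri{i}}\U(m^i_\beta)$; taking the product of such homotopy equivalences over $i\in[n]$ and using that $\hpr = \bigsqcup_{i\in[n]}\hpri{i}$ (with the convention $m_\beta = m^i_\beta$ for $\beta\in\hpri{i}$) yields the desired chain
\[
\G_\eta \;=\; \prod_{i\in[n]} \G_i \;\hmtpc\; \prod_{i\in[n]}\prod_{\beta\in\hpri{i}}\U(m^i_\beta) \;=\; \prod_{\beta\in\hpr}\U(m_\beta).
\]
The vertical case is symmetric: replace rows by columns, $\rowQ{i}$ by $\colQ{j}$, and $\hpr$ by $\vpr$. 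The only point that requires genuine care (and hence the main obstacle) is the verification that the vertical-arrow freedom in $\nu_\eta$ really imposes no constraint on $g$; this is a direct consequence of the definition of the $\G_\gamma$-action on $\M_\gamma$, but should be spelled out to justify the product decomposition cleanly.
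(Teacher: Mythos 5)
Your argument is correct and follows the same route as the paper: the paper likewise applies Lemma~\ref{lem:LF.RR.G.hmtpy} row by row to the stabilizers of the normal forms $\nu_{\bfs{m}^i}$ and notes that the absence of constraints on vertical arrows yields the direct product $\G_\eta=\prod_{i\in[n]}\G_i$. Your more explicit verification that the condition $g\cdot\nu_\eta\subset\nu_\eta$ splits row by row (and that vertical maps impose no condition) is just a spelled-out version of the one-line justification the paper gives.
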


\begin{proof}
We will prove the horizontal case; the vertical case is analogous. We can apply Lemma~\ref{lem:LF.RR.G.hmtpy} to each horizontal sub-quiver $\rowQ{i}$ with Kostant partition $\bfs{m}^i$, where in each row we use the stabilizer for the normal form $\nu_{\bfs{m}^i}$ to compute the isomorphism type of $\G_{i}$. Since there is no restriction on the maps along vertical arrows in $\eta$, we obtain a direct product of the resulting groups.
\end{proof}

To proceed, we need the following general lemma about equivariant cohomology.

\begin{lem}
\label{lem:meets.every.orbit}
Let the group $G$ act on the space $X$. Suppose that $A \subset X$ is a subspace with isotropy subgroup $G_A  = \{g\in G:g\cdot A = A\}$. Assume that
\begin{itemize}[leftmargin=*]
\item every $G$-orbit in $X$ intersects $A$;
\item if $g\in G$ is such that there exists $a\in A$ with $g\cdot a \in A$, then $g\in G_A$.
\end{itemize}
Then $\coho^*_G(X) \iso \coho^*_{G_A}(A)$.

Moreover, given a $G$-invariant subspace $Z\subset X$, the result also holds for the cohomology of the pair $(X,Z)$; that is, $\coho^*_G(X,Z) \iso \coho^*_{G_A}(A,Z\intersect A)$.
\end{lem}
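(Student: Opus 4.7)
The plan is to show that the two hypotheses together imply $X$ is $G$-equivariantly identified with the balanced product $G \times_{G_A} A$, and then to invoke the standard Borel construction identity.

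First I would define the map
\[
\phi: G \times_{G_A} A \to X, \qquad [g, a] \mapsto g\cdot a,
\]
and check it is a $G$-equivariant bijection. Well-definedness is immediate since for $h \in G_A$ we have $h\cdot a \in A$ and $(gh^{-1})\cdot(h\cdot a) = g\cdot a$. Surjectivity is precisely the first hypothesis. For injectivity, suppose $g_1\cdot a_1 = g_2\cdot a_2$ with $a_1, a_2 \in A$; then $g := g_2^{-1}g_1$ satisfies $g\cdot a_1 = a_2 \in A$, so the second hypothesis forces $g \in G_A$, and hence $[g_1, a_1] = [g_2g, a_1] = [g_2, g\cdot a_1] = [g_2, a_2]$ in $G \times_{G_A} A$.

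Next I would invoke the standard identification of Borel constructions: for any subgroup $H \subset G$ and any $H$-space $Y$,
\[
EG \times_G (G \times_H Y) \homeo EG \times_H Y,
\]
via $[e, [g, y]] \mapsto [eg, y]$. Since $EG$ is contractible and $G_A \subset G$ acts freely on it, $EG$ also serves as a model for $EG_A$. Applying this with $H = G_A$ and $Y = A$, and using the identification $X \iso G \times_{G_A} A$ from the previous step, we obtain
\[
\coho^*_G(X) = \coho^*(EG \times_G X) \iso \coho^*(EG \times_{G_A} A) = \coho^*_{G_A}(A).
\]
For the pair version, I would observe that $Z \intersect A$ is $G_A$-invariant and that every $G$-orbit in $Z$ meets $Z \intersect A$: indeed, such an orbit is a $G$-orbit in $X$, which by the first hypothesis meets $A$, and hence meets $Z \intersect A$ because the orbit lies inside $Z$. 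The same argument then identifies $Z$ with $G \times_{G_A} (Z \intersect A)$ under $\phi$, and passing to Borel constructions yields $\coho^*_G(X, Z) \iso \coho^*_{G_A}(A, Z \intersect A)$.

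The step I expect to be the main technical obstacle is verifying that the set-theoretic bijection $\phi$ is actually a homeomorphism, since this is what is needed in order to pass from a bijection of sets to an isomorphism in equivariant cohomology. In the algebraic/manifold setting of the paper this follows by exhibiting $G \times A \to X$ as a principal $G_A$-bundle (whose very existence encodes both hypotheses), so that $\phi$ becomes an isomorphism of locally trivial fibrations; in a fully general topological framework one would typically need to impose properness on the $G_A$-action, or else establish the cohomology isomorphism directly at the level of Borel constructions without first establishing a homeomorphism of the underlying spaces.
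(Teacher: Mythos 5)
Your proposal is correct and is essentially the paper's own argument: the composite of your two identifications ($X \iso G\times_{G_A} A$ followed by $EG\times_G(G\times_{G_A}A)\homeo EG\times_{G_A}A$) is exactly the map $EG\times_{G_A}A \to EG\times_G X$, $[(e,a)]\mapsto[(e,a)]$, which the paper shows is bijective directly from the two hypotheses, and your treatment of the relative case matches as well. Your closing caveat about upgrading the set-theoretic bijection to a homeomorphism (or working directly at the level of Borel constructions) is a fair point that the paper itself passes over silently, and it is harmless in the algebraic situations where the lemma is applied.
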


\begin{proof}
Let $EG$ be a contractible space with a free $G$ action. The two conditions of the lemma implies that the map $EG \times_{G_A} A \to EG \times_G X$, defined by $[(e,a)]\mapsto [(e,a)]$ is bijective. Then $\coho^*_{G_A}(A) = \coho^*(EG\times_{G_A} A)=\coho^*(EG\times_G X)=\coho^*_G(X)$. The relative result follows similarly.
\end{proof}

\begin{prop}
\label{prop:equivariant.eta.BG.eta}
$\coho^*_{\G_\gamma}(\eta)$ is isomorphic to $\coho^*(B\G_\eta)$.
\end{prop}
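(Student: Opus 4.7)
The plan is to apply Lemma \ref{lem:meets.every.orbit} with $X = \eta$, $G = \G_\gamma$, $A = \nu_\eta$, and $G_A = \G_\eta$, and then to combine its conclusion with the contractibility of $\nu_\eta$. I will focus on a horizontal stratum; the vertical case is entirely analogous.

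First I would verify the two hypotheses of Lemma \ref{lem:meets.every.orbit}. For the first hypothesis, that every $\G_\gamma$-orbit in $\eta$ meets $\nu_\eta$, I would use that $\G_\gamma = \prod_{(i,j)} \GL(\gamma(i,j),\C)$ decomposes as a product along rows: the row-subgroup $\G_{\gamma(i,\bullet)} = \prod_{j\in[n^\prime]} \GL(\gamma(i,j),\C)$ acts on the horizontal maps in $\rowQ{i}$ independently from the other rows. Given $x \in \eta$, by definition $(x_a)_{a\in \rowQ{i}_1}$ lies in $\Omega_{\bfs{m}^i}(\rowQ{i})$, so there exists $h_i \in \G_{\gamma(i,\bullet)}$ sending this horizontal piece to the normal form $\nu_{\bfs{m}^i}$ of Section \ref{ss:type.A}. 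Assembling the $h_i$ yields $h\in\G_\gamma$ with $h\cdot x \in \nu_\eta$; the vertical components are altered in an arbitrary way, but this is irrelevant because $\nu_\eta$ places no constraint on the vertical arrows. For the second hypothesis, suppose $a \in \nu_\eta$ and $g\cdot a \in \nu_\eta$. Both $a$ and $g\cdot a$ have the same horizontal piece, namely the tuple of normal forms $(\nu_{\bfs{m}^i})_{i\in[n]}$; hence $g$ stabilizes that horizontal piece. Because $\nu_\eta$ is precisely the set of representations with this prescribed horizontal piece (vertical arrows free), for \emph{any} $a^\prime \in \nu_\eta$ we have $(g\cdot a^\prime)_{\mathrm{hor}} = (a^\prime)_{\mathrm{hor}}$, and so $g\cdot \nu_\eta \subset \nu_\eta$, i.e.\ $g\in \G_\eta$.

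Once the two hypotheses are verified, Lemma \ref{lem:meets.every.orbit} gives
\[
\coho^*_{\G_\gamma}(\eta) \iso \coho^*_{\G_\eta}(\nu_\eta).
\]
Next I would observe that $\nu_\eta$ is a complex affine space: fixing the horizontal maps to their normal forms leaves the vertical arrows completely unconstrained, so as a variety $\nu_\eta \iso \bigoplus_{a \text{ vertical}} \Hom(\C^{\gamma(t(a))},\C^{\gamma(h(a))})$, which is contractible. The Borel construction then gives a fibration $E\G_\eta \times_{\G_\eta} \nu_\eta \to B\G_\eta$ with contractible fiber $\nu_\eta$, hence a homotopy equivalence, yielding $\coho^*_{\G_\eta}(\nu_\eta) \iso \coho^*(B\G_\eta)$. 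Combining with the previous isomorphism finishes the proof.

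The main obstacle I expect is writing cleanly the second hypothesis of Lemma \ref{lem:meets.every.orbit}: one must carefully distinguish ``stabilizing a particular element of $\nu_\eta$'' from ``stabilizing the subset $\nu_\eta$ setwise,'' and see that for this stratum (where $\nu_\eta$ differs from a single point only in the free vertical directions) the two conditions coincide. Everything else — the row-by-row normalization, the affinity of $\nu_\eta$, and the standard Borel-construction argument — is routine once the setup is in place.
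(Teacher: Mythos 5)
Your proposal is correct and follows exactly the paper's route: apply Lemma \ref{lem:meets.every.orbit} with $X=\eta$, $A=\nu_\eta$, $G_A=\G_\eta$ to get $\coho^*_{\G_\gamma}(\eta)\iso\coho^*_{\G_\eta}(\nu_\eta)$, then use that $\nu_\eta$ is a vector space (hence equivariantly contractible) to identify this with $\coho^*(B\G_\eta)$. The only difference is that you spell out the verification of the lemma's two hypotheses, which the paper merely asserts; your verification is sound.
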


\begin{proof}
We have
\[
\coho^*_{\G_\gamma}(\eta)=\coho^*_{\G_\eta}(\nu_\eta) = \coho^*_{\G_\eta}(\hbox{pt})=\coho^*(BG_\eta).
\]
The first equality follows from the observation that for $G=\G_\gamma, X=\eta, A=\nu_\eta$ we have $G_A=\G_\eta$, and that the two conditions of Lemma \ref{lem:meets.every.orbit} are satisfied. The second equality follows because $\nu_\eta$ is a vector space, hence it is (equivariantly) contractible.
\end{proof}

Now, we can establish the claim made in Proposition \ref{prop:poincare.eta}.

\begin{proof}[Proof of Proposition \ref{prop:poincare.eta}]
We have the following sequence of isomorphisms (with tensor products taken over $\R$)
\[
\coho^*_{\G_\gamma}(\eta) \to \coho^*(B\G_\eta) \to \Tensor_{\beta\in\hpr} \coho^*(B\U(m_\beta)).
\]
The first is the result of Proposition \ref{prop:equivariant.eta.BG.eta}. The second follows from using that $B(G\times H) \homeo BG\times BH$ for any groups $G$ and $H$, and then applying Proposition \ref{prop:G.eta.hmtpy} along with the K\"unneth isomorphism. The Poincar\'e series of the last term is $\prod_{\beta\in\hpr} \cP_{m_\beta}$ as desired.
\end{proof}

\subsection{Homotopy of normal loci}
\label{ss:hmtpy.norm.loci}

Recall the cycles $p_{ij}$ defined in Section \ref{ss:not.Q}. Let $\phi_{ij}$ denote the composition $\phi_{p_{ij}}$.

\begin{defn}
For $W$ defined in Section \ref{ss:not.Q}, Equation \eqref{eqn:W.for.Q}, we obtain the regular function
\begin{equation}
\label{eqn:w.gamma.defn}
W_\gamma : \M_{\gamma}\to \C
\text{~given by~}
(\phi_{a})_{a\in Q_{1}} \longmapsto -\sum_{i=1}^{n-1}\sum_{j=1}^{n^\prime-1} \tr(\phi_{ij}).
\end{equation}
We call $W_\gamma$ the \defin{superpotential trace function} of $Q$.
\end{defn}

This is consistent with the choice made in Examples \ref{ex:W.S} and \ref{ex:trace.function.S}. The next step is to compute the superpotential trace function restricted to a stratum. We will focus on horizontal strata, but all the results of this section apply analogously for vertical strata.

Set $N' = n^\prime(n^\prime+1)/2$ and order the positive roots of $A_{n^\prime}$ lexicographically
\begin{equation}
\label{eqn:list.lex.roots}
\beta_1,\beta_2,\ldots,\beta_{N'}
\end{equation}
as in \eqref{eqn:lex.order.roots}. Define $r(a,b):=\floor{\delta(\beta_a,\beta_b)/2}$.

\begin{thm}
\label{thm:our.hmtpy}
Let $\eta$ be a horizontal stratum with horizontal Kostant series $\bfs{m}^\bullet$, and let $P=\nu_\eta \cap W_\gamma^{-1}(S_t)$ for $t<0$ (recall the notation of Section \ref{ss:RDC.defn} regarding rapid decay cohomology). Let
\begin{equation}
\label{eqn:c.eta}
c_\eta = \sum_{i\in [n-1]}\left(\sum_{a,b \in [{N'}]} r(a,b)\,m^{i}_{\beta_a}\,m^{i+1}_{\beta_b}\right)
\end{equation}
There is a $\G_\eta$-equivariant homotopy of pairs
\begin{equation}
\label{eqn:our.hmtpy}
(\nu_\eta,P) \longrightarrow (\R^{2c_\eta},\R^{2c_\eta}-B^{2c_\eta})
\end{equation}
where $B^d$ denotes the $d$-dimensional ball in $\R^d$.
\end{thm}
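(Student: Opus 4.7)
The plan is to restrict $W_\gamma$ to $\nu_\eta$, recognize this restriction as a $\G_\eta$-invariant complex quadratic form whose real part has signature containing a $2c_\eta$-dimensional negative-definite subspace, and then carry out a standard equivariant Morse-theoretic deformation retraction onto it.

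First, I will observe that $W_\gamma|_{\nu_\eta}$ is a complex quadratic form. Every $\phi\in\nu_\eta$ has its horizontal arrows frozen at the normal forms $\nu_{\bfs m^i}$, while each plaquette $p_{ij}$ of $W$ consists of exactly two horizontal and two vertical arrows; hence $\tr(\phi_{p_{ij}})$ is bilinear in the two vertical arrows of the plaquette, making $W_\gamma|_{\nu_\eta}$ a $\G_\eta$-invariant complex quadratic form on the complex vector space $\nu_\eta$. Next, I will use the indecomposable-summand structure of the horizontal normal forms to split this form. In row $i$, the normal form is a direct sum of $m^i_{\beta_a}$ copies of $M_{\beta_a}$ for each $\beta_a\in\hpri{i}$, so each vertical arrow acquires a canonical, $\G_\eta$-equivariant block decomposition indexed by pairs $(\beta_a,\beta_b)\in\hpri{i}\times\hpri{i+1}$, with blocks at column $j$ present exactly when $j\in\mathrm{supp}(\beta_a)\cap\mathrm{supp}(\beta_b)$. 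Correspondingly, $W_\gamma|_{\nu_\eta}$ decomposes as a direct sum over triples $(i,\beta_a,\beta_b)\in[n-1]\times\hpri{i}\times\hpri{i+1}$ of ``chain'' bilinear forms supported on the blocks indexed by overlap columns $j\in[k(\beta_a,\beta_b),\ell(\beta_a,\beta_b)]$.

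The heart of the argument is the analysis of each chain summand. Pairings between blocks in adjacent columns $j$ and $j+1$ come from the single plaquette $p_{ij}$ joining them, and up to signs coming from the plaquette orientations each pairing is a standard nondegenerate trace pairing. The resulting block tridiagonal form is (up to scaling) the adjacency matrix of a path graph on $\delta(\beta_a,\beta_b)+1$ vertices, and a direct linear-algebra computation identifies the negative-definite part of its real part as contributing $r(a,b)\cdot m^i_{\beta_a}\,m^{i+1}_{\beta_b}$ real dimensions. Summing over all triples produces a $\G_\eta$-invariant orthogonal decomposition $\nu_\eta = N_-\oplus N_+\oplus K$ with $\dim_\R N_- = \dim_\R N_+ = 2c_\eta$ and $K = \ker(W_\gamma|_{\nu_\eta})$, on which $\Re W_\gamma$ is respectively negative definite, positive definite, and zero. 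The required homotopy of pairs is then built by linearly contracting $N_+\oplus K$ to the origin while, on $N_-\cong\R^{2c_\eta}$, deformation-retracting the sublevel set $\{\Re W_\gamma\le t\}$ onto the complement of a large ball; both steps are equivariant for the factorwise $\G_\eta$-action.

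The main anticipated obstacle is the sign bookkeeping in the chain analysis: the plaquette orientations in $Q$ alternate in a nonuniform pattern, and one must verify that this does not cause accidental cancellations that would lower the rank of the tridiagonal form below the path-graph expectation. The $\G_\eta$-equivariance of each of $N_-$, $N_+$, and $K$ should follow from the fact that $\G_\eta$ acts on the blocks through tensor products of general linear factors that preserve each bilinear pairing; but making this precise in the real setting (where one effectively needs to work with a maximal compact subgroup of $\G_\eta$ to see a compatible real structure) will also require some care.
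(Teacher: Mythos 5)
Your proposal follows essentially the same route as the paper's proof: the same block decomposition of the vertical arrows according to the indecomposable summands of the horizontal normal forms, the same signature analysis of the resulting tridiagonal ``path'' form (which is the paper's Lemma \ref{lem:herm.form}, appearing there as a Hermitian form because upward arrows are parameterized by adjoints), and the same $\G_\eta$-equivariant linear contraction of the part of $\nu_\eta$ where $\Re W_\gamma$ is non-negative, with the sublevel set landing on the complement of a ball in the negative-definite subspace. One small wording slip: each chain summand contributes $r(a,b)\,m^{i}_{\beta_a}m^{i+1}_{\beta_b}$ \emph{complex} dimensions (i.e.\ $2\,r(a,b)\,m^{i}_{\beta_a}m^{i+1}_{\beta_b}$ real dimensions) to $N_-$, which is what makes your later claim $\dim_\R N_-=2c_\eta$ correct.
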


The goal of the remainder of this subsection is to prove the above theorem. We begin with a lemma regarding linear algebra.

\begin{lem}
\label{lem:herm.form}
Consider the Hermitian form on $\C^p$
\begin{equation}
\mathfrak{h}(x_1,\ldots,x_p) = \sum_{i=1}^{p-1}(x_i^* x_{i+1} + x_{i+1}^* x_i)
\end{equation}
where $z^*$ denotes the complex conjugate of $z$. Set $r=\floor{p/2}$. There is a linear change of coordinates $(x_1,\ldots,x_p) \to (\xi_1,\ldots,\xi_p)$ on $\C^p$ such that $\mathfrak{h}$ takes the form
\[
\sum_{k=1}^{r} |\xi_k|^2 - \sum_{l=r+1}^{2r} |\xi_l|^2.
\]
\end{lem}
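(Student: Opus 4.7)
The plan is to recognize this as a statement about the signature of a Hermitian form and invoke Sylvester's law of inertia, reducing everything to a straightforward eigenvalue count.

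First I would observe that $\mathfrak{h}$ is the Hermitian form on $\C^p$ whose matrix in the standard basis is the real symmetric $p\times p$ tridiagonal matrix
\[
M \;=\; \begin{pmatrix} 0 & 1 & & & \\ 1 & 0 & 1 & & \\ & 1 & 0 & \ddots & \\ & & \ddots & \ddots & 1 \\ & & & 1 & 0 \end{pmatrix},
\]
since $\mathfrak{h}(x)=x^*Mx$. Sylvester's law of inertia (applied to Hermitian forms) says there is a linear change of coordinates $x = A\xi$ on $\C^p$ so that $A^*MA$ is diagonal with entries $+1$, $-1$, $0$, where the number of each equals the number of positive, negative, and zero eigenvalues of $M$ respectively. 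Thus the lemma reduces to showing that $M$ has exactly $r=\lfloor p/2\rfloor$ positive eigenvalues, $r$ negative eigenvalues, and $p-2r \in \{0,1\}$ zero eigenvalues.

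To finish I would just quote (or verify in one line) the classical fact that the eigenvalues of $M$ are
\[
\mu_k = 2\cos\!\left(\tfrac{k\pi}{p+1}\right), \qquad k = 1, 2, \ldots, p,
\]
with corresponding eigenvector whose $j$-th entry is $\sin(jk\pi/(p+1))$; this follows directly from the identity $\sin((j{+}1)\theta) + \sin((j{-}1)\theta) = 2\sin(j\theta)\cos\theta$ together with the boundary conditions $\sin(0) = \sin((p{+}1)k\pi/(p{+}1)) = 0$. Now $\mu_k > 0 \iff k < (p{+}1)/2$, $\mu_k < 0 \iff k > (p{+}1)/2$, and $\mu_k = 0$ is possible only when $p$ is odd and $k=(p{+}1)/2$. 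Counting gives exactly $r$ positive eigenvalues, $r$ negative eigenvalues, and $p-2r$ zero eigenvalues, completing the proof.

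There is no real obstacle: the only input beyond bookkeeping is the eigenvalue formula for the $0/1$ tridiagonal matrix, which is well known (it is, up to sign, the adjacency matrix of a path graph, and its characteristic polynomial is a Chebyshev polynomial of the second kind). One could alternatively give an inductive diagonalization by completing the square, but the spectral route is cleaner and more self-contained.
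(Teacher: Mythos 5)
Your argument is correct and is essentially the paper's proof: both represent $\mathfrak{h}$ by the tridiagonal matrix with constant off-diagonal entries (the paper uses $1/2$'s rather than $1$'s, a harmless rescaling), identify the eigenvalues as proportional to $\cos\bigl(k\pi/(p+1)\bigr)$ via the same sum-to-product sine identity, and count $r$ positive, $r$ negative, and $p-2r$ zero eigenvalues. Your explicit appeal to Sylvester's law of inertia merely makes precise the diagonalization step the paper leaves implicit.
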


\begin{proof}
The form $\mathfrak{h}$ is represented by the $p\times p$ matrix with $1/2$ on the first sub- and super-diagonals, and zeroes everywhere else. Using the ``sum to product'' trigonometric identities, one can verify that the set of eigenvalues for this matrix is $\{\cos(a\pi/(p+1)):a\in [p]\}$. In particular, there are $r$ distinct positive eigenvalues, $r$ distinct negative eigenvalues (in fact the negatives of the positive eigenvalues), and zero occurs as an eigenvalue (with multiplicity one) if and only if $p$ is odd.
\end{proof}

For brevity below we will use the short hand notations $\delta(a,b):=\delta(\beta_a,\beta_b)$, $k(a,b):= k(\beta_a,\beta_b)$, and $\ell(a,b) := \ell(\beta_a,\beta_b)$ (refer to Definition \ref{defn:pos.rts.intersect}).

\begin{proof}[Proof of Theorem \ref{thm:our.hmtpy}]
Because the function $W_\gamma$ effects only adjacent quiver rows, it suffices to prove the result for $n=2$, i.e.~for the quiver $A_2\square A_{n^\prime}$.

Let $\gamma$ denote the dimension vector such that $\bfs{m}^\bullet\kp\gamma$. At each quiver vertex $(i,j)$ let $V_{i,j}$ denote the associated complex $\gamma(i,j)$-dimensional vector space. The formation of the normal locus $\nu_\eta$ amounts to choosing certain bases at each vertex, and hence we may write
\[
\nu_\eta = \prod_{j\in[n^\prime]} \Mat_{\gamma(2,j)\times\gamma(1,j)}(\C)
\]
where
\[
\bfs{X}=(X_1,\ldots,X_{n^\prime}) \in \prod_{j\in[n^\prime]}\Mat_{\gamma(2,j)\times\gamma(1,j)}(\C)
\]
is realized as an element of $\nu_\eta$ by thinking of $X_j$ as the linear map $V_{1,j}\to V_{2,j}$ for a downward pointing arrow $(1,j)\to(2,j)$ and $X_j^\dag$ as the linear map $V_{2,j}\to V_{1,j}$ for an upward pointing arrow $(2,j)\to(1,j)$. Recall that by definition of $\nu_\eta$, the maps along horizontal arrows are determined by the normal forms of Section \ref{ss:type.A}. Without loss of generality, we henceforth assume that vertical arrows connecting $(1,j)$ and $(2,j)$ point downward when $j$ is odd (and thus upward when $j$ is even). A special case of these coordinates is illustrated in Figure \ref{fig:hmtpy.ex}.

\begin{figure}
\begin{tikzpicture}
\node (11) at (0,1.5) {$\C^k$};
\node (12) at (1.5,1.5) {$\C^k$};
\node (13) at (3,1.5) {$\C^k$};
\node (1m-1) at (4.5,1.5) {$\C^k$};
\node (1m) at (6,1.5) {$\C^k$};

\node (21) at (0,0) {$\C^\ell$};
\node (22) at (1.5,0) {$\C^\ell$};
\node (23) at (3,0) {$\C^\ell$};
\node (2m-1) at (4.5,0) {$\C^\ell$};
\node (2m) at (6,0) {$\C^\ell$};

\node at (3.75,.75) {$\cdots$};

\draw[->] (12) -- (11);
\draw[->] (12) -- (13);
\draw[->] (1m-1) -- (1m);
\draw[->] (21) -- (22);
\draw[->] (23) -- (22);
\draw[->] (2m) -- (2m-1);
\draw[->] (11) -- (21);
\draw[->] (22) -- (12);
\draw[->] (13) -- (23);
\draw[->] (2m-1) -- (1m-1);
\draw[->] (1m) -- (2m);

\node at (-.25,.75) {$X_1$};
\node at (1.25,.75) {$X_2^\dag$};
\node at (2.75,.75) {$X_3$};
\node at (5,.75) {$X_{n^\prime-1}^\dag$};
\node at (6.3,.75) {$X_{n^\prime}$};

\node at (0.75,-.25) {id};
\node at (2.25,-.25) {id};
\node at (5.25,-.25) {id};
\node at (0.75,1.75) {id};
\node at (2.25,1.75) {id};
\node at (5.25,1.75) {id};

\end{tikzpicture}
\caption{Coordinates on the normal locus as described in the proof of Theorem \ref{thm:our.hmtpy} for $A_2\square A_{n^\prime}$ and the horizontal stratum corresponding to a multiplicity of $k$ for the longest root in the first row, $\ell$ in the second row, and all other horizontal roots having multiplicity zero.}
\label{fig:hmtpy.ex}
\end{figure}

For each $a\in[{N'}]$ and $\beta_a$ as in Equation \eqref{eqn:list.lex.roots}, let $k(a)$ and $\ell(a)$ be defined by $\beta_a = \beta_{k(a),\ell(a)}$ per the notation of Section \ref{ss:type.A}. The choosing of bases inherent in forming $\nu_\eta$ means that the spaces $V_{i,j}$ decompose as
\begin{equation}
\label{eqn:Vij.decompose}
V_{i,j} = \Dirsum_{a\in[{N'}]} V_{i,j}(a)
\end{equation}
with $V_{i,j}(a)$ consisting of the vectors corresponding to the root $\beta_a$. That is
\[
\dim_\C V_{i,j}(a) = \begin{cases}
m^i_{\beta_a} &\text{if~} j\in[k(a),\ell(a)] \\
0	&\text{otherwise}
\end{cases}.
\]
Set $d(i,j,a) = \dim_\C V_{i,j}(a)$. We can think of each matrix coordinate $X_j$ decomposed into blocks $X_j = \left(X_j(a,b)\right)_{a,b=1}^{N'}$, where if $j$ is odd we have
\[
X_j(a,b) : V_{1,j}(a) \to V_{2,j}(b),
\]
and if $j$ is even we have
\[
X_j(a,b)^\dag : V_{2,j}(b) \to V_{1,j}(a).
\]
Note that $X_j(a,b)$ is a ``zero by zero'' matrix whenever $\beta_a \intersect \beta_b$ has length zero or is empty. By Proposition \ref{prop:G.eta.hmtpy}, the isotropy subgroup $\G_\eta$ is homotopy equivalent to $\prod_{i=1}^2\prod_{a\in[{N'}]} \U(m^i_{\beta_a})$ and we write an element of this group as $(U_{i,a})$ where $i$ runs over the rows of the quiver and $a$ corresponds to the root $\beta_a$; i.e.~$U_{i,a} \in \U(m^i_{\beta_a})$. With these conventions, the action on $\nu_\eta = \{X_j(a,b):j\in[n^\prime],a\in[{N'}],b\in[{N'}]\}$ is
\begin{equation}
\label{eqn:Geta.act.X.coords}
(U_{i,a})_{i\in[2],a\in[{N'}]}\cdot X_j(a,b) = U_{2,b} X_j(a,b) U_{1,a}^\dag.
\end{equation}
Observe that this is compatible with Equation \eqref{eqn:G.acts.M}. We have that
\[
W_\gamma(\bfs{X}) =  -\tr\left(
 X_2^\dag X_1 + X_2^\dag X_3 + X_4^\dag X_3 + \cdots
 \right).
\]
The above sum is finite, but whether the last term is $X^\dag_{n^\prime} X_{n^\prime-1}$ or $X^\dag_{n^\prime-1}X_{n^\prime}$ depends on if $n^\prime$ is even or odd. Let $\til{W} = -\Re[W_\gamma]$ and recall that $P$ is defined to the be set where $\til{W}>-t$. Observe that
\begin{equation}
\label{eqn:tilde.W}
\begin{aligned}
\til{W}(\bfs{X}) &= \frac{1}{2}\sum_{j=1}^{n^\prime-1}
  \tr\left(X_j^\dag X_{j+1} + X_{j+1}^\dag X_j\right) \\
  &= \frac{1}{2} \sum_{a,b\in[{N'}]}\left[ \sum_{j=1}^{n^\prime-1}
  \tr\left(X_j(a,b)^\dag X_{j+1}(a,b) + X_{j+1}(a,b)^\dag X_j(a,b)\right)
  \right].
\end{aligned}
\end{equation}
For each choice of $j,a,b$ such that $d(1,j,a)$ and $d(2,j,b)$ are both nonzero (or equivalently $k(a,b) < \ell(a,b)$ and $X_j(a,b)$ is not an ``empty'' matrix), observe that we have $\ell(a,b) - k(a,b) + 1 = \delta(a,b)$. Since in this case $j\in[k(a),\ell(a)]\intersect[k(b),\ell(b)]$ we actually obtain $d(1,j,a) = m^1_{\beta_a}$ and $d(2,j,b) = m^2_{\beta_b}$ are independent of $j$, and so for the rest of the proof let $d_a$ and $d_b$ denote these numbers respectively. Thus we can write coordinates for the matrices $X_j(a,b)$ as below
\[
X_j(a,b) = \left(x_{e,f}(j,a,b)\right)_{e\in[d_b],f \in [d_a]}
\]
where each $x_{e,f}(j,a,b)$ is an affine coordinate for $\C$. Now, fix $a,b\in[{N'}]$ and consider
\begin{equation}
\label{eqn:fix.ab.trace.x}
\begin{aligned}
&\sum_{j=1}^{n^\prime-1}
	\tr\left(X_j(a,b)^\dag X_{j+1}(a,b) + X_{j+1}(a,b)^\dag X_j(a,b)\right) \\
=&\sum_{v=k(a,b)}^{\ell(a,b)-1}
	\tr\left(X_v(a,b)^\dag X_{v+1}(a,b) + X_{v+1}(a,b)^\dag X_v(a,b)\right) \\
=&\sum_{v}\left[
	\tr\left(X_v(a,b)^\dag X_{v+1}(a,b)\right) + \tr\left(X_{v+1}(a,b)^\dag X_v(a,b)\right)
	\right] \\
=&\sum_{v}\left[
	\sum_{e=1}^{d_b} \sum_{f=1}^{d_a}
	\left( x_{e,f}(v,a,b)^* x_{e,f}(v+1,a,b) + x_{e,f}(v+1,a,b)^* x_{e,f}(v,a,b) \right)
	\right] \\
=& \sum_{e,f} \,
	\mathfrak{h}\left(x_{e,f}(k(a,b),a,b),x_{e,f}(k(a,b)+1,a,b),\ldots,x_{e,f}(\ell(a,b),a,b) \right)
\end{aligned}
\end{equation}
where $\mathfrak{h}$ is the Hermitian form of Lemma \ref{lem:herm.form}. Thus we obtain a linear change of coordinates, which we will denote by $R(a,b)$, of the form \[\left(x_{e,f}(v,a,b)\right)_{v=k(a,b)}^{\ell(a,b)} \stackrel{R(a,b)}{\longrightarrow} \left(\xi_{e,f}(u,a,b)\right)_{u=1}^{\delta(a,b)}\] such that the last line of Equation \eqref{eqn:fix.ab.trace.x} becomes
\begin{equation}
\label{eqn:fix.ab.trace.xi}
\sum_{e,f} \left[
	\sum_{u = 1}^{r(a,b)} |\xi_{e,f}(u,a,b)|^2 - \sum_{u=r(a,b)+1}^{2r(a,b)} |\xi_{e,f}(u,a,b)|^2\right].
\end{equation}
In fact we can define a change coordinates on $\nu_\eta$ by \[\bfs{X}=(X_1,X_2,\ldots,X_{n^\prime}) \longmapsto \bfs{\Xi}=(\Xi_1,\Xi_2,\ldots,\Xi_{n^\prime})\] where the $\gamma(2,j)\times \gamma(1,j)$ complex matrix $\Xi_j$ is a block matrix
\[
\Xi_j = \left( \Xi_j(a,b) \right)_{a,b=1}^{N'}
\]
and the matrices $\Xi_j(a,b)$ are given in coordinates by
\[
\Xi_j(a,b) = \left( \xi_{e,f}(j,a,b) \right)_{e\in[d_b],f\in[d_a]}.
\]
Explicitly, the change of matrix coordinates is also given in terms of matrices by
\[R(a,b)\left( X_{k(a,b)}(a,b),\ldots,X_{\ell(a,b)}(a,b) \right) = \left( \Xi_1(a,b),\ldots,\Xi_{\delta(a,b)}(a,b) \right)\]
for each choice of $a$ and $b$. Since this change is linear, the $\G_\eta$ action on the new coordinates is analogous to Equation \eqref{eqn:Geta.act.X.coords}, i.e.
\begin{equation}
\label{eqn:Geta.act.Xi.coords}
(U_{i,a})_{i\in[2],a\in[{N'}]}\cdot \Xi_j(a,b) = U_{2,b} \Xi_j(a,b) U_{1,a}^\dag.
\end{equation}
Finally, we can define a homotopy $H:\nu_\eta \times [0,1] \to \nu_\eta$ in the new coordinates by
\begin{equation}
\label{eqn:H.defn}
\left( \xi_{e,f}(u,a,b) , s \right) \longmapsto \begin{cases}
\xi_{e,f}(u,a,b), &\text{if~} u\leq r(a,b) \\
(1-s)\xi_{e,f}(u,a,b), &\text{if~}u>r(a,b)
\end{cases}.
\end{equation}
This further implies the analogous formula on the level of matrices
\[
(\Xi_u(a,b) , s) \longmapsto \begin{cases}
\Xi_u(a,b), &\text{if~} u\leq r(a,b) \\
(1-s)\Xi_u(a,b), &\text{if~}u>r(a,b)
\end{cases}.
\]
This together with Equation \eqref{eqn:Geta.act.Xi.coords} implies that $H(g\cdot \bfs{\Xi},s) = g\cdot H(\bfs{\Xi},s)$ for any $g\in \G_\eta$, $\bfs{\Xi}\in\nu_\eta$, and $s\in[0,1]$. That is, the map $H$ is an equivariant homotopy. Observe that $H(-,0)$ is the identity map and that the image of $H(-,1)$ is isomorphic to a complex vector space with coordinates
\[
\{ \xi_{e,f}(u,a,b) \in \C : a,b\in[{N'}], u\leq r(a,b), e\in[d_b], f\in[d_a] \}.
\]
Therefore this image has complex dimension $\sum_{a,b} r(a,b) d_b d_a = c_\eta$. Taking real and imaginary parts of each $\xi_{e,f}(u,a,b)$ above identifies the image of $H(-,1)$ with $\R^{2c_\eta}$.

To see that $H$ is a homotopy \emph{of pairs}, we verify that $H(P,s) \subset P$ for all $s$. Equations \eqref{eqn:fix.ab.trace.xi} and \eqref{eqn:H.defn} imply that
\begin{equation}
\label{eqn:tilW.H.Xi.s}
\til{W} =
\sum_{a,b}\left[
\sum_{u=1}^{r(a,b)}
	\tr\left(\Xi_u(a,b)^\dag \Xi_u(a,b)\right)
-(1-s)^2 \sum_{u=r(a,b)+1}^{2r(a,b)} \tr\left(\Xi_u(a,b)^\dag\Xi_u(a,b) \right)
\right].
\end{equation}
For any $\bfs{\Xi}\in P$, we have that
\[
-t \leq \til{W} = \sum_{a,b}\left[
\sum_{u=1}^{r(a,b)}
	\tr\left(\Xi_u(a,b)^\dag \Xi_u(a,b)\right)
-  \sum_{u=r(a,b)+1}^{2r(a,b)} \tr\left(\Xi_u(a,b)^\dag\Xi_u(a,b) \right)
\right],
\]
and so \eqref{eqn:tilW.H.Xi.s} implies also that $\til{W}|_{H(P,s)}\geq-t$ for all $s$. Hence $H(\bfs\Xi,s)$ is also in $P$. Moreover, when $s=1$ the expression \eqref{eqn:tilW.H.Xi.s} becomes
\[
\begin{aligned}
&\sum_{a,b}\left[\sum_{u\leq r(a,b)}
	\tr\left(\Xi_u(a,b)^\dag \Xi_u(a,b)\right)\right] \\
= &\sum_{a,b}\sum_{u\leq r(a,b)} \sum_{e\in[d_b]} \sum_{f\in[d_a]} |\xi_{e,f}(u,a,b)|^2
= \sum_{i=1}^{c_\eta} |z_i|^2
\end{aligned}
\]
where each $\xi_{e,f}(u,a,b)$ is identified with a complex coordinate $z_i$ above. Passing to real and imaginary parts of the coordinates $z_i$, we see that the image of $P$ in $H(-,1)$ is homeomorphic to $\R^{2c_\eta} - B^{2c_\eta}$ as desired.
\end{proof}

\subsection{Superpotential contributions and shifts}
\label{ss:w.contrib.shifts}

Let $\bfs{m}=(\bfs{m}^1,\ldots,\bfs{m}^n)$ be a horizontal Kostant series and let $\eta$ be the corresponding horizontal stratum. For $\beta' \in \Phi(\rowQ{i'})$ and $\beta'' \in \Phi(\rowQ{i''})$ write
\[
\beta' = \beta^{i'}_{s,t} = \sum_{j=s}^{t} \epsilon_{i'j}
\quad\text{and}\quad
\beta''= \beta^{i''}_{u,v} = \sum_{j=u}^{v} \epsilon_{i''j}
\]
for some $s,t,u,v$.

\begin{defn}
When $i'<i''$, define the numbers \[\upform{\beta'}{\beta''}\quad\text{and}\quad\downform{\beta'}{\beta''}\] respectively to be the number of \emph{upward pointing} and \emph{downward pointing} arrows of the form $(i',j)\to(i'',j)$ where $j$ lies in the overlap $\beta'\intersect\beta'' = [s,t]\intersect[u,v]$, assuming the interval is nonempty. Whenever the overlap is empty, we assign the value zero to both forms.
\end{defn}

Observe that both of these forms vanish whenever $i''-i' \neq 1$. Furthermore, when $\beta'$ and $\beta''$ are thought of as dimension vectors we have \[\lambda(\beta',\beta'') = \downform{\beta'}{\beta''} - \upform{\beta'}{\beta''}.\]

\begin{example}
\label{ex:upform.downform}
In Figure \ref{fig:upform.downform} we see $\downform{\alpha}{\beta}=2$, $\upform{\alpha}{\beta} = \downform{\beta}{\gamma} = \upform{\beta}{\gamma} = 1$, and $\upform{\alpha}{\gamma} = \downform{\alpha}{\gamma} = 0$.

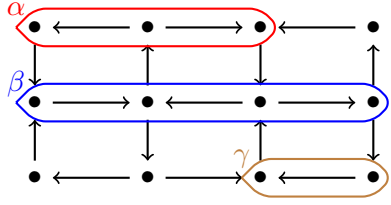
\begin{figure}
\begin{tikzpicture}[thick,auto]

\node (11) at (0,2) {$\bullet$};
\node (21) at (0,1) {$\bullet$};
\node (31) at (0,0) {$\bullet$};
\node (12) at (1.5,2) {$\bullet$};
\node (22) at (1.5,1) {$\bullet$};
\node (32) at (1.5,0) {$\bullet$};
\node (13) at (3,2) {$\bullet$};
\node (23) at (3,1) {$\bullet$};
\node (33) at (3,0) {$\bullet$};
\node (14) at (4.5,2) {$\bullet$};
\node (24) at (4.5,1) {$\bullet$};
\node (34) at (4.5,0) {$\bullet$};

\draw [->] (11) -- (21);
\draw [->] (31) -- (21);
\draw [->] (22) -- (12);
\draw [->] (22) -- (32);
\draw [->] (13) -- (23);
\draw [->] (33) -- (23);
\draw [->] (24) -- (14);
\draw [->] (24) -- (34);
\draw [->] (12) -- (11);
\draw [->] (12) -- (13);
\draw [->] (14) -- (13);
\draw [->] (21) -- (22);
\draw [->] (23) -- (22);
\draw [->] (23) -- (24);
\draw [->] (32) -- (31);
\draw [->] (32) -- (33);
\draw [->] (34) -- (33);

%alpha
\draw[red,rounded corners]
(-.25,2) -- (0,2.25) -- (3,2.25) -- (3.25,2) -- (3,1.75) -- (0,1.75) -- (-.25,2);
\node[red] at (-.25,2.25) {$\alpha$};

%beta
\draw[blue,rounded corners]
(-.25,1) -- (0,1.25) -- (4.5,1.25) -- (4.75,1) -- (4.5,0.75) -- (0,0.75) -- (-.25,1);
\node[blue] at (-.25,1.25) {$\beta$};

%gamma
\draw[brown,rounded corners]
(2.75,0) -- (3,0.25) -- (4.5,0.25) -- (4.75,0) -- (4.5,-.25) -- (3,-.25) -- (2.75,0);
\node[brown] at (2.75,0.25) {$\gamma$};

\end{tikzpicture}
\caption{Three roots supported along rows in $A_{3}\square A_{4}$.}
\label{fig:upform.downform}
\end{figure}

\end{example}

\begin{defn}
\label{defn:w.contrib}
Let $i'<i''$, $\beta'\in\hpri{i'}$, and $\beta''\in\hpri{i''}$. The \defin{superpotential contribution} $\supc(\beta',\beta'')$ is defined to be the non-negative integer
\begin{equation}
\label{eqn:w.beta.down.up}
\supc(\beta',\beta'') =
\begin{cases}
\downform{\beta'}{\beta''}
	&,~\lambda(\beta',\beta'')\leq 0 \quad\text{(i.e. $\beta'\prec\beta''$)}	\\
\upform{\beta'}{\beta''}
	&,~\text{otherwise}
\end{cases}.
\end{equation}
Notice this can be rephrased as
\begin{equation}
\label{eqn:w.beta.up}
\supc(\beta',\beta'') =
\begin{cases}
\upform{\beta'}{\beta''} + \lambda(\beta',\beta'')
	&,~\lambda(\beta',\beta'')\leq 0 \quad\text{(i.e. $\beta'\prec\beta''$)}	\\
\upform{\beta'}{\beta''}
	&,~\text{otherwise}
\end{cases}.
\end{equation}
\end{defn}

Using this, we define the following combinatorial invariant of the stratum $\eta$, and dedicate the remainder of the subsection to interpreting its geometric significance.

\begin{defn}
The non-negative integer $\w(\eta)$ is called the \defin{superpotential shift} of the horizontal stratum $\eta$. It is defined by the formula
\begin{align}
\label{eqn:w.eta.defn}
\begin{split}
\w(\eta) &= \sum_{1\leq i<j\leq n}
\left(
\sum_{\alpha\in\hpri{i}}\sum_{\beta\in\hpri{j}} \supc(\alpha,\beta)\,m^{i}_\alpha \, m^{j}_\beta
\right)  \\
&= \sum_{i=1}^{n-1} \left( \sum_{\alpha\in\hpri{i}}\sum_{\beta\in\hpri{i+1}} \supc(\alpha,\beta)\,m^{i}_\alpha \, m^{i+1}_\beta \right) .
\end{split}
\end{align}
\end{defn}

Notice that the second equality follows from the first because the superpotential contribution $\supc(\alpha,\beta)$ is nonzero only if $\beta\in\hpri{i+1}$ whenever $\alpha\in\hpri{i}$.

We can similarly define $\w(\theta)$ for vertical strata $\theta=\theta(\bfs{m}^\bullet)$ by counting rightward pointing and leftward pointing arrows. Explicitly, for $j'<j''$ and vertical positive roots $\beta'\in\vprj{j'}$ and $\beta''\in\vprj{j''}$ we analogously define $\rightform{\beta'}{\beta''}$ and $\leftform{\beta'}{\beta''}$. Furthermore, $\lambda(\beta',\beta'') = \rightform{\beta'}{\beta''} - \leftform{\beta'}{\beta''}$, these are nonzero only when $j''-j'=1$. The corresponding superpotential contribution is therefore defined by
\begin{equation}
\label{eqn:w.beta.right.left}
\supc(\beta',\beta'') =
\begin{cases}
\rightform{\beta'}{\beta''}
	&,~\lambda(\beta',\beta'')\leq 0 \quad\text{(i.e. $\beta'\prec\beta''$)}	\\
\leftform{\beta'}{\beta''}
	&,~\text{otherwise}
\end{cases}
\end{equation}
and the superpotential shift is
\[
\w(\theta) = \sum_{j=1}^{m-1} \left( \sum_{\alpha\in\vprj{j}}\sum_{\beta\in\vprj{j+1}} \supc(\alpha,\beta)\,m^{j}_\alpha \, m^{j+1}_\beta \right).
\]

\begin{thm}
\label{thm:poincare.series.stratum}
For each horizontal stratum $\eta$, there is an identity of Poincar\'e series
\begin{equation}
\label{eqn:w.eta.poincare.shift}
\curly P [ \coho^*_{\G_\gamma}(\eta;W_\gamma) ] =
	q^{\w(\eta)}\, \cP_\eta.
\end{equation}
The analogous statement holds for vertical strata.
\end{thm}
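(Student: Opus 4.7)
My plan is to string together three reductions---one cohomological, one homotopical, and one algebraic---and then conclude with a combinatorial identification of exponents.

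First, I would reduce the computation to the normal locus. By definition $\coho^*_{\G_\gamma}(\eta;W_\gamma) = \coho^*_{\G_\gamma}(\eta,\, \eta \cap W_\gamma^{-1}(S_t))$ for $t \ll 0$. Applying the relative version of Lemma \ref{lem:meets.every.orbit} with $G=\G_\gamma$, $X = \eta$, $A = \nu_\eta$, $Z = \eta\cap W_\gamma^{-1}(S_t)$, and $G_A = \G_\eta$, one obtains an isomorphism with $\coho^*_{\G_\eta}(\nu_\eta, P)$ for $P = \nu_\eta \cap W_\gamma^{-1}(S_t)$. The two hypotheses of the lemma are straightforward: every $\G_\gamma$-orbit in $\eta$ meets $\nu_\eta$ because each horizontal sub-quiver can be individually brought to its normal form by its row's base-change subgroup, and the second hypothesis is exactly the definition of $\G_\eta$.

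Next, I would invoke Theorem \ref{thm:our.hmtpy} to obtain an equivariant homotopy equivalence of pairs $(\nu_\eta, P) \simeq (\R^{2c_\eta}, \R^{2c_\eta} - B^{2c_\eta})$, and hence a $\G_\eta$-equivariant isomorphism on cohomology. The pair $(\R^{2c_\eta}, \R^{2c_\eta} - B^{2c_\eta})$ is equivariantly pair-homotopy equivalent to $(\R^{2c_\eta}, \R^{2c_\eta} - \{0\})$. Since the construction in the proof of Theorem \ref{thm:our.hmtpy} realizes $\R^{2c_\eta}$ as the underlying real space of a complex $\G_\eta$-representation of dimension $c_\eta$ on which $\G_\eta$ acts unitarily via the block formula \eqref{eqn:Geta.act.Xi.coords}, the equivariant Thom isomorphism gives
\[
\coho^*_{\G_\eta}(\R^{2c_\eta}, \R^{2c_\eta} - \{0\}) \cong \coho^{*-2c_\eta}(B\G_\eta).
\]
Passing to Poincar\'e series and invoking Proposition \ref{prop:poincare.eta}, one reaches
\[
\cP[\coho^*_{\G_\gamma}(\eta;W_\gamma)] = q^{c_\eta}\,\cP_\eta.
\]

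Finally, it remains to identify $c_\eta = \w(\eta)$ combinatorially. Both sums range over pairs $(i, i+1)$ of adjacent rows and over pairs of roots $\beta_a \in \hpri{i}$, $\beta_b \in \hpri{i+1}$, with coefficients $m^i_{\beta_a} m^{i+1}_{\beta_b}$, so it suffices to show $r(a,b) = \supc(\beta_a,\beta_b)$ for every such pair. On one hand, within the overlapping column range $[k(a,b),\ell(a,b)]$ containing $\delta(a,b)+1$ vertical arrows, the alternating orientation forces $|\downform{\beta_a}{\beta_b} - \upform{\beta_a}{\beta_b}| \leq 1$, so $\supc(\beta_a,\beta_b) = \min(\downform{\beta_a}{\beta_b},\upform{\beta_a}{\beta_b})$ by the case analysis in \eqref{eqn:w.beta.down.up}; on the other hand, $r(a,b)$ is the number of positive eigenvalues of the Hermitian form $\mathfrak{h}$ in $\delta(a,b)+1$ variables from Lemma \ref{lem:herm.form}. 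A direct check confirms these two quantities agree.

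The main obstacle in this plan is genuinely packaged into Theorem \ref{thm:our.hmtpy}, which controls the delicate analysis of the superpotential Hermitian form on $\nu_\eta$. Given that theorem, the remaining work is largely formal: verifying the two hypotheses of Lemma \ref{lem:meets.every.orbit}, checking that the linear $\G_\eta$-action from \eqref{eqn:Geta.act.Xi.coords} preserves the complex structure on the positive eigenspace so that the equivariant Thom isomorphism applies, and performing the combinatorial matching of $r(a,b)$ with $\supc(\beta_a,\beta_b)$ in the alternating overlap.
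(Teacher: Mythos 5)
Your proposal follows the paper's proof essentially verbatim: reduce to the normal locus via the relative form of Lemma \ref{lem:meets.every.orbit}, apply the equivariant homotopy of Theorem \ref{thm:our.hmtpy} together with the Thom isomorphism over $B\G_\eta$ to produce the shift $q^{c_\eta}$, and then identify $c_\eta=\w(\eta)$ by matching $r(a,b)$ with $\supc(\beta_a,\beta_b)$. The only cosmetic differences are that you make explicit the complex (hence oriented) structure needed for the equivariant Thom isomorphism and spell out the combinatorial matching that the paper records as a one-line observation.
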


\begin{proof}
As for Theorem \ref{thm:our.hmtpy}, it suffices to consider the case $n=2$, i.e.~when $Q=A_2\square A_{n^\prime}$. Moreover, we recycle the notation from the statement and proof of Theorem \ref{thm:our.hmtpy}.

First, (with $t\ll0$) the relative version  of Lemma \ref{lem:meets.every.orbit} provides an isomorphism $\coho^*_{\G_\gamma}(\eta;W_\gamma) \iso \coho^*_{\G_\eta}(\nu_\eta;W_\gamma)$. Next, observe that for each pair $(a,b)$ we have $r(a,b) = \supc(\beta_a,\beta_b)$. Therefore, $\w(\eta) = c_\eta$. This means that, by taking $t\ll0$, the homotopy equivalence of Theorem \ref{thm:our.hmtpy} implies a further isomorphism
\[
\coho^*_{\G_\eta}(\nu_\eta;W_\gamma) \stackrel{\iso}{\longrightarrow} \coho^*_{\G_\eta}(\R^{2\w(\eta)},\R^{2\w(\eta)}-B^{2\w(\eta)})
\]
and the target above is isomorphic to $\coho^*_{\G_\eta}(\R^{2\w(\eta)},\R^{2\w(\eta)}-0)$ via another (equivariant) homotopy equivalence of pairs. Using the Borel construction this latter cohomology is, by definition, $\coho^*(E,E_0)$ where $E$ is a vector bundle over $B\G_\eta$ with fibers $\R^{2\w(\eta)}$ and where $E_0$ denotes the complement of the zero section in the total space of $E$. Now, the Thom Isomorphism Theorem \cite[Theorem~10.4]{jmjs1974} implies that $\coho^j(E,E_0) \iso \coho^{j-2\w(\eta)}(B\G_\eta)$ and hence $\cP[\coho^*_{\G_\gamma}(\eta;W_\gamma)]$ is equal to
\[
\begin{aligned}
\cP[\coho^*_{\G_\eta}(\nu_\eta;W_\gamma)]
	&= \sum_{j\geq 0} q^{j/2} \dim_\R \left(\coho^j_{\G_\eta}(\nu_\eta;W_\gamma)\right)
	= \sum_{k\geq 0} q^k \dim_\R \left(\coho^{2k}_{\G_\eta}(\nu_\eta;W_\gamma)\right) \\
	& = \sum_{k\geq 0} q^{k+\w(\eta)} \dim_\R \left(\coho^{2k}(B\G_\eta)\right)
	= q^{\w(\eta)} \cP[\coho^*(B\G_\eta)]
\end{aligned}
\]
as desired.
\end{proof}

\section{Kazarian spectral sequence in rapid decay cohomology}
\label{s:Kaz.Spec.Seq}

In this section we describe the rapid decay cohomology version of Kazarian's spectral sequence (coming from \cite{mk1997}, see also \cite{rr2013}, c.f.~earlier works e.g.~\cite{marb1983}) for our representation and stratifications. The ordinary cohomology version for Dynkin quiver representations was discussed in \cite{rr2013}.

\begin{thm}
\label{thm:Kaz.spec.seq.RDC}
For each dimension vector $\gamma$ there exists a spectral sequence $E_\bullet^{ij}$ such that
\begin{enumerate}[label=(\alph*),leftmargin=*]
\item \label{item:conv}
$E^{ij}_\bullet$ converges to $\coho^{i+j}_{\G_\gamma}(\M_\gamma;W_\gamma)$;
\item \label{item:hor} taking the direct sum over strata $\eta \in \Hor(\M_\gamma)$ we have
\begin{equation}
\label{eqn:Kaz.spec.seq.hor}
E_1^{ij} = \Dirsum_{\codim_\R(\eta;\M_\gamma)=i} \coho^j_{\G_\gamma}(\eta;W_\gamma) = \Dirsum_{\codim_\R(\eta;\M_\gamma)=i} \coho^{j-2\w(\eta)}(B\G_\eta);
\end{equation}
\item \label{item:E1}
the spectral sequence degenerates at the $E_1$ page.
\end{enumerate}
There is also another spectral sequence, with the same properties except \ref{item:hor} is replaced with
\begin{enumerate}[label=(b'),leftmargin=*]
\item \label{item:ver} taking the direct sum over strata $\theta \in \Ver(\M_\gamma)$ we have
\begin{equation}
\label{eqn:Kaz.spec.seq.ver}
E_1^{ij} = \Dirsum_{\codim_\R(\theta;\M_\gamma)=i} \coho^j_{\G_\gamma}(\theta;W_\gamma) = \Dirsum_{\codim_\R(\theta;\M_\gamma)=i} \coho^{j-2\w(\theta)}(B\G_\theta).
\end{equation}
\end{enumerate}
\end{thm}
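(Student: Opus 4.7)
The plan is to construct this spectral sequence from a $\G_\gamma$-invariant filtration of $\M_\gamma$ by unions of strata ordered by codimension, identify the $E_1$ page using Theorem \ref{thm:poincare.series.stratum} together with a Thom isomorphism, and then deduce degeneration from a parity argument. The whole construction and argument can be carried out uniformly for the horizontal and vertical cases; I describe the horizontal case.

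First, I would order the horizontal strata by real codimension in $\M_\gamma$ and set $F_p = \bigcup_{\codim_\R(\eta)\geq p}\eta$. Because the orbit closure $\overline{\Omega_{\bfs m^i}(\rowQ{i})}$ inside $\M_{\gamma(i,\bullet)}$ is a union of orbit closures of strictly higher codimension (a standard fact for type-$A$ quivers, since orbits are indexed by Kostant partitions with a compatible closure order), and the vertical arrows are unconstrained inside a horizontal stratum, each $F_p$ is closed and $\G_\gamma$-invariant in $\M_\gamma$. Intersecting with $X_{W_\gamma}(t)=W_\gamma^{-1}(S_t)$ for $t\ll 0$ produces a compatible filtration of pairs $(\M_\gamma, X_{W_\gamma}(t))$, and the standard spectral sequence of a filtered pair in equivariant cohomology, after passing to the limit $t\to-\infty$ (which is well defined since the relative groups stabilize for $t\ll 0$), gives a spectral sequence converging to $\coho^*_{\G_\gamma}(\M_\gamma; W_\gamma)$. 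This addresses part \ref{item:conv}.

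Next, by excision the relative group $\coho^{p+q}_{\G_\gamma}(F_p, F_{p+1}\cup X_{W_\gamma}(t))$ decomposes as a direct sum indexed by codimension $p$ strata. Each stratum $\eta$ is smooth and $\G_\gamma$-invariant (it fibers over a product of Dynkin-type-$A$ orbits with free vertical maps), so the equivariant Thom isomorphism turns each summand into $\coho^q_{\G_\gamma}(\eta; W_\gamma)$. Reindexing $i=p$, $j=q$ and applying Theorem \ref{thm:poincare.series.stratum} to rewrite $\coho^j_{\G_\gamma}(\eta; W_\gamma) \cong \coho^{j-2\w(\eta)}(B\G_\eta)$ yields exactly the formula \eqref{eqn:Kaz.spec.seq.hor}, establishing part \ref{item:hor}.

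For part \ref{item:E1}, I plan to use a parity argument. Since each stratum is a complex subvariety, $i=\codim_\R(\eta)=2\codim_\C(\eta)$ is even; and since $\G_\eta\hmtpc\prod_{\beta}\U(m_\beta)$ by Proposition \ref{prop:G.eta.hmtpy}, the cohomology $\coho^*(B\G_\eta)$ is a polynomial algebra in Chern classes, hence concentrated in even degrees, forcing $j-2\w(\eta)$ and therefore $j$ to be even. Thus $E_1^{i,j}$ is supported where both $i$ and $j$ are even. The differential $d_r:E_r^{i,j}\to E_r^{i+r,\,j-r+1}$ could be nonzero only if both endpoints carry nonzero entries, forcing $r$ to be simultaneously even ($i+r$ even) and odd ($j-r+1$ even), a contradiction. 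Hence all differentials vanish. Part \ref{item:ver} is proved identically, replacing horizontal strata by vertical ones throughout. The main obstacle is the closed-subset property of $F_p$; once the $A$-type closure relations are invoked row by row (respectively column by column), everything else reduces to standard machinery.
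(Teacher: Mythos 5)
Your overall strategy (stratification filtration, excision plus a Thom isomorphism, Theorem \ref{thm:poincare.series.stratum} for the second equality, and the even--even parity argument for degeneration) is the same as the paper's, and the convergence and degeneration parts are fine. However, there is a genuine gap in your identification of the $E_1$ page. You filter by the \emph{closed} sets $F_p=\bigcup_{\codim_\R(\eta)\geq p}\eta$ and take as $E_1$ the relative groups $\coho^{p+q}_{\G_\gamma}(F_p,\,F_{p+1}\cup W_\gamma^{-1}(S_t))$, claiming that excision and the equivariant Thom isomorphism convert each summand into $\coho^{q}_{\G_\gamma}(\eta;W_\gamma)$. This step fails: a codimension-$p$ stratum $\eta$ is \emph{open} in $F_p$ (its complement in $F_p$ is the closed set $F_{p+1}$), so inside $F_p$ there is no rank-$p$ normal bundle of $\eta$ and hence no Thom class producing the degree shift by $p=\codim_\R(\eta;\M_\gamma)$. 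In fact the groups $\coho^*(F_p,F_{p+1})$ are compactly-supported-type invariants of $F_p\setminus F_{p+1}$, not the ordinary (equivariant) cohomology of the strata; already for $X=\R$ stratified by $\{0\}$ and its complement one has $\coho^q(F_0,F_1)=\coho^q(\R,\{0\})=0$ for all $q$, while the claimed formula would give the nonzero group $\coho^0$ of the open stratum. So with your choice of pairs the asserted isomorphism in \eqref{eqn:Kaz.spec.seq.hor} is simply not what the spectral sequence computes.

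The fix is to work with the complementary \emph{increasing open} filtration, as the paper does: set $X_i=\bigcup_{\codim_\R(\eta)\leq i}\eta=\M_\gamma\setminus F_{i+1}$, apply the Borel construction, and build the exact couple from the triples $\bigl(B_{\G_\gamma}\M_\gamma,\,B_{\G_\gamma}X_i,\,B_{\G_\gamma}X_{i-1}\bigr)$ relative to $W_\gamma^{-1}(S_t)$, with $t\ll 0$ chosen uniformly for the finitely many $i$. Then the union of codimension-$i$ strata is \emph{closed} in the open set $X_i$, it admits an equivariant tubular neighborhood of real rank $i$, and excision followed by the \emph{relative} Thom isomorphism yields $E_1^{ij}=\bigoplus_{\codim_\R(\eta)=i}\coho^{j}_{\G_\gamma}(\eta;W_\gamma)$, after which Theorem \ref{thm:poincare.series.stratum} gives $\coho^{j-2\w(\eta)}(B\G_\eta)$ and your parity argument applies verbatim. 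Your observation that the closure of a horizontal stratum only meets strata of strictly larger codimension (row-by-row from the type-$A$ orbit closure order) is exactly what guarantees that the $X_i$ are open, so that ingredient of your write-up is still needed; only the choice of pairs feeding the exact couple must be changed.
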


\begin{proof}
Consider the group $G:=\G_\gamma$ acting on the vector space $X:=\M_\gamma$ with its stratification to horizontal strata $\eta\in \Hor(\M_\gamma)$ introduced in Section \ref{s:stratify.repspace}. Denoting
\[
X_i = \Union_{\codim_\R(\eta;\M_\gamma)\leq i} \eta
\]
we obtain a topological filtration $\emptyset \subset X_0 \subset X_1 \subset X_2 \subset \cdots \subset X_{\dim_\R X} = X$. Applying the Borel construction ($B_G V = EG\times_G V$ where $EG$ is a contractible space with a free $G$-action) we obtain the filtration
\[
\emptyset \subset B_G X_0 \subset B_G X_1 \subset B_G X_2 \subset \cdots \subset B_G X_{\dim_\R X} = B_G X.
\]
In fact, since $G$ is a product of $GL_n(\C)$'s we may assume that $B_GX$ is a total space of a vector bundle over a product of infinite Grassmannians. That is, $B_GX$ is an infinite dimensional manifold, and the $B_G X_i$'s are finite codimensional open submanifolds.

Let $Z_t=W_\gamma^{-1}(S_t)\subset X$ and recall from Section \ref{ss:rdc.w} that
\[
\coho^*_G(X_i;W_\gamma)=\lim_{t\to -\infty} \coho^*_G(X_i,X_i\cap Z_t)
\]
and the limit is achieved at a small enough $t$. Let us choose $t$ small enough so that $\coho^*_G(X_i,W_\gamma)=\coho^*_G(X_i,X_i\cap Z_t)$ for all of the finitely many $i$.

To simplify notation let us denote $X_i'=B_GX_i$, $Z_i'= B_G( X_i \cap Z_t)$, $X'=B_GX$, $Z'=B_GZ_t$.
The long exact sequence of the triple of quotient spaces
$
X' / Z' \supset
X'_i/Z'_i \supset
X'_{i-1} /Z'_{i-1}
$
induces the exact couple
\begin{equation}\label{eqn:exactcouple}
\xymatrix{
\bigoplus_{i,j} \coho^{i+j}( X'/Z' , X'_i/Z'_i)
\ar[r] & \bigoplus_{i,j} \coho^{i+j}( X'/Z', X'_i/Z'_i) \ar[dl]  \\
 \bigoplus_{i,j} \coho^{i+j}( X'_i/Z'_i, X'_{i-1}/Z'_{i-1}). \ar[u] &
}
\end{equation}
We are going to prove that the spectral sequence associated with this exact couple satisfies the conditions of the theorem.

The spectral sequence of the exact couple converges to the direct limit of the top line of (\ref{eqn:exactcouple}), which is
\[
\coho^*(X'/Z',\emptyset)=\coho^*(B_GX/B_GZ_t)=\coho^*(B_GX,B_GZ_t)=\coho^*_G(X;W_\gamma),
\]
proving property \ref{item:conv}.

The $E_1$ page of the spectral sequence of the exact couple is $E_1^{ij}=\coho^{i+j}(X_i'/Z_i', X_{i-1}'/Z_{i-1}')$, which equals $\coho^{i+j}( X_i', X_{i-1}'\cup Z_i' )$ by an obvious improvement of the standard argument comparing absolute homology of a quotient space with their relative homology (e.g. \cite[Proposition~2.22]{ah2002}). Let $U$ be a tubular neighborhood of $X'_i-X'_{i-1}$ in $X'_i$, and let $U_0=U-(X'_i-X'_{i-1})$. We get
\[
\coho^{i+j}( X_i', X_{i-1}'\cup Z_i' ) =
\coho^{i+j}(U,U_0\cup (Z_i' \cap U))=
\coho^j(X'_i-X'_{i-1},Z'_i \cap (X_i'-X_{i-1}'))
\]
\[
=\bigoplus_{\codim_\R ( \eta;X)=i} \coho^j_G(\eta,\eta\cap W_\gamma^{-1}(S_t))
=\bigoplus_{\codim_\R (\eta;X)=i} \coho^j_G(\eta; W_\gamma).
\]
The first equality holds by excision, the second one by the relative Thom isomorphism \cite[Theorem~11.7.34]{AGP}, and the rest by definition. This completes the proof of the first equality in property \ref{item:hor}. The second equality in property \ref{item:hor} is the content of the proof of Theorem \ref{thm:poincare.series.stratum}.

Property \ref{item:E1} follows from the fact that, according to property \ref{item:hor}, the non-zero terms of the $E_1$-page are all of the form $E_1^{\text{even,even}}$.

The proof of the other spectral sequence, satisfying properties \ref{item:conv}, \ref{item:ver}, and \ref{item:E1} follows the same way starting from the vertical stratification of $\M_\gamma$.
\end{proof}

\subsection{Rapid decay cohomology algebras of general representations; a digression.} 
\label{ss:rdc.general}

The key corollary of Theorem \ref{thm:Kaz.spec.seq.RDC} is that we have an expression for the Poincar\'e series of the rapid decay cohomology algebra $\coho^*_{G_\gamma}(\M_\gamma;W)$ in terms of contributions from the strata. Namely, we have
\[
\cP[ \coho^*_{G_\gamma}(\M_\gamma;W) ] =
\sum_{\eta\in \Hor(\M_\gamma)} q^{\w(\eta)+\codim_{\C}(\eta;\M_\gamma)} \cP_\eta.
\]
In fact we have two such expressions coming from the two spectral sequences of Theorem \ref{thm:Kaz.spec.seq.RDC}, and the comparison of the two is essential in the proof of our main theorem in Section \ref{s:pmt}.

In this subsection, for future reference, we collect the facts that were needed to have such an expression for a rapid decay cohomology algebra coming from a stratification---without refering to the actual representation at hand. The summary of results in Sections \ref{s:stratify.repspace}, \ref{s:w.strata}, \ref{s:Kaz.Spec.Seq} without specifying to the representation $\M_\gamma$ is the following.

\begin{thm}\label{thm:general}
Let $R$ be an algebraic representation of the linear algebraic group $G$, and let $W:R\to \C$ be an invariant regular function (superpotential). Let $R$ be stratified into finitely many $G$-invariant smooth open subvarieties (strata). Let each stratum $\eta$ have a distinguished subset $\nu\subset \eta$ which is a vector space, and denote $G_\eta=\{g\in G: g\nu\subset \nu\}$.
\begin{itemize}
\item  Assume that for all strata $\eta$
\begin{itemize}
\item every $G$ orbit in $\eta$ intersects $\nu$;
\item  if $g\in G$ is such that there exists $a\in \nu$ with $g\cdot a \in \nu$, then $g\in G_\eta$.
\end{itemize}
\item{} Assume that for all $\eta$ the space $BG_\eta$ has only even cohomology.
\item{} Assume that for $t\ll 0$ there is a $G_\eta$-equivariant homotopy equivalence of pairs
\[
(\nu, \nu \cap W^{-1}(S_t)) \simeq (\R^{2\w(\eta)},\R^{2\w(\eta)}-B^{2\w(\eta)}).
\]
\end{itemize}
Then
\[
\pushQED{\qed}
\cP[ \coho^*_G(R;W)]=\sum_\eta q^{\w(\eta)+\codim_{\C}(\eta;R)} \cP[ \coho^*(BG_\eta)].
\qedhere
\popQED
\]
\end{thm}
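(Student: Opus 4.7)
The plan is to run, in the abstract setting, exactly the same two-step argument that proves Theorem~\ref{thm:Kaz.spec.seq.RDC} together with Theorem~\ref{thm:poincare.series.stratum}; the three bulleted hypotheses are precisely the abstractions of what was used. Concretely, I would first build the Kazarian-type spectral sequence from the filtration of $R$ by real codimension, and then compute each $E_1$-term stratum-by-stratum using the normal-locus reduction and the Thom isomorphism.

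Step one: spectral sequence. Let $X=R$, and filter $X$ by the closed subsets $X_i=\bigcup_{\codim_\R(\eta;R)\leq i}\overline{\eta}$. Choose $t\ll 0$ so that, for every stratum simultaneously, $\coho^*_G(\eta;W)=\coho^*_G(\eta,\eta\cap W^{-1}(S_t))$ (this is possible since there are only finitely many strata). Apply the Borel construction to get a filtration of $B_GX=EG\times_GX$ by (finite real codimension) submanifolds $X_i'$, and let $Z'_i=B_G(X_i\cap W^{-1}(S_t))$. The long exact sequence of the triple $(X'/Z',X'_i/Z'_i,X'_{i-1}/Z'_{i-1})$ furnishes an exact couple whose associated spectral sequence converges to $\coho^*(B_GX,B_GZ_t)=\coho^*_G(R;W)$, giving property~(a). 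Applying excision to a tubular neighborhood of the open stratum of codimension $i$ inside $X_i'\setminus X_{i-1}'$ and the relative Thom isomorphism in the normal bundle directions (exactly as in the proof of Theorem~\ref{thm:Kaz.spec.seq.RDC}, citing \cite[Theorem~11.7.34]{AGP}) identifies
\[
E_1^{ij}=\bigoplus_{\codim_\R(\eta;R)=i}\coho^j_G(\eta;W).
\]

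Step two: stratum-by-stratum computation. For a fixed stratum $\eta$ with distinguished subset $\nu$, the two conditions in the first bullet are exactly the hypotheses of Lemma~\ref{lem:meets.every.orbit} applied to $G$ acting on $\eta$ with $A=\nu$ and the $G$-invariant subspace $\eta\cap W^{-1}(S_t)$; hence $\coho^*_G(\eta;W)\iso\coho^*_{G_\eta}(\nu;W)$. The third bullet then rewrites this as
\[
\coho^*_{G_\eta}\bigl(\R^{2\w(\eta)},\R^{2\w(\eta)}-B^{2\w(\eta)}\bigr)\iso\coho^*_{G_\eta}\bigl(\R^{2\w(\eta)},\R^{2\w(\eta)}-0\bigr),
\]
which, via the Borel construction, is the cohomology of the pair $(E,E_0)$ for the trivial rank-$2\w(\eta)$ real vector bundle $E\to BG_\eta$ relative to the complement of its zero section. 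The Thom isomorphism \cite[Theorem~10.4]{jmjs1974} then gives
\[
\coho^j_G(\eta;W)\iso\coho^{j-2\w(\eta)}(BG_\eta),
\]
matching the second equality in Theorem~\ref{thm:Kaz.spec.seq.RDC}(b).

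Step three: degeneration and assembly. Since by the second bullet $\coho^*(BG_\eta)$ is concentrated in even degrees, and the shift $2\w(\eta)$ is even, while $\codim_\R(\eta;R)=2\codim_\C(\eta;R)$ is also even (the strata are smooth open subvarieties of a complex representation), every $E_1^{ij}$ is concentrated in $(i,j)$ both even; all differentials on $E_r$ $(r\geq 1)$ change parity of $i+j$ appropriately and must vanish, so the sequence collapses at $E_1$. Summing Poincar\'e series over the $E_1$-page, with an overall shift of $q^{(i+j)/2}=q^{\codim_\C(\eta;R)}\cdot q^{j/2}$ and using $\cP[\coho^*_G(\eta;W)]=q^{\w(\eta)}\cP[\coho^*(BG_\eta)]$ from step two, yields the asserted formula
\[
\cP[\coho^*_G(R;W)]=\sum_\eta q^{\w(\eta)+\codim_\C(\eta;R)}\,\cP[\coho^*(BG_\eta)].
\]
The only point requiring care is verifying that the excision and Thom isomorphism arguments from the proof of Theorem~\ref{thm:Kaz.spec.seq.RDC} go through without appealing to specific features of $\M_\gamma$; this is routine because those arguments used only that $R$ is a $G$-representation and that the strata are smooth $G$-invariant subvarieties, both of which are in our hypotheses. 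Thus I expect no genuine obstacle, the theorem being in essence a clean abstraction of the work already done.
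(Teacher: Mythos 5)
Your proposal is correct and follows essentially the same route as the paper, which proves Theorem~\ref{thm:general} precisely by observing that the arguments of Lemma~\ref{lem:meets.every.orbit}, Theorem~\ref{thm:poincare.series.stratum}, and the Kazarian spectral sequence of Theorem~\ref{thm:Kaz.spec.seq.RDC} use only the abstracted hypotheses listed in the statement. Your three steps (filtration/exact couple, normal-locus reduction plus Thom isomorphism over $B G_\eta$, and degeneration at $E_1$ by evenness) are exactly that argument, so nothing further is needed.
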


\section{Counting combinatorial invariants in the quantum algebra}
\label{s:count.qalg}

We begin this section with notation which will appear throughout the sequel. Define the following quadratic forms on the dimension vector $\gamma$:
\begin{align}
\label{eqn:up.gamma.defn}
-\mathbf{up}(\gamma) &= \sum_{c\in Q_1~\mathrm{upward~pointing}} \gamma({t(c)})\gamma({h(c)})
\\
\label{eqn:left.gamma.defn}
-\mathbf{left}(\gamma) &= \sum_{c\in Q_1~\mathrm{leftward~pointing}} \gamma({t(c)})\gamma({h(c)})
\\
\label{eqn:dd.gamma.defn}
-\dd(\gamma) &= \sum_{c\in Q_1~\mathrm{downward~pointing}} \gamma({t(c)})\gamma({h(c)}).
\\
\label{eqn:rr.gamma.defn}
-\rr(\gamma) &= \sum_{c\in Q_1 \text{~rightward pointing}}\gamma(t(c))\gamma(h(c)).
\end{align}
Moreover, we define the \defin{horizontal inner product} and \defin{vertical inner product} respectively to be
\begin{align}
\hip(\gamma) & = -\mathbf{up}(\gamma) - \dd(\gamma) \\
\vip(\gamma) & = -\mathbf{left}(\gamma)-\rr(\gamma).
\end{align}

Furthermore, for any subset of vertices $V\subset Q_0$ for which $\{y_{\epsilon_{ij}}:(i,j)\in V\}$ is a commuting subset in $\A_Q$ and any dimension vector $\gamma$, let
\[
\yy_V^\gamma = \prod_{(i,j)\in V} y_{\epsilon_{ij}}^{\gamma(i,j)}.
\]
Since $V$ forms a commuting set, the product above is well-defined in any order.

\subsection{Counting codimensions in the quantum algebra}
\label{ss:count.codim.qalg}

Fix a row $i$ of $Q$. Order the set of horizontal positive roots $\hpr$ as in \eqref{eqn:hor.ord}. Set ${N'}=n^\prime(n^\prime+1)/2$ and let $\phi_{1}\prec\cdots\prec\phi_{\phi_{N'}}$ be the induced ordering on $\hpri{i}$. Let $\gamma(i,\bullet) = \sum_{j=1}^{n^\prime}\gamma(i,j) \epsilon_{i,j}$ be a dimension vector supported along the row $\rowQ{i}$ and let $\bfs{m}^i = (m_\beta)_{\beta\in\hpri{i}}$ be a Kostant partition of $\gamma(i,\bullet)$ so that $\gamma(i,\bullet) = \sum_{\beta\in\hpri{i}} m_{\beta} \beta$.

For each $\beta\in \hpri{i}$ there are unique non-negative integers $d_\beta^j$ such that
\begin{equation}
\label{eqn:dj.m.gamma}
\beta = \sum_{j=1}^{n^\prime} d_\beta^j \epsilon_{i,j},
\end{equation}
for which we observe that $\gamma(i,j) = \sum_{\beta\in\hpri{i}} m_\beta\,d_\beta^j$. Let $\M_\gamma(i,\bullet)$ and $\Omega_{\bfs{m}^i}(\rowQ{i})$ be as in Proposition \ref{prop:codim.eta}. Further, let $\hhs(i)$ and $\hts(i)$ denote the vertices from row $i$ which happen to be horizontal heads and tails respectively. Note these are, respectively, the sets of sinks and sources in the alternating quiver $Q(i,\bullet)$.

\begin{prop}
\label{prop:rr.qalg.codim}
In addition to the notations outlined above, let $y_j$ denote $y_{\phi_j}$ and let $m_j$ denote $m_{\phi_j}$. The following identity holds in $\hat\A_Q$:
\begin{equation}
\label{eqn:pos.to.simp.codim}
y_{1}^{m_1}\cdots y_{{N'}}^{m_{N'}}
= (-1)^{s_i}\,q^{p_i}\,\yy_{\hhs(i)}^\gamma\,\yy_{\hts(i)}^\gamma
\end{equation}
where
\begin{subequations}
\begin{gather}
\label{eqn:s_i}
s_i = \sum_\beta m_\beta\left(\sum_j d_\beta^j - 1\right),~\text{and} \\
\label{eqn:p_i}
p_i =
\codim_\C\left(\Omega_{\bfs{m}^i}(\rowQ{i});\M_\gamma(i,\bullet)\right)
	+ \frac{1}{2}\sum_{j} \gamma(i,j)^2
	- \frac{1}{2}\sum_{\beta} m_\beta^2
\end{gather}
\end{subequations}
in which the summations are over $j\in[n^\prime]$ and $\beta\in\hpri{i}$.
\end{prop}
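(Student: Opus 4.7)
The plan is to move both sides of \eqref{eqn:pos.to.simp.codim} through the basis vector $y_\Gamma\in\hat\A_Q$ with $\Gamma := \gamma(i,\bullet)$. The key tool, obtained by induction from $y_\alpha y_\beta = -q^{\lambda(\alpha,\beta)/2}y_{\alpha+\beta}$ (a rewriting of \eqref{eqn:qalg.rel}) together with bilinearity of $\lambda$, is the identity
\[
y_{\alpha_1}\cdots y_{\alpha_k} = (-1)^{k-1}\,q^{\frac{1}{2}\sum_{r<s}\lambda(\alpha_r,\alpha_s)}\,y_{\alpha_1+\cdots+\alpha_k}.
\]

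First I would apply this identity to the sequence of $M := \sum_\beta m_\beta$ dimension vectors consisting of $m_j$ copies of $\phi_j$ in the order \eqref{eqn:hor.ord}; same-root contributions vanish since $\lambda(\phi_j,\phi_j) = 0$, giving
\[
y_{\phi_1}^{m_1}\cdots y_{\phi_{N'}}^{m_{N'}} = (-1)^{M-1}\,q^{E/2}\,y_\Gamma,\qquad E := \sum_{\phi_r\prec\phi_s} m_r m_s\,\lambda(\phi_r,\phi_s).
\]
Next I would apply the identity in reverse to factor $y_\Gamma$ as $\yy_{\hhs(i)}^\gamma\yy_{\hts(i)}^\gamma$, viewed as $|\Gamma| := \sum_j\gamma(i,j)$ simple roots ordered heads-then-tails. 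Within each block all pairwise $\lambda$-values vanish, since no two sinks (or two sources) are adjacent in the alternating quiver $\rowQ{i}$, while across blocks \eqref{eqn:lambda} gives $\lambda(\epsilon_h,\epsilon_t) = -1$ exactly when $t\to h$ is an arrow. The sum of cross-block terms is therefore $-\dim_\C\M_{\gamma(i,\bullet)}$, so
\[
y_\Gamma = (-1)^{|\Gamma|-1}\,q^{\dim_\C\M_{\gamma(i,\bullet)}/2}\,\yy_{\hhs(i)}^\gamma\,\yy_{\hts(i)}^\gamma.
\]

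Combining, the total sign is $(-1)^{M+|\Gamma|}$; using $|\Gamma| = \sum_\beta m_\beta\sum_j d_\beta^j$ one verifies $M + |\Gamma| \equiv s_i \pmod 2$. The total $q$-exponent is $\tfrac{1}{2}(E + \dim_\C\M_{\gamma(i,\bullet)})$, and matching this to $p_i$ is the hard step. I would prove the intermediate identity
\[
E = \chi\bigl(\gamma(i,\bullet),\gamma(i,\bullet)\bigr) + 2\,\codim_\C\bigl(\Omega_{\bfs{m}^i};\M_{\gamma(i,\bullet)}\bigr) - \sum_\beta m_\beta^2,
\]
which combines with $\chi(\gamma(i,\bullet),\gamma(i,\bullet)) = \sum_j\gamma(i,j)^2 - \dim_\C\M_{\gamma(i,\bullet)}$ to yield $E + \dim_\C\M_{\gamma(i,\bullet)} = 2p_i$ as required. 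For this I would invoke Lemma \ref{lem:rr.order.is.our.order} to identify the ordering $\prec$ with the Rim\'anyi ordering \eqref{eqn:rr.pos.roots.order} on every pair of indecomposables with nonzero $\lambda$-value (pairs with $\lambda = 0$ contribute nothing to $E$). For such $\alpha\prec\beta$ the vanishings $\dhom(M_\alpha,M_\beta) = 0$ and $\dext(M_\beta,M_\alpha) = 0$ reduce \eqref{eqn:chi.hom.ext} to $\lambda(\alpha,\beta) = \dhom(M_\beta,M_\alpha) + \dext(M_\alpha,M_\beta)$; summing $m_\alpha m_\beta$ times this over $\alpha\prec\beta$ and using the type-$A$ facts $\dhom(M_\alpha,M_\alpha) = 1$, $\dext(M_\alpha,M_\alpha) = 0$ rewrites $E$ as $\dim\Hom(M,M) - \sum_\beta m_\beta^2 + \dim\Ext(M,M)$ for $M = \bigoplus_\beta M_\beta^{m_\beta}$. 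Finally, $\codim_\C\Omega_{\bfs{m}^i} = \dim\Ext(M,M)$ together with $\dim\Hom(M,M) - \dim\Ext(M,M) = \chi(\gamma(i,\bullet),\gamma(i,\bullet))$ assembles the displayed expression for $E$ and completes the argument.
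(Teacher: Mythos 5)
Your proposal is correct in substance but follows a genuinely different route from the paper. The paper's proof of Proposition \ref{prop:rr.qalg.codim} is a two-line citation: since Lemma \ref{lem:rr.order.is.our.order} identifies the within-row order with the order of \eqref{eqn:rr.pos.roots.order}, the statement is exactly \cite[Lemma~5.1]{rr2013} applied to $\rowQ{i}$ with dimension vector $\gamma(i,\bullet)$. You instead re-derive that lemma from scratch inside $\hat\A_Q$: collapse both sides onto the basis element $y_{\gamma(i,\bullet)}$ via the iterated form of \eqref{eqn:qalg.rel}, compute the cross-block exponent $-\dim_\C\M_{\gamma(i,\bullet)}$ for the heads-then-tails factorization, and convert the exponent $E=\sum_{\alpha\prec\beta}m_\alpha m_\beta\lambda(\alpha,\beta)$ into $\dhom(M,M)+\dext(M,M)-\sum_\beta m_\beta^2$, finishing with $\codim_\C\Omega_{\bfs{m}^i}=\dext(M,M)$ and $\chi=\dhom-\dext$. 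I checked your sign and exponent bookkeeping (including $s_i=|\Gamma|-M$ and $E+\dim_\C\M_{\gamma(i,\bullet)}=2p_i$) and it is right. What your route buys is a self-contained proof that does not lean on the external reference; what the paper's route buys is brevity and the fact that the hard combinatorial content is already packaged in \cite{rr2013}.

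Two steps deserve explicit justification. First, ``invoking Lemma \ref{lem:rr.order.is.our.order} to identify the orderings on pairs with $\lambda\neq0$'' needs, in addition to part (a) of that lemma, the existence of the total order \eqref{eqn:rr.pos.roots.order} from \cite{rr2013}: for a pair with $\alpha\prec\beta$ and $\lambda(\alpha,\beta)\neq0$, the rr-order cannot place $\beta$ before $\alpha$ (by Lemma \ref{lem:rr.order.is.our.order}(a)), hence it places $\alpha$ before $\beta$, giving the vanishings $\dhom(M_\alpha,M_\beta)=0=\dext(M_\beta,M_\alpha)$. Second, when you rewrite $E$ as $\dhom(M,M)+\dext(M,M)-\sum_\beta m_\beta^2$, the off-diagonal pairs with $\lambda(\alpha,\beta)=0$ must contribute nothing to $\dhom(M,M)+\dext(M,M)$; this is true, but it requires the directedness of the module category of a Dynkin quiver (no two of $\Hom(M_\alpha,M_\beta)$, $\Hom(M_\beta,M_\alpha)$, $\Ext(M_\alpha,M_\beta)$, $\Ext(M_\beta,M_\alpha)$ can conspire to be nonzero with $\chi(\alpha,\beta)=\chi(\beta,\alpha)$ unless all four vanish), a fact you use silently. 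Likewise $\codim_\C\Omega_{\bfs{m}^i}=\dext(M,M)$ is standard but should be cited. With these points made explicit, your argument is complete.
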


\begin{proof}
Since our ordering within rows agrees with \eqref{eqn:rr.pos.roots.order} in the sense of Lemma \ref{lem:rr.order.is.our.order}, this is equivalent to \cite[Lemma~5.1]{rr2013} applied to the subquiver $\rowQ{i}$ with dimension vector $\gamma(i,\bullet)$.
\end{proof}

Observe that $\{y_{\epsilon_{i,j}}:(i,j)\in\hhs(i)\} \subset \hat\A_Q$ forms a commuting subset because the sub-quiver $\rowQ{i}$ is alternating. Similarly, the $\{y_{\epsilon_{i,j}}:(i,j)\in\hts(i)\}$ forms a commuting subset.

\begin{prop}
\label{prop:full.hhs.times.hts}
For any dimension vector $\gamma$ we have
\begin{equation}
\label{eqn:full.hhs.times.hts}
q^{\mathbf{up}(\gamma)} \yy_\hhs^\gamma \,\yy_\hts^\gamma = \yy_{\hhs(1)}^\gamma\,\yy_{\hts(1)}^\gamma\,\yy_{\hhs(2)}^\gamma\,\yy_{\hts(2)}^\gamma\, \cdots \, \yy_{\hhs(n)}^\gamma\,\yy_{\hts(n)}^\gamma.
\end{equation}
\end{prop}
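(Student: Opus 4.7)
The plan is to derive the identity by repeatedly applying the commutation relation \eqref{eqn:qalg.comm} and bookkeeping exactly which pairs produce nontrivial scalars. First I would verify that $\hhs$ and $\hts$ are each commuting subsets of simple root generators in $\hat\A_Q$. Two horizontal heads cannot be horizontally adjacent (both would be sinks in the same row subquiver), and they cannot be vertically adjacent either: $\hhs=\vts$, so two vertically adjacent heads would be two vertical sources at neighboring column-vertices, impossible in an alternating column subquiver. An identical argument works for $\hts=\vhs$. Consequently $\yy_\hhs^\gamma = \yy_{\hhs(1)}^\gamma \cdots \yy_{\hhs(n)}^\gamma$ and $\yy_\hts^\gamma = \yy_{\hts(1)}^\gamma \cdots \yy_{\hts(n)}^\gamma$, regardless of the order chosen inside each row-block.

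Writing $H_i := \yy_{\hhs(i)}^\gamma$ and $T_i := \yy_{\hts(i)}^\gamma$, the LHS of \eqref{eqn:full.hhs.times.hts} becomes $q^{\mathbf{up}(\gamma)}\, H_1 H_2 \cdots H_n\, T_1 T_2 \cdots T_n$. I would then reshuffle this into the interlaced form $H_1 T_1 H_2 T_2 \cdots H_n T_n$ by moving each $T_{i'}$ leftward past $H_{i'+1},\ldots,H_n$ one block at a time. By \eqref{eqn:qalg.comm}, each swap contributes a product of factors $q^{\gamma(u)\gamma(v)\lambda(v,u)}$ over simple roots $u=\epsilon_{(i',j')}\in \hts(i')$ and $v=\epsilon_{(i,j)}\in \hhs(i)$. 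Since $\lambda(v,u)=0$ unless $(i,j)$ and $(i',j')$ are connected by an arrow in $Q$, and since these blocks sit in distinct rows, the only nonzero contributions come from $i=i'+1$ and $j=j'$.

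In that remaining case, $v$ is a vertical tail and $u$ is a vertical head, so the vertical arrow in column $j$ between rows $i'$ and $i'+1$ necessarily points upward — it is the arrow $(i'+1,j)\to (i',j)$. Hence $\lambda(v,u)=+1$ and the swap contributes exactly $q^{\gamma(t(c))\gamma(h(c))}$ for that upward arrow $c$. Summing these exponents over all upward arrows of $Q$ produces the total accumulated factor $q^{-\mathbf{up}(\gamma)}$ by the definition \eqref{eqn:up.gamma.defn}, which cancels the prefactor $q^{\mathbf{up}(\gamma)}$ and yields precisely the RHS. There is no serious obstacle here; the only subtlety is the sign/direction conventions — one must track that moving a head \emph{left} past a tail uses $\lambda(v,u)$ (not $\lambda(u,v)$), and that in the only surviving case the vertical arrow must be upward, which is exactly what makes $\mathbf{up}(\gamma)$ the correct exponent to absorb.
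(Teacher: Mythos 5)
Your proof is correct and takes essentially the same route as the paper: both arguments reduce to observing that the only non-commuting pairs between head-blocks and tail-blocks are vertically adjacent vertices joined by an upward arrow $c$, each swap contributing $q^{\gamma(t(c))\gamma(h(c))}$, and summing these exponents over all upward arrows recovers exactly $-\mathbf{up}(\gamma)$. The paper shuffles the heads $\yy_{\hhs(i+1)}^\gamma$ leftward past the tails $\yy_{\hts(i)}^\gamma$ instead of moving tails past heads, but the bookkeeping is identical.
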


\begin{proof}
Fix $i\in[n-1]$ and let $v\in\hhs(i+1)$. Observe that $y_v$ commutes with $y_u$ for every $u\in\hhs(j)$ with $j\leq i$ and every $u\in\hts(j)$ for $j<i$. Thus, we need to consider the result of commuting $y_v$ past $y_u$ for $u\in \hts(i)$. In fact, $y_u y_v = y_v y_u$ unless $u$ and $v$ are oriented in the quiver as below:
\begin{center}
\begin{tikzpicture}[->,semithick,auto]
\node (1) at (-1.5,.5) {$\bullet$};
\node (2) at (1.5,.5) {$\bullet$};
\node (3) at (-1.5,-.5) {$\bullet$};
\node (4) at (1.5,-.5) {$\bullet$};
\node (u) at (0,.5) {$u$};
\node (v) at (0,-.5) {$v$};
\node (c) at (.2,0) {$c$};

\path
(u) edge (1)
(u) edge (2)
(3) edge (v)
(4) edge (v)
(v) edge (u);
\end{tikzpicture}
\end{center}
In this case, we obtain
\[
y_u\, y_v = q^{\lambda(\epsilon_u,\epsilon_v)}\,y_v\, y_u = q^{-1}\,y_v\, y_u
\]
and hence
\[
y_u^{\gamma(u)}\,y_v^{\gamma(v)} = q^{-\gamma(u)\gamma(v)}\,y_v^{\gamma(v)}\,y_u^{\gamma(u)}.
\]
Observe that $v=t(c)$ and $u=h(c)$ for the upward pointing arrow $c$. Doing this for every $v\in\hhs(i+1)$ and for all $i\in[n-1]$ produces the result.
\end{proof}

The analogous statements to Propositions \ref{prop:rr.qalg.codim} and \ref{prop:full.hhs.times.hts} are true (with analogous proofs) when we consider the columns and vertical strata of the quiver. For example, in that case the analogous statement to Proposition \ref{prop:full.hhs.times.hts} is that
\begin{equation}
\label{eqn:full.hhs.times.hts}
q^{\mathbf{left}(\gamma)} \yy_{\vhs}^\gamma \, \yy_{\vts}^\gamma = \yy_{\vhs(1)}^\gamma\, \yy_{\vts(1)}^\gamma \, \yy_{\vhs(2)}^\gamma\,\yy_{\vts(2)}^\gamma \,\cdots \,\yy_{\vhs(m)}^\gamma\,\yy_{\vts(m)}^\gamma.
\end{equation}

\subsection{Counting superpotential shifts in the quantum algebra}
\label{ss:count.w.qalg}

As in Section \ref{s:mt} let $a = nn^\prime(n^\prime+1)/2$. Let
\begin{equation}
\label{eqn:anam.order}
\beta_{1} \prec \beta_{2} \prec \cdots \prec \beta_{a}
\end{equation}
be an allowed ordering on $\hpr$ according to the rules of Section \ref{s:order.roots}.

\begin{prop}
\label{prop:w.qalg}
Let $\eta$ be a horizontal stratum for $A_{n}\square A_{n^\prime}$ with $\bfs{m}^\bullet$ the corresponding horizontal Kostant series. Let $m_{\beta_l} = m_l$ denote the component of $\bfs{m}^\bullet$ corresponding to the ordered root $\beta_l$ from Equation \eqref{eqn:anam.order}. Let $\gamma = \sum_{l=1}^{a} m_{l}\beta_{l}$ be the resulting dimension vector. The following identity holds in the quantum algebra
\begin{equation}
\label{eqn:w.qalg.hor}
y_{\beta_{1}}^{m_{{1}}}\cdots y_{\beta_a}^{m_{a}}
=
q^{\dd(\gamma)+\w(\eta)}\,
\left(
\prod_{\sigma \in \hpri{1}}^{\rightarrow} y_{\sigma}^{m_{\sigma}}
\right)
\cdots
\left(
\prod_{\tau \in \hpri{n}}^{\rightarrow} y_{\tau}^{m_{\tau}}
\right)
\end{equation}
where the arrows atop each product symbol indicate that the terms are multiplied in the orders induced on $\hpri{1},\ldots, \hpri{n}$ from the ordering of Equation \eqref{eqn:anam.order}.
\end{prop}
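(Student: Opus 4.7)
The plan is to transform the LHS of \eqref{eqn:w.qalg.hor} into the RHS purely by repeated application of the commutation relation \eqref{eqn:qalg.comm}. Both sides contain exactly the same multiset of factors $\{y_{\beta}^{m_\beta}\}_{\beta\in\hpr}$, so the proposition reduces to bookkeeping of the $q$-exponent acquired during the rearrangement; no signs appear, since we never invoke \eqref{eqn:qalg.rel}.

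First, if $\alpha,\beta\in\hpri{i}$ lie in the same row, then their relative $\prec$-order on the LHS agrees with their relative order inside the $i$-th factor on the RHS (which is by construction the restriction of $\prec$ to $\hpri{i}$), so no same-row swap occurs. For pairs from different rows $i<j$ the RHS forces $\alpha\in\hpri{i}$ to appear before $\beta\in\hpri{j}$; this agrees with the LHS when $\alpha\prec\beta$ globally, but when $\beta\prec\alpha$ one commutes $y_\alpha^{m_\alpha}$ past $y_\beta^{m_\beta}$, picking up $q^{m_\alpha m_\beta\lambda(\beta,\alpha)}$. Because arrows in $Q=A_n\square A_{n'}$ connect only vertices within a row or between adjacent rows, $\lambda(\beta,\alpha)$ vanishes when $|i-j|>1$, so only adjacent-row swaps contribute. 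Using $\lambda(\beta,\alpha)=\upform{\alpha}{\beta}-\downform{\alpha}{\beta}$ (Section~\ref{ss:w.contrib.shifts}), the accumulated exponent is
\[
E \;=\; \sum_{i=1}^{n-1}\sum_{\substack{\alpha\in\hpri{i},\,\beta\in\hpri{i+1}\\ \beta\prec\alpha}} m_\alpha m_\beta\bigl(\upform{\alpha}{\beta}-\downform{\alpha}{\beta}\bigr).
\]

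The main step is then to verify $E=\dd(\gamma)+\w(\eta)$ by a direct match of the definitions. Expanding $\gamma(i,j)=\sum_{\alpha\in\hpri{i}} m_\alpha d_\alpha^j$ in \eqref{eqn:dd.gamma.defn} yields
\[
\dd(\gamma)\;=\;-\sum_{i=1}^{n-1}\sum_{\alpha\in\hpri{i},\,\beta\in\hpri{i+1}} m_\alpha m_\beta\,\downform{\alpha}{\beta},
\]
while \eqref{eqn:w.beta.down.up} together with the ordering rules \eqref{eqn:row.ord.without} give $\supc(\alpha,\beta)=\downform{\alpha}{\beta}$ when $\alpha\prec\beta$ and $\supc(\alpha,\beta)=\upform{\alpha}{\beta}$ when $\beta\prec\alpha$, so
\[
\w(\eta) \;=\; \sum_{i=1}^{n-1}\Bigl[\sum_{\alpha\prec\beta} m_\alpha m_\beta\,\downform{\alpha}{\beta} \;+\; \sum_{\beta\prec\alpha} m_\alpha m_\beta\,\upform{\alpha}{\beta}\Bigr].
\]
When added, the $\alpha\prec\beta$ portions of $\downform$ in $\dd(\gamma)$ and $\w(\eta)$ cancel, and what remains is $\sum_{\beta\prec\alpha} m_\alpha m_\beta(\upform{\alpha}{\beta}-\downform{\alpha}{\beta})=E$. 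The only subtlety I anticipate is the case $\lambda(\alpha,\beta)=0$, where either clause of \eqref{eqn:row.ord.without} is permitted; but $\lambda=0$ forces $\upform=\downform$, so both formulas for $\supc$ give the same value and the identity is independent of how such ties were broken when ordering $\hpr$. This combinatorial bookkeeping is the only real obstacle, and it is mechanical once the definitions are unpacked.
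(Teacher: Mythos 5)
Your argument is correct and is essentially the paper's own proof: the paper likewise observes that only adjacent-row pairs contribute (reducing to $n=2$), computes the commutation exponent as a sum of $\lambda(\tau,\sigma)m_\sigma m_\tau$ over out-of-order cross-row pairs, and matches it with $\dd(\gamma)+\w(\eta)$ by expanding $\dd(\gamma)$ as the all-pairs $\downform{\sigma}{\tau}$ sum and splitting $\w(\eta)$ according to the two clauses of \eqref{eqn:w.beta.down.up}. The only cosmetic difference is that the paper partitions pairs by the sign of $\lambda$ (its sets $\Neg$ and $\Pos$) rather than by the chosen order, a distinction your remark about the $\lambda=0$ ties already shows is immaterial.
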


\begin{remark}
Observe that $\dd(\gamma)$ can be rewritten as the sum
\begin{subequations}
\label{eqn:dd.gamma.vts}
\begin{align}
-\dd(\gamma)
&=
\sum_{{(i,j)\in \vts}}\gamma(i,j)\gamma(i+1,j)
\\
&=\sum_{{(i+1,j)\in \vhs}}\gamma(i,j)\gamma(i+1,j)
\end{align}
\end{subequations}
where we interpret $\gamma(l,k) = 0$ if $l>n$ or $k>n^\prime$. Furthermore, the analogous result holds for a vertical stratum and the $\beta_l$ vertical positive roots. There are $b = n^\prime n(n+1)/2$ such roots. In this case
\begin{equation}
\label{eqn:w.qalg.vert}
y_{\beta_{1}}^{m_{{1}}}\cdots y_{\beta_{b}}^{m_{{b}}}
=
q^{\rr(\gamma)+\w(\eta)}\,
\left(
\prod_{\sigma \in \vprj{1}}^{\rightarrow} y_{\sigma}^{m_{\sigma}}
\right)
\cdots
\left(
\prod_{\tau \in \vprj{m}}^{\rightarrow} y_{\tau}^{m_{\tau}}
\right)
\end{equation}
where the expression for $\rr(\gamma)$ can be rewritten as
\begin{subequations}
\label{eqn:rr.gamma.hts}
\begin{align}
-\rr(\gamma)
&=\sum_{{(i,j)\in \hts}}\gamma(i,j)\gamma(i,j+1)
\\
&=\sum_{{(i,j+1)\in \hhs}}\gamma(i,j)\gamma(i,j+1).
\end{align}
\end{subequations}
The proof of Equation \eqref{eqn:w.qalg.vert} is completely analogous to Proposition \ref{prop:w.qalg}, which we now give.
\end{remark}

\begin{proof}[Proof of Proposition \ref{prop:w.qalg}]
Define $p$ by the equation
\begin{equation}
\label{eqn:p.define}
y_{\beta_{1}}^{m_{{1}}}\cdots y_{\beta_a}^{m_{a}}
=
q^{p}\,
\left(
\prod_{\sigma \in \hpri{1}}^{\rightarrow} y_{\sigma}^{m_{\sigma}}
\right)
\cdots
\left(
\prod_{\tau \in \hpri{n}}^{\rightarrow} y_{\tau}^{m_{\tau}}
\right).
\end{equation}
Notice that if $\beta' \in \hpri{i'}$ and $\beta'' \in \hpri{i''}$ with $|i'-i''|\geq 2$, then $y_{\beta'}$ and $y_{\beta''}$ commute in $\hat{\A}_\Q$. Hence the only contributions to $p$ come from commuting $y_{\beta'}$ past $y_{\beta''}$ on the left-hand side when $i''-i'=1$. Thus we may reduce to the case $n=2$, i.e.\ to the quiver $A_2\square A_{n^\prime}$.

Throughout the rest of the proof, we let $\sigma\in\hpri{1}$ and $\tau\in\hpri{2}$. Observe that the situation
\[
\cdots y_{\tau}^{m_\tau} \cdots y_{\sigma}^{m_\sigma} \cdots
\]
occurs on the left-hand side of \eqref{eqn:p.define} only if $\lambda(\tau,\sigma)\leq 0$ due to our choice of ordering in \eqref{eqn:hor.ord} and consequently in \eqref{eqn:anam.order}. Motivated by this, set
\begin{gather*}
\Neg = \{(\sigma,\tau):\sigma\in\hpri{1},\tau\in\hpri{2},\lambda(\tau,\sigma)\leq 0\},
\\
\Pos = \{(\sigma,\tau):\sigma\in\hpri{1},\tau\in\hpri{2},\lambda(\tau,\sigma)>0\}
\end{gather*}
so that we see $\hpri{1}\times\hpri{2}$ is the disjoint union $\Neg\union\Pos$. Moreover, we can now write
\[
p = \sum_{(\sigma,\tau)\in \Neg} \lambda(\tau,\sigma) m_\sigma m_\tau.
\]
On the other hand, we have by Equation \eqref{eqn:w.beta.down.up}
\begin{align*}
\w(\eta) & = \sum_{\sigma\in\hpri{1}} \sum_{\tau\in\hpri{2}} \supc(\sigma,\tau)m_\sigma m_\tau \\
	&= \sum_{(\sigma,\tau)\in \Neg} \supc(\sigma,\tau)m_\sigma m_\tau
		+ \sum_{(\sigma,\tau)\in \Pos} \supc(\sigma,\tau)m_\sigma m_\tau \\
	&= \sum_{(\sigma,\tau)\in \Neg} \upform{\sigma}{\tau} m_\sigma m_\tau
		+ \sum_{(\sigma,\tau)\in \Pos} \downform{\sigma}{\tau} m_\sigma m_\tau.
\end{align*}
Furthermore, we have
\begin{align*}
-\dd(\gamma) & = \sum_{(1,j)\in\vts}\gamma(1,j)\gamma(2,j)
	= \sum_{(1,j)\in\vts}\left(\sum_{j\in\sigma} m_\sigma \right)\left( \sum_{j\in\tau} m_\tau\right)\\
	\intertext{where by $j\in \beta$ for a root $\beta$, we mean that $j$ is in the interval $[k(\beta),\ell(\beta)]$; see Definition \ref{defn:pos.rts.intersect}. This is further equal to}
	&= \sum_{(1,j)\in\vts} \left(\sum_{j\in\sigma,\tau} m_\sigma m_\tau \right)
	= \sum_{(\sigma,\tau)\in\hpri{1}\times\hpri{2}} \downform{\sigma}{\tau}m_\sigma m_\tau
\end{align*}
where in the last equality we have changed the order of summation. Putting together our expressions for $\w(\eta)$ and $\dd(\gamma)$, we obtain that $\dd(\gamma)+\w(\eta)$ is equal to
\begin{align*}
	& -\left( \sum_{\sigma\in\hpri{1}}\sum_{\sigma\in\hpri{2}} \downform{\sigma}{\tau}m_\sigma m_\tau \right)
	 	+ \left( \sum_{(\sigma,\tau)\in \Neg} \upform{\sigma}{\tau} m_\sigma m_\tau \right)
		+ \left( \sum_{(\sigma,\tau)\in \Pos} \downform{\sigma}{\tau} m_\sigma m_\tau \right)  \\
	=& \left(\sum_{(\sigma,\tau)\in \Neg} \upform{\sigma}{\tau} m_\sigma m_\tau\right)
		- \left(\sum_{(\sigma,\tau)\in \Neg} \downform{\sigma}{\tau} m_\sigma m_\tau\right)
	= \sum_{(\sigma,\tau)\in \Neg}
		\left(\upform{\sigma}{\tau} - \downform{\sigma}{\tau} \right) m_\sigma m_\tau
\end{align*}
which is further equal to $ \sum_{(\sigma,\tau)\in \Neg} \lambda(\tau,\sigma) m_\sigma m_\tau = p,$ as desired.
\end{proof}

\section{Proof of the main theorem}
\label{s:pmt}

Fix a dimension vector $\gamma$. Theorem \ref{thm:Kaz.spec.seq.RDC} implies that the Poincar\'e series $\cP[\coho^*_{\G_\gamma}(\M_\gamma;W_\gamma)]$ is equal to the sum over horizontal strata
\begin{subequations}
\begin{align}
&\sum_{\eta\in\Hor(\M_\gamma)} q^{\w(\eta)+\codim_\C(\eta;\M_\gamma)} \cP_\eta
\label{eqn:hor.q.series} \\
\intertext{and the sum over vertical strata}
&\sum_{\theta\in\Ver(\M_\gamma)} q^{\w(\theta)+\codim_\C(\theta;\M_\gamma)} \cP_\theta.
\label{eqn:ver.q.series}
\end{align}
\end{subequations}
That is, we have the following corollary to Theorem \ref{thm:Kaz.spec.seq.RDC}.

\begin{cor}
\label{cor:poinc.h.equals.poinc.v}
For every dimension vector $\gamma$, we have the $q$-series identity
\[
\pushQED{\qed}
 \sum_{\eta\in\Hor(\M_\gamma)} q^{\w(\eta)+\codim_\C(\eta;\M_\gamma)} \cP_\eta
 	=
 \sum_{\theta\in\Ver(\M_\gamma)} q^{\w(\theta)+\codim_\C(\theta;\M_\gamma)} \cP_\theta.
\qedhere\popQED
\]
\end{cor}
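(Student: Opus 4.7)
The plan is to read the identity off directly from Theorem~\ref{thm:Kaz.spec.seq.RDC}. Both spectral sequences constructed there converge to the same target, namely $\coho^{*}_{\G_\gamma}(\M_\gamma;W_\gamma)$, and each degenerates at the $E_1$ page. Consequently, the total dimension of the limit equals the total dimension of either $E_1$ page, bidegree by bidegree. Moreover, every $\coho^{*}(B\G_\eta)$ is concentrated in even degrees (since $\G_\eta$ is homotopy equivalent to a product of unitary groups by Proposition~\ref{prop:G.eta.hmtpy}), so the Thom-shifted groups $\coho^{j-2\w(\eta)}(B\G_\eta)$ appearing in \eqref{eqn:Kaz.spec.seq.hor} and \eqref{eqn:Kaz.spec.seq.ver} are nonzero only in even $j$, and similarly the codimensions $i = \codim_\R(\eta;\M_\gamma) = 2\codim_\C(\eta;\M_\gamma)$ are even. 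Hence the Poincar\'e series of the limit can unambiguously be written as a power series in $q$ obtained from $\sum_{i,j} q^{(i+j)/2} \dim_\R E_1^{ij}$.

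Applying this to the horizontal spectral sequence, for each fixed $\eta\in\Hor(\M_\gamma)$ with $i := \codim_\R(\eta;\M_\gamma)$, the substitution $k = j-2\w(\eta)$ gives
\[
\sum_j q^{(i+j)/2} \dim_\R \coho^{j-2\w(\eta)}(B\G_\eta) = q^{\,\codim_\C(\eta;\M_\gamma)+\w(\eta)} \cdot \cP[\coho^{*}(B\G_\eta)].
\]
By Proposition~\ref{prop:equivariant.eta.BG.eta} and Definition~\ref{defn:Poincare.series.eta}, the latter Poincar\'e series equals $\cP_\eta$. Summing over $\eta$ yields
\[
\cP[\coho^{*}_{\G_\gamma}(\M_\gamma;W_\gamma)]
 = \sum_{\eta\in\Hor(\M_\gamma)} q^{\,\w(\eta)+\codim_\C(\eta;\M_\gamma)}\, \cP_\eta.
\]
Repeating the identical argument for the second spectral sequence of Theorem~\ref{thm:Kaz.spec.seq.RDC}, using \eqref{eqn:Kaz.spec.seq.ver} in place of \eqref{eqn:Kaz.spec.seq.hor}, gives
\[
\cP[\coho^{*}_{\G_\gamma}(\M_\gamma;W_\gamma)]
 = \sum_{\theta\in\Ver(\M_\gamma)} q^{\,\w(\theta)+\codim_\C(\theta;\M_\gamma)}\, \cP_\theta.
\]
Equating the two right-hand sides completes the proof. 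Since all the hard work—constructing the two stratifications, identifying the isotropy subgroup via Proposition~\ref{prop:equivariant.eta.BG.eta}, computing the superpotential-induced shift via Theorem~\ref{thm:our.hmtpy}/Theorem~\ref{thm:poincare.series.stratum}, and establishing $E_1$-degeneration—has already been done in the preceding sections, the only ``work'' here is bookkeeping with the bigrading, so no essential obstacle remains.
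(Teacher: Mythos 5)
Your proposal is correct and follows exactly the route the paper takes: the corollary is read off from Theorem~\ref{thm:Kaz.spec.seq.RDC} by equating the Poincar\'e series of the two degenerate $E_1$ pages (both converging to $\coho^*_{\G_\gamma}(\M_\gamma;W_\gamma)$), with the stratum contributions identified via Proposition~\ref{prop:equivariant.eta.BG.eta}, Theorem~\ref{thm:poincare.series.stratum}, and Definition~\ref{defn:Poincare.series.eta}. The bookkeeping with the Thom shift and evenness of degrees is exactly the paper's implicit argument, so nothing is missing.
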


Let $y_{ij}$ denote the element $y_{\epsilon_{ij}}$ in the quantum algebra and set $\bfs{y} = \prod_{i,j} y_{ij}^{\gamma(i,j)}$ where the product is taken over a fixed ordering of the variables $y_{ij}$.

The remainder of this section will be dedicated to proving that \eqref{eqn:hor.q.series} is the coefficient of $\bfs{y}$ on the left-hand side of Equation \eqref{eqn:mt} up to a power of $q$ which depends only on $\gamma$ and the fixed ordering chosen in the product $\bfs{y}$. Similarly \eqref{eqn:ver.q.series} is the coefficient of $\bfs{y}$ on the right-hand side of \eqref{eqn:mt} up to the \emph{same} power of $q$. Because of Corollary \ref{cor:poinc.h.equals.poinc.v}, this will prove our main Theorem \ref{thm:mt}. First, we verify Corollary \ref{cor:poinc.h.equals.poinc.v} in the case of our running example.

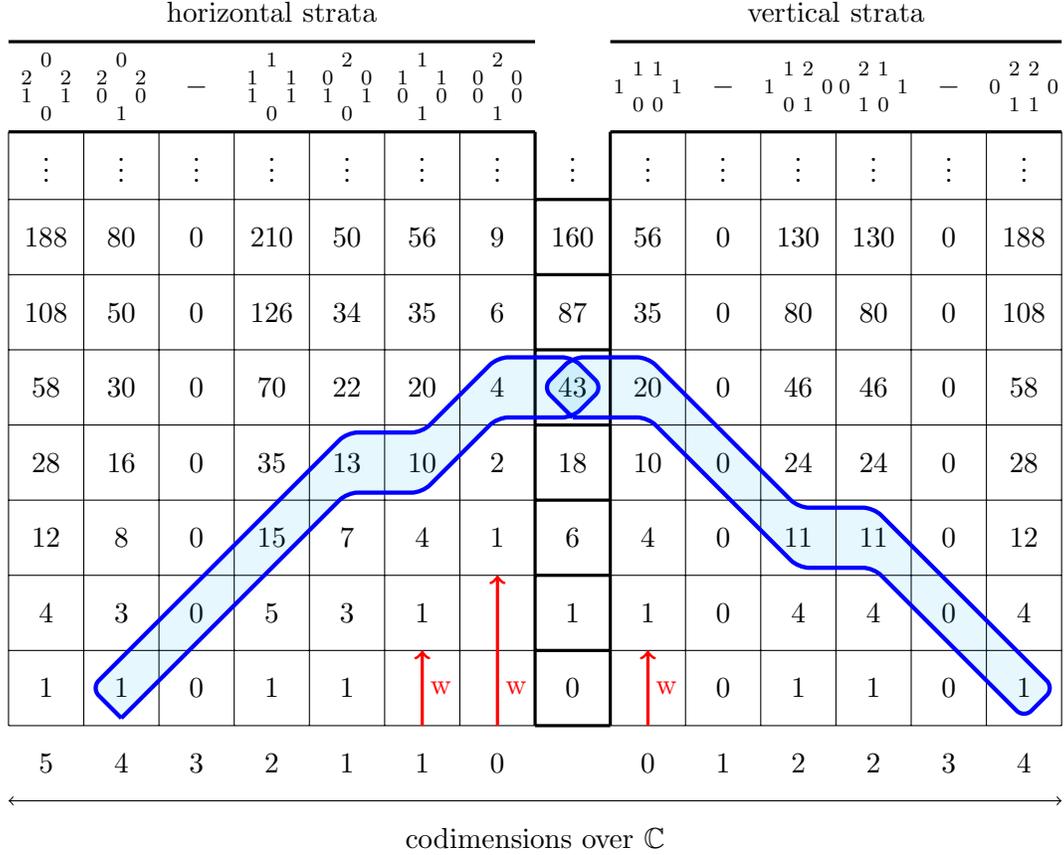
\begin{figure}
	\begin{tikzpicture}
		%gridlines
		\draw[step=1cm,black,thin] (0,0) grid (7,7.9);
		\draw[step=1cm,black,thin] (8,0) grid (14,7.9);
		\draw[step=1cm,black,very thick] (7,0) grid (8,7.9);
		
		\draw[<->] (0,-1)--(14,-1);
		
		%w-contribution arrows
		\draw[->,color=red, very thick](6.5,0)--(6.5,2);
		\node[color=red] at (6.75,0.5) {$\mathrm{w}$};
		
		\draw[->,color=red, very thick](5.5,0)--(5.5,1);
		\node[color=red] at (5.75,0.5) {$\mathrm{w}$};
		
		\draw[->,color=red, very thick](8.5,0)--(8.5,1);
		\node[color=red] at (8.75,0.5) {$\mathrm{w}$};
		
		%horizontal strata information
		\node at (3.5,9.5) {horizontal strata};
		\draw[very thick](0,9.1)--(7,9.1);
		\draw[very thick](0,7.9)--(7,7.9);
		\node at (6.5,8.5) {$\smallhstrata200001$};
		\node at (5.5,8.5) {$\smallhstrata111001$};
		\node at (4.5,8.5) {$\smallhstrata200110$};
		\node at (3.5,8.5) {$\smallhstrata111110$};
		\node at (2.5,8.5) {$-$};
		\node at (1.5,8.5) {$\smallhstrata022001$};
		\node at (0.5,8.5) {$\smallhstrata022110$};
		
		%vertical strata information
		\node at (11,9.5) {vertical strata};
		\draw[very thick](8,9.1)--(14,9.1);
		\draw[very thick](8,7.9)--(14,7.9);
		\node at (8.5,8.5) {$\smallvstrata110101$};
		\node at (9.5,8.5) {$-$};
		\node at (10.5,8.5) {$\smallvstrata110210$};
		\node at (11.5,8.5) {$\smallvstrata021101$};
		\node at (12.5,8.5) {$-$};
		\node at (13.5,8.5) {$\smallvstrata021210$};
		
		%vdots at tops of columns
		\node at (0.5,7.5) {$\vdots$};
		\node at (1.5,7.5) {$\vdots$};
		\node at (2.5,7.5) {$\vdots$};
		\node at (3.5,7.5) {$\vdots$};
		\node at (4.5,7.5) {$\vdots$};
		\node at (5.5,7.5) {$\vdots$};
		\node at (6.5,7.5) {$\vdots$};
		\node at (7.5,7.5) {$\vdots$};
		\node at (8.5,7.5) {$\vdots$};
		\node at (9.5,7.5) {$\vdots$};
		\node at (10.5,7.5) {$\vdots$};
		\node at (11.5,7.5) {$\vdots$};
		\node at (12.5,7.5) {$\vdots$};
		\node at (13.5,7.5) {$\vdots$};

		%betti numbers:
		
		%hstrata codim=0
		\node at (6.5,2.5) {$1$};
		\node at (6.5,3.5) {$2$};
		\node at (6.5,4.5) {$4$};
		\node at (6.5,5.5) {$6$};
		\node at (6.5,6.5) {$9$};
		
		%hstrata codim=1, w=1
		\node at (5.5,1.5) {$1$};
		\node at (5.5,2.5) {$4$};
		\node at (5.5,3.5) {$10$};
		\node at (5.5,4.5) {$20$};
		\node at (5.5,5.5) {$35$};
		\node at (5.5,6.5) {$56$};
		
		%hstrata codim=1, w=0
		\node at (4.5,0.5) {$1$};
		\node at (4.5,1.5) {$3$};
		\node at (4.5,2.5) {$7$};
		\node at (4.5,3.5) {$13$};
		\node at (4.5,4.5) {$22$};
		\node at (4.5,5.5) {$34$};
		\node at (4.5,6.5) {$50$};
		
		%hstrata codim=2
		\node at (3.5,0.5) {$1$};
		\node at (3.5,1.5) {$5$};
		\node at (3.5,2.5) {$15$};
		\node at (3.5,3.5) {$35$};
		\node at (3.5,4.5) {$70$};
		\node at (3.5,5.5) {$126$};
		\node at (3.5,6.5) {$210$};
		
		%hstrata codim=3
		\node at (2.5,0.5) {$0$};
		\node at (2.5,1.5) {$0$};
		\node at (2.5,2.5) {$0$};
		\node at (2.5,3.5) {$0$};
		\node at (2.5,4.5) {$0$};
		\node at (2.5,5.5) {$0$};
		\node at (2.5,6.5) {$0$};
		
		%hstrata codim=4
		\node at (1.5,0.5) {$1$};
		\node at (1.5,1.5) {$3$};
		\node at (1.5,2.5) {$8$};
		\node at (1.5,3.5) {$16$};
		\node at (1.5,4.5) {$30$};
		\node at (1.5,5.5) {$50$};
		\node at (1.5,6.5) {$80$};
		
		%hstrata codim=5
		\node at (0.5,0.5) {$1$};
		\node at (0.5,1.5) {$4$};
		\node at (0.5,2.5) {$12$};
		\node at (0.5,3.5) {$28$};
		\node at (0.5,4.5) {$58$};
		\node at (0.5,5.5) {$108$};
		\node at (0.5,6.5) {$188$};
		
		%vstrata codim=0
		\node at (8.5,1.5) {$1$};
		\node at (8.5,2.5) {$4$};
		\node at (8.5,3.5) {$10$};
		\node at (8.5,4.5) {$20$};
		\node at (8.5,5.5) {$35$};
		\node at (8.5,6.5) {$56$};

		%vstrata codim=1
		\node at (9.5,0.5) {$0$};
		\node at (9.5,1.5) {$0$};
		\node at (9.5,2.5) {$0$};
		\node at (9.5,3.5) {$0$};
		\node at (9.5,4.5) {$0$};
		\node at (9.5,5.5) {$0$};
		\node at (9.5,6.5) {$0$};
		
		%vstrata codim=2
		\node at (10.5,0.5) {$1$};
		\node at (10.5,1.5) {$4$};
		\node at (10.5,2.5) {$11$};
		\node at (10.5,3.5) {$24$};
		\node at (10.5,4.5) {$46$};
		\node at (10.5,5.5) {$80$};
		\node at (10.5,6.5) {$130$};
		
		%...and...
		\node at (11.5,0.5) {$1$};
		\node at (11.5,1.5) {$4$};
		\node at (11.5,2.5) {$11$};
		\node at (11.5,3.5) {$24$};
		\node at (11.5,4.5) {$46$};
		\node at (11.5,5.5) {$80$};
		\node at (11.5,6.5) {$130$};
		
		%vstrata codim=3
		\node at (12.5,0.5) {$0$};
		\node at (12.5,1.5) {$0$};
		\node at (12.5,2.5) {$0$};
		\node at (12.5,3.5) {$0$};
		\node at (12.5,4.5) {$0$};
		\node at (12.5,5.5) {$0$};
		\node at (12.5,6.5) {$0$};
		
		%vstrata codim=4
		\node at (13.5,0.5) {$1$};
		\node at (13.5,1.5) {$4$};
		\node at (13.5,2.5) {$12$};
		\node at (13.5,3.5) {$28$};
		\node at (13.5,4.5) {$58$};
		\node at (13.5,5.5) {$108$};
		\node at (13.5,6.5) {$188$};

		%codimension lists:
		
		%for horizontal strata
		\node at (7,-1.5) {codimensions over $\C$};
		\node at (0.5,-0.5) {$5$};
		\node at (1.5,-0.5) {$4$};
		\node at (2.5,-0.5) {$3$};
		\node at (3.5,-0.5) {$2$};
		\node at (4.5,-0.5) {$1$};
		\node at (5.5,-0.5) {$1$};
		\node at (6.5,-0.5) {$0$};
		
		%for vertical strata
		\node at (8.5,-0.5) {$0$};
		\node at (9.5,-0.5) {$1$};
		\node at (10.5,-0.5) {$2$};
		\node at (11.5,-0.5) {$2$};
		\node at (12.5,-0.5) {$3$};
		\node at (13.5,-0.5) {$4$};
		
		%totals from spectral sequence addition
		\node at (7.5,0.5) {$0$};
		\node at (7.5,1.5) {$1$};
		\node at (7.5,2.5) {$6$};
		\node at (7.5,3.5) {$18$};
		\node at (7.5,4.5) {$43$};
		\node at (7.5,5.5) {$87$};
		\node at (7.5,6.5) {$160$};
		
		%example snake on hstrata side
		\filldraw[fill=cyan, fill opacity=0.1, draw=blue, ultra thick, rounded corners]
		(1.5,.1) -- (1.1,.5) -- (1.5,.9) --
		(4.5,3.9) -- (5.5,3.9) -- (6.5, 4.9) --
		(7.5,4.9) -- (7.9,4.5) -- (7.5,4.1) --
		(6.5,4.1) -- (5.5,3.1) -- (4.5,3.1) -- (1.5,.1);
		
		%example snake on vstrata side
		\filldraw[fill=cyan, fill opacity=0.1, draw=blue, ultra thick, rounded corners]
		(7.5,4.1) -- (7.1,4.5) -- (7.5,4.9) --
		(8.5,4.9) -- (10.5,2.9) -- (11.5,2.9) --
		(13.5,.9) -- (13.9,.5) -- (13.5,.1) --
		(11.5,2.1) -- (10.5,2.1) -- (8.5,4.1) -- (7.5,4.1);
			
		\end{tikzpicture}
\caption{Betti tables illustrating the Kazarian spectral sequence argument applied to the horizontal and vertical strata for $A_2\square A_2$ as in Example \ref{ex:qseries.ident}. The number in the $r$-th row (counting from the bottom) and in the column corresponding to the stratum $\eta$ is the coefficient of $q^r$ in the series expansion of $q^{\w(\eta)}\,\cP_\eta$. Summing either the numbers coming from horizontal or vertical strata inside the blue ``snake'' results in the answer $43$.}
\label{fig:betti.tables}
\end{figure}

\begin{example}
\label{ex:qseries.ident}
Again consider $S$ with the dimension vector $\gamma=\left(\begin{smallmatrix} 2&2\\1&1\end{smallmatrix}\right)$ as in Example \ref{ex:hstrata.vstrata}. We obtain the following $q$-series identity
\begin{align}
\label{eqn:2211.qseries}
\begin{split}
q^{2}\,\curly P_2 \curly P_1 + q^2\,\curly P_1^4 + q^1\,\curly P_2 \curly P_1^2 &+ q^2\,\curly P_1^5 + q^4\,\curly P_2^2\curly P_1 + q^5\,\curly P_2^2\curly P_1^2 \\
&=
q^1\curly P_{1}^{4} + q^2\,\curly P_{2}\curly P_{1}^{3} + q^2\,\curly P_{2}\curly P_{1}^{3} + q^4\curly P_{2}^{2}\curly P_{1}^{2}
\end{split}
\end{align}
where each term $q^{p}\,\curly P_{2}^{l}\curly P_{1}^{k}$ corresponds to data on a single stratum. The exponent $p$ is the degree shift required by the sum of the codimension and superpotential shift of that stratum. For each horizontal (respectively vertical) stratum $\eta$ (resp.~$\theta$), the product $\cP_2^l\cP_1^k$ is the Poincar\'e series $\cP_\eta$ (resp.~$\cP_\theta$). The terms on the left correspond to the $6$ horizontal strata and the terms on the right correspond to the $4$ vertical strata. This is illustrated in the Betti table of Figure \ref{fig:betti.tables}. Observe that without the superpotential contributions $\w$, summing diagonally in the Betti table still gives a true identity (this would be the Betti table for ordinary cohomology, not rapid decay), which corresponds to Equation \eqref{eqn:55term}, i.e.~it is \emph{not} independent of the pentagon identity. The superpotential contributions are the added ingredient for the Keller identity, c.f.~Equation \eqref{eqn:55termKeller}. Alternatively, the series identity \eqref{eqn:2211.qseries} is also obtained by computing the coefficient of $y_{1}^{2}y_{2}^{2}y_{3}^{1}y_{4}^{1}$ on both sides of Equation \eqref{eqn:dilog.S.2211}.
\end{example}

\begin{lem}
\label{lem:switch.HH.HT}
We have the following identity in $\A_Q$.
\begin{equation}
\label{eqn:switch.HH.HT}
  \yy_\vhs^\gamma \, \yy_\vts^\gamma  =
 	q^{\vip(\gamma) - \hip(\gamma)}\,
 		\yy_\hhs^\gamma \, \yy_\hts^\gamma.
\end{equation}
\end{lem}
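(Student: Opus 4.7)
The plan is to exploit the crucial observation already noted in Section~\ref{ss:not.Q} that $\hhs = \vts$ and $\vhs = \hts$, which gives $\yy_\vhs^\gamma = \yy_\hts^\gamma$ and $\yy_\vts^\gamma = \yy_\hhs^\gamma$. Consequently the identity reduces to the statement that in the quantum algebra $\A_Q$,
\[
\yy_\hts^\gamma\,\yy_\hhs^\gamma = q^{\vip(\gamma)-\hip(\gamma)}\,\yy_\hhs^\gamma\,\yy_\hts^\gamma,
\]
i.e.\ the entire claim is a calculation of the $q$-power picked up when commuting these two monomials past one another.

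Next I would invoke \eqref{eqn:qalg.comm} repeatedly. First I would note that the monomials $\yy_\hts^\gamma$ and $\yy_\hhs^\gamma$ are each well-defined, independent of the order of their factors: two vertices in $\hts$ cannot be joined by an arrow (if horizontally adjacent both would need to be sources in the same row, if vertically adjacent both would need to be sinks in the same column), so \eqref{eqn:lambda} gives $\lambda(\epsilon_u,\epsilon_{u'})=0$ for $u,u'\in\hts$; the analogous argument handles $\hhs$. Applying \eqref{eqn:qalg.comm} across the product then yields
\[
\yy_\hts^\gamma\,\yy_\hhs^\gamma = q^{E}\,\yy_\hhs^\gamma\,\yy_\hts^\gamma, \qquad E = \sum_{u\in\hts,\; v\in\hhs}\gamma(u)\,\gamma(v)\,\lambda(\epsilon_u,\epsilon_v).
\]

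Finally I would evaluate $E$ by a brief case analysis. For $u\in\hts$ and $v\in\hhs$, any horizontal arrow between them must exit $u$ (since $u$ is a horizontal source) and enter $v$ (since $v$ is a horizontal sink), i.e.\ it points $u\to v$; any vertical arrow between them must exit $v$ (since $v$ is a vertical source) and enter $u$ (since $u$ is a vertical sink), i.e.\ it points $v\to u$. Applying \eqref{eqn:lambda}, each horizontal arrow contributes $+\gamma(t(c))\gamma(h(c))$ and each vertical arrow contributes $-\gamma(t(c))\gamma(h(c))$. Summing over all arrows and recognizing the resulting totals as $\vip(\gamma)$ (sum over horizontal arrows) and $\hip(\gamma)$ (sum over vertical arrows) gives $E=\vip(\gamma)-\hip(\gamma)$, completing the proof.

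There is no genuine obstacle here; the only thing to be careful about is bookkeeping the sign of $\lambda(\epsilon_u,\epsilon_v)$ correctly in the horizontal-versus-vertical split, and confirming that no arrow of $Q$ directly connects two vertices of the same type so that the two monomials are well-defined.
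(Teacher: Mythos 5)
Your proof is correct and is essentially the paper's own argument: both reduce to $\vhs=\hts$, $\hhs=\vts$ and then compute the commutation exponent via \eqref{eqn:qalg.comm} and \eqref{eqn:lambda}, the only difference being that you tally contributions arrow-by-arrow while the paper tallies them vertex-by-vertex over $(i,j)\in\hhs$, with the same identification of the total as $\vip(\gamma)-\hip(\gamma)$. Your added check that no arrow joins two vertices of $\hts$ (or of $\hhs$), so the monomials $\yy_\hhs^\gamma$, $\yy_\hts^\gamma$ are well defined, is a sound (and implicitly assumed) point.
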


\begin{proof}
We have that $\vhs = \hts$ and $\hhs = \vts$, so we need to commute the two $\yy$ products. Throughout the proof, let $y_{i,j} = y_{\epsilon_{i,j}}$. Consider a vertex $(i,j)$ which is a horizontal head. It appears in the quiver as below:
\begin{center}
\begin{tikzpicture}[->,semithick,auto]
\node (ij) at (0,0) {$(i,j)$};
\node (ij-1) at (-1.7,0) {$(i,j-1)$};
\node (i-1j) at (0,1) {$(i-1,j)$};
\node (i+1j) at (0,-1) {$(i+1,j)$};
\node (ij+1) at (1.7,0) {$(i,j+1)$};

\path
(ij) edge (i-1j)
(ij) edge (i+1j)
(ij-1) edge (ij)
(ij+1) edge (ij);
\end{tikzpicture}
\end{center}
Each vertex above which is not $(i,j)$ lies in the set $\vhs=\hts$, and these are the only vertices $(k,l)$ such that $y_{i,j}$ and $y_{k,l}$ do not commute. We need to compute the number $p_{ij}$ obtained by the following commutation operation
\[
y_{i,j-1}^{\gamma(i,j-1)} y_{i,j+1}^{\gamma(i,j+1)} y_{i-1,j}^{\gamma(i-1,j)} y_{i+1,j}^{\gamma(i+1,j)} y_{i,j}^{\gamma(i,j)} = q^{p_{ij}}\, y_{i,j}^{\gamma(i,j)} y_{i,j-1}^{\gamma(i,j-1)} y_{i,j+1}^{\gamma(i,j+1)} y_{i-1,j}^{\gamma(i-1,j)} y_{i+1,j}^{\gamma(i+1,j)}
\]
which is given by
\[
\begin{aligned}
p_{ij} = & \lambda(\epsilon_{i,j-1},\epsilon_{i,j})\gamma(i,j-1)\gamma(i,j) +
			\lambda(\epsilon_{i,j+1},\epsilon_{i,j})\gamma(i,j+1)\gamma(i,j) \\
	 	& + \lambda(\epsilon_{i-1,j},\epsilon_{i,j})\gamma(i-1,j)\gamma(i,j) +
			\lambda(\epsilon_{i+1,j},\epsilon_{i,j})\gamma(i+1,j)\gamma(i,j) \\
	=	& \gamma(i,j-1)\gamma(i,j) + \gamma(i,j+1)\gamma(i,j)
		 - \gamma(i-1,j)\gamma(i,j) - \gamma(i+1,j)\gamma(i,j).
\end{aligned}
\]
Observe that the result above is the sum of $\gamma(t(c))\gamma(h(c))$ for each horizontal arrow $c$ incident with $(i,j)$ \emph{minus} the sum of $\gamma(t(c))\gamma(h(c))$ for each vertical arrow $c$ incident with $(i,j)$. Performing this computation for each $(i,j)\in\hhs$ gives the result since $\vip(\gamma)$ and $\hip(\gamma)$ are respectively the sums of $\gamma(t(c))\gamma(h(c))$ over \emph{all} horizontal and vertical arrows.
\end{proof}

We are now ready to prove the main theorem, that is, to verify the equality \eqref{eqn:mt}.

\begin{proof}[Proof of Theorem \ref{thm:mt}]
We will work with the left-hand side of Equation \eqref{eqn:mt} which corresponds to the horizontal strata; the computations on the right-hand side are analogous, but done with vertical strata.

Fix a dimension vector $\gamma$ and consider the coefficient of $y_\gamma$ on the left-hand side of \eqref{eqn:mt}. Observe that by Equation \eqref{eqn:E.defn.poincare} this is given by
\begin{equation}
\label{eqn:orig.LHS}
\sum_{\bfs{m}^\bullet\kp \gamma} \left[ (-1)^{\sum_{\phi\in\hpr}m_\phi}\, q^{\frac{1}{2}\sum_{\phi\in\hpr}m_\phi^2}\,\left( \prod_{\phi\in\hpr}\cP_{m_\phi} \right) \left( y_{\phi_1}^{m_{\phi_1}} y_{\phi_2}^{m_{\phi_2}} \cdots y_{\phi_a}^{m_{\phi_a}} \right) \right]
\end{equation}
where the sum is over all horizontal Kostant series $\bfs{m}^\bullet$ of $\gamma$, and $\phi_1\prec\phi_2\prec \cdots \prec\phi_a$ is the ordering from \eqref{eqn:hor.ord}. Now fix a particular Kostant series $\bfs{m}^\bullet$, write $\eta = \eta(\bfs{m}^\bullet)$ for the corresponding horizontal stratum, and analyze the term
\[
(-1)^{\sum_{\phi\in\hpr}m_\phi}\, q^{\frac{1}{2}\sum_{\phi\in\hpr}m_\phi^2}\,\left( \prod_{\phi\in\hpr}\cP_{m_\phi} \right) \left( y_{\phi_1}^{m_{\phi_1}} y_{\phi_2}^{m_{\phi_2}} \cdots y_{\phi_a}^{m_{\phi_a}} \right)
\]
{which, by Definition \ref{defn:Poincare.series.eta}, is further equal to}
\[
 (-1)^{\sum_{\phi\in\hpr}m_\phi}\, q^{\frac{1}{2}\sum_{\phi\in\hpr}m_\phi^2}\,\cP_\eta\, \left( y_{\phi_1}^{m_{\phi_1}} y_{\phi_2}^{m_{\phi_2}} \cdots y_{\phi_a}^{m_{\phi_a}} \right)
\]
{and using the result of Proposition \ref{prop:w.qalg} becomes}
\[
 (-1)^{\sum_{\phi\in\hpr}m_\phi}\, q^{\frac{1}{2}\sum_{\phi\in\hpr}m_\phi^2 + \dd(\gamma)+\w(\eta)}\,\cP_\eta\,\left(\left[\prod_{\sigma\in\hpri{1}} y_\sigma^{m_\sigma}\right] \cdots \left[\prod_{\sigma\in\hpri{n}} y_\sigma^{m_\sigma}\right] \right).
\]
Next, we apply Proposition \ref{prop:rr.qalg.codim} to $\prod_{\sigma\in\hpri{i}}y_\sigma^{m_\sigma}$ for each $i$ to obtain a product of the form
\begin{equation}
\label{eqn:s.prime.p.prime}
(-1)^{s} \, q^{p} \, \cP_\eta \, \left(\yy_{\hhs(1)}^\gamma\,\yy_{\hts(1)}^\gamma\right) \cdots \left(\yy_{\hhs(n)}^\gamma \, \yy_{\hts(n)}^\gamma\right).
\end{equation}
We recall Equation \eqref{eqn:s_i} to see that $s$ is given by
\begin{align*}
s &= \sum_{\phi\in\hpr}m_\phi + \sum_{i\in[n]} s_i \\
	& = \sum_{\phi\in\hpr}m_\phi + \sum_{i\in[n]} \left(\sum_{\phi\in\hpri{i}} m_\phi \left(\sum_{j\in[n^\prime]} d^j_{\phi} - 1\right) \right) \\
	& = \sum_{i\in[n]}\sum_{\phi\in\hpri{i}} m_\phi\,d^j_\phi = \sum_{i\in[n],j\in[n^\prime]}\gamma(i,j),
\end{align*}
where the numbers $d^j_\phi$ are defined as at the beginning of Section \ref{ss:count.codim.qalg} and the last equality comes from Equation \eqref{eqn:dj.m.gamma}. Hence $s$ depends \emph{only} on $\gamma$, and not on the Kostant series $\bfs{m}^\bullet$. The importance of this fact is that we can factor the sign $(-1)^s$ out of the summation over $\bfs{m}^\bullet \kp \gamma$ in Equation \eqref{eqn:orig.LHS}. To determine the value of $p$ in Equation \eqref{eqn:s.prime.p.prime} we refer to Equation \eqref{eqn:p_i} to obtain
\begin{align*}
p & = \frac{1}{2}\sum_{\phi\in\hpr} m_\phi^2 + \dd(\gamma)+\w(\eta) + \sum_{i\in[n]} p_i \\
	& = \dd(\gamma)+\w(\eta) + \frac{1}{2}\sum_{i\in[n]}\sum_{j\in[n^\prime]}\gamma(i,j)^2 + \sum_{i\in[n]}\codim_\C(\Omega_{\bfs{m}^i}(Q(i,\bullet));\M_\gamma(i,\bullet)) \\
\intertext{which by Proposition \ref{prop:codim.eta} is equal to}
	&= \dd(\gamma)+\w(\eta) + \frac{1}{2}\sum_{i\in[n]}\sum_{j\in[n^\prime]}\gamma(i,j)^2 + \codim_\C(\eta;\M_\gamma).
\end{align*}
Since Kostant series correspond to horizontal strata, the expression in \eqref{eqn:orig.LHS} is really a sum over $\eta \in \Hor(\M_\gamma)$ and can now be rewritten as
\begin{multline}
(-1)^s \, q^{\dd(\gamma) + \frac{1}{2}\sum_{i\in[n]}\sum_{j\in[n^\prime]} \gamma(i,j)^2} \, \times \\
\left(\yy_{\hhs(1)}^\gamma\,\yy_{\hts(1)}^\gamma\right) \cdots \left(\yy_{\hhs(n)}^\gamma \, \yy_{\hts(n)}^\gamma\right)  \sum_{\eta\in\Hor(\M_\gamma)}q^{\w(\eta)+\codim_\C(\eta;\M_\gamma)}\,\cP_\eta.
\nonumber
\end{multline}
After applying Proposition \ref{prop:full.hhs.times.hts}, we obtain that this is further equal to
\begin{align}
 &(-1)^s \, q^{\dd(\gamma) + \mathbf{up}(\gamma) + \frac{1}{2}\sum_{i\in[n]}\sum_{j\in[n^\prime]} \gamma(i,j)^2} \,
\yy_{\hhs}^\gamma\,\yy_{\hts}^\gamma \sum_{\eta\in\Hor(\M_\gamma)}q^{\w(\eta)+\codim_\C(\eta;\M_\gamma)}\,\cP_\eta
\nonumber
\\
\label{eqn:final.LHS}
= & (-1)^s \, q^{ - \hip(\gamma) + \frac{1}{2}\sum_{i\in[n]}\sum_{j\in[n^\prime]} \gamma(i,j)^2} \,
\yy_{\hhs}^\gamma\,\yy_{\hts}^\gamma \sum_{\eta\in\Hor(\M_\gamma)}q^{\w(\eta)+\codim_\C(\eta;\M_\gamma)}\,\cP_\eta.
\end{align}
The analogous computations on the right-hand side of Equation \eqref{eqn:mt} produce
\[
(-1)^s \, q^{ - \vip(\gamma) + \frac{1}{2}\sum_{i\in[n]}\sum_{j\in[n^\prime]} \gamma(i,j)^2} \,
\yy_{\vhs}^\gamma\,\yy_{\vts}^\gamma \sum_{\theta\in\Ver(\M_\gamma)}q^{\w(\theta)+\codim_\C(\theta;\M_\gamma)}\,\cP_\theta.
\]
which by Lemma \ref{lem:switch.HH.HT} becomes
\begin{equation}
\label{eqn:final.RHS}
(-1)^s \, q^{ - \hip(\gamma) + \frac{1}{2}\sum_{i\in[n]}\sum_{j\in[n^\prime]} \gamma(i,j)^2} \,
\yy_{\hhs}^\gamma\,\yy_{\hts}^\gamma \sum_{\theta\in\Ver(\M_\gamma)}q^{\w(\theta)+\codim_\C(\theta;\M_\gamma)}\,\cP_\theta.
\end{equation}
Finally, Corollary \ref{cor:poinc.h.equals.poinc.v} guarantees that \eqref{eqn:final.LHS} and \eqref{eqn:final.RHS} are equal. Since this must hold for every dimension vector $\gamma$, we have established the equality in \eqref{eqn:mt}.
\end{proof}

\bibliographystyle{amsalpha}
\bibliography{jmabib}

\end{document}